\providecommand{\U}[1]{\protect\rule{.1in}{.1in}}
\newtheorem{theorem}{Theorem}
\newtheorem{proposition}{Proposition}
\newtheorem{lemma}[proposition]{Lemma}
\newtheorem{corollary}[proposition]{Corollary}
\theoremstyle{definition}
\newtheorem{definition}[proposition]{Definition}
\theoremstyle{remark}
\newtheorem{remark}{Remark}
\newcommand{\commentaire}[1]{}
\newcommand{\N}{\mathbb N}
\newcommand{\Q}{\mathbb Q}
\newcommand{\R}{\mathbb R}
\newcommand{\Z}{\mathbb Z}
\newcommand{\T}{\mathbb T}
\newcommand{\fff}{\rightarrow}
\newcommand{\SL}{\operatorname{SL}}
\newcommand{\sgn}{\operatorname{sgn}}
\begin{document}
\title{Dynamic of generalized transvections}
\author{Guido Ahumada, Nicolas Chevallier}
\maketitle
\begin{abstract}
Given an increasing odd homeomorphism $\sigma:\R\fff\R$,  the two bijective maps $h_{\sigma},v_{\sigma}:\R^2\fff \R^2$ defined by 
\[
h_{\sigma}(x,y)=(x+\sigma^{-1}(y),y) \text{ and } v_{\sigma}(x,y)=(x,\sigma(x)+y).
\]
are called generalized transvections.
We study  the action on the plane of the group $\Gamma(\sigma)$  generated by these two maps. Particularly interesting cases arise when $\sigma(x)=\sgn(x)|x|^{\alpha}$. We prove that most points have dense orbits and that every nonzero point has a dense orbit when $\sigma(x)=\sgn(x)|x|^{2}$. We also look at  invariant measures and thanks to Nogueira's work about $\SL(2,\Z)$-invariant measure, we can determine these measures when $\sigma$ is linear in a neighborhood of the origin.
\end{abstract}
\section{Introduction}
We are going to study some algebraic and dynamic properties of the group of homeomorphisms generated by two {\it generalized transvections} acting on the plane $\R^2$. Given an increasing odd homeomorphism $\sigma:\R\fff\R$,  these generalized transvections are two bijective maps $h_{\sigma},v_{\sigma}:\R^2\fff \R^2$ defined by 
\[
h_{\sigma}(x,y)=(x+\sigma^{-1}(y),y) \text{ and } v_{\sigma}(x,y)=(x,\sigma(x)+y).
\]
and $\Gamma(\sigma)$ is the group generated by these two maps. 
 
Let us look at some particular cases. When $\sigma$ is the identity map, $\Gamma(\sigma)=\SL(2,\Z)$.  When $\sigma(x)=\sgn(x)|x|^\alpha$ for some positive real number $\alpha$, the maps $h_{\sigma}$ and $v_{\sigma}$ are the simplest ``skew'' homeomorphisms that commute
with the flow induced by the vector field $X(x,y)=(x,\alpha y)$. These maps  also act on the real
weighted projective space defined by the curves $x\fff(x,\lambda \sigma(x))$,
$\lambda\in\R\cup\{\infty\}$. When $\sigma(x+1)=\sigma(x)+1$, $\sigma$ restricted to $\Z$ is the identity, therefore there is no difference in the actions of $\Gamma(\sigma)$ and $\SL(2,\Z)$ on $\Z^2$. Furthermore, using Fubini theorem we see that the Lebesgue measure is invariant by both maps $h_{\sigma}$ and $v_{\sigma}$,
so, in many perspectives, $\Gamma(\sigma)$ can be seen as a perturbed version of $\SL(2,\Z)$ and it is natural to study  how  the properties of $\SL(2,\Z)$ are transformed by such nonlinear  perturbations.   

On the algebraic side, the main question is to find the generators and relations presentation of the group $\Gamma(\sigma)$, in particular is it possible that $\Gamma(\sigma)$ is a free group? We prove only partial results, see section \ref{sec:algebraic}.

Before taking an interest in the dynamical side, let us introduce the {\it generalized} Euclidean algorithm, it is the map $E_{\sigma}$ defined by 
\[
E_{\sigma}:(x,y)\in\R_{\geq 0}^2\fff
\left\{
\begin{array}
[l]{l}
(x-\sigma^{-1}(y),y) \text{ if } y<\sigma(x) \\
(x,y-\sigma(x)) \text{\hspace{0.4cm} if } y\geq \sigma(x)
\end{array}
\right.
\]
(we shall define the Euclidean algorithm on the whole plane, see section \ref{sec:paradox}). When $\sigma$ is the identity, $E=E_{\sigma}$ is the classical Euclidean algorithm map. In that case, it is known that a point  in the plane  has a dense orbit under the natural action of $\SL(2,\Z)$ if and only if it does not belong to a rational line through the origin. It is also known that 
\[
\bigcup_{n=0}^{\infty}\bigcup_{k=0}^{\infty}E^{-k}(\{E^n(\{p\})\})=\R_{\geq 0}^2\cap \SL(2,\Z)p
\]
for all $p\in\R_{\geq 0}^2$ (see \cite{No1}). This latter fact together with the ergodicity of the action of $\SL(2,\Z)$ on $\R^2$, implies the ergodicity of the Euclidean algorithm with respect to the Lebesgue measure. 
We shall prove the following results.
\begin{theorem}\label{thm:density}
Let $\sigma:\R\rightarrow \R$ be an increasing odd homeomorphism. Then all points in $\R^2$ that do not belong to a $\sigma$-rational line have dense orbits in $\R^2$ under the action of $\Gamma(\sigma)$. 
\end{theorem}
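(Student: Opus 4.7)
The plan is to adapt the classical density proof for $\SL(2,\Z)$-orbits to the nonlinear setting via the generalized Euclidean algorithm $E_\sigma$, extended to all of $\R^2$ in Section \ref{sec:paradox}. At each step, $E_\sigma$ coincides with one of $h_\sigma^{\pm 1}$ or $v_\sigma^{\pm 1}$ (according to the quadrant and which coordinate dominates in the $\sigma$-sense), so the forward $E_\sigma$-orbit of $p$ is automatically contained in $\Gamma(\sigma)\cdot p$.

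I would first show that when $p$ does not lie on any $\sigma$-rational line, the sequence $E_\sigma^n(p)$ is defined for all $n$ and tends to the origin. Termination of the algorithm would correspond to one coordinate becoming zero, i.e.\ the point landing on a coordinate axis (a prototypical $\sigma$-rational line); non-termination is thus equivalent to $p$ not lying on any $\sigma$-rational line. Convergence $E_\sigma^n(p)\to 0$ then follows from a generalized continued-fraction estimate, each step of $E_\sigma$ strictly decreasing one coordinate by the $\sigma$- or $\sigma^{-1}$-value of the other.

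Next, from orbit points $(x_n,y_n)\to 0$ with $x_n,y_n\neq 0$, I would deduce that the orbit closure $X$ contains both coordinate axes. For any $a\in\R$, the choice $j_n=\lfloor(a-x_n)/\sigma^{-1}(y_n)\rfloor$ gives an orbit element $h_\sigma^{j_n}(x_n,y_n)=(x_n+j_n\sigma^{-1}(y_n),y_n)$ whose first coordinate lies within $\sigma^{-1}(y_n)\to 0$ of $a$ and whose second coordinate is $y_n\to 0$, hence $(a,0)\in X$. By the symmetric argument, the $y$-axis is also contained in $X$.

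The final and main obstacle is to upgrade $X\supset(\R\times\{0\})\cup(\{0\}\times\R)$ to $X=\R^2$. Since $X$ is $\Gamma(\sigma)$-invariant, it contains every curve $c\mapsto\gamma(c,0)$ for $\gamma\in\Gamma(\sigma)$, and the task is to show this countable union of continuous curves is dense in $\R^2$. A naive two-step word $h_\sigma^{j}v_\sigma^{k}$ applied to $(c,0)$ leaves an error of size $\sigma^{-1}(b)$ or $\sigma(a)$ in one coordinate that does not vanish when the target $(a,b)$ is fixed. I would instead argue inductively using longer compositions $h_\sigma^{j_m}v_\sigma^{k_m}\cdots h_\sigma^{j_1}v_\sigma^{k_1}$ together with the continuous freedom in the starting parameter $c\in\R$, which should provide enough flexibility to correct the residual error at each stage in a continued-fraction-style construction. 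For $\sigma=\operatorname{id}$ this reduces to the classical density of rational lines through the origin; for a general $\sigma$, the nonlinear nature of these generalized ``rational lines'' is exactly what makes this step the most delicate point of the argument.
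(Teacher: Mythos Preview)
Your first two steps are essentially the paper's: the iterates of the Euclidean algorithm tend to $0$ on $\sigma$-irrational points (the paper makes this quantitative via the accelerated map $U_\sigma=F_\sigma^2$, Lemma~\ref{lem:1/2}), and applying powers of $h_\sigma$ (or $v_\sigma$) to such a sequence sweeps out a half-axis (Lemma~\ref{lem:density1}). So far so good.

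The gap is exactly the step you flag as ``the most delicate point'': showing that the curves $\gamma(Ox)$, $\gamma\in\Gamma(\sigma)$ (equivalently the $\sigma$-rational lines) are dense in $\R^2$. Your proposed inductive correction scheme with words $h_\sigma^{j_m}v_\sigma^{k_m}\cdots$ and a free parameter $c$ is not a proof, and it is not clear how to make it one: at each stage you gain one free integer but also introduce a new nonlinear coupling between the coordinates, and there is no evident monotone quantity forcing the residual error to shrink. For general $\sigma$ the curves $w(Ox)$ are genuinely nonlinear and there is no homogeneity to exploit, so the classical ``density of rational slopes'' argument does not transfer.

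The paper's idea here (Proposition~\ref{prop:guido}) is different and is the one you are missing: it is a measure-theoretic contradiction. If some bounded connected open set $V$ avoids all $\sigma$-rational lines, then $V$ lies entirely inside a single cylinder $w(X)$ of the monoid partition at every depth (because the boundaries of those cylinders are exactly the $\sigma$-rational lines). Hence $U_\sigma^n$ acts on $V$ as a single word $w^{-1}$ from $\mathcal M_\sigma(X)$, which preserves Lebesgue measure; but by Lemma~\ref{lem:1/2}, $U_\sigma^n(V)$ is contained in an arbitrarily small ball around the origin for $n$ large, forcing $\lambda(V)$ to be arbitrarily small, a contradiction. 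This argument uses only that $h_\sigma,v_\sigma$ preserve Lebesgue measure and that the accelerated algorithm contracts---no delicate control of individual orbits is needed.
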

See section \ref{sec:density}, 
the definition of $\sigma$-rational lines. We shall also give a version of this Theorem in dimensions $\geq 3$ (see Theorems \ref{thm:densityn1} and \ref{thm:densityn2}).  So at first sight, the general case is similar to the classical case, however,
\begin{theorem}\label{thm:density2}
If $\sigma:x\in\R\fff \sgn(x)x^2$, then every non zero point in $\R^2$ has a dense orbit under the action of  $\Gamma(\sigma)$. Although,  $\bigcup_{n=0}^{\infty}\bigcup_{k=0}^{\infty}E_{\sigma}^{-k}(\{E_{\sigma}^n(1,0)\})$ is a discrete subset of $\R_{\geq 0}^2$.
\end{theorem}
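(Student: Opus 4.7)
The statement has two halves, a density claim and a discreteness claim, which are treated separately.

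\textbf{Discreteness.}  A direct check shows $E_\sigma(1,0)=(1,0)$, because $0<\sigma(1)=1$ forces the first branch of $E_\sigma$ and $\sigma^{-1}(0)=0$. Hence $E_\sigma^n(1,0)=(1,0)$ for every $n\ge 0$, and the set in question reduces to $S:=\bigcup_{k\ge 0}E_\sigma^{-k}(\{(1,0)\})$. Solving both branches of $E_\sigma(x,y)=(x_0,y_0)$ for $(x_0,y_0)\in\R_{>0}^2$ gives exactly two preimages, namely $(x_0+\sqrt{y_0},y_0)$ and $(x_0,y_0+x_0^2)$; at the root $(1,0)$ the first branch is a self-loop, so after one step $S$ enters the quadrant $\{x\ge 1,\,y\ge 1\}$ and never leaves it. On that quadrant each preimage operation increases $x+y$ by at least $1$ (since $\sqrt{y}\ge 1$ and $x^2\ge 1$), hence every vertex of $S$ at tree-depth $k$ satisfies $x+y\ge k$. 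Combined with the trivial bound $2^k$ on the number of vertices at depth $k$, this shows that $S\cap\{x+y\le M\}$ is finite for every $M$, so $S$ is a discrete subset of $\R_{\ge 0}^2$.

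\textbf{Density, reduction.}  The specific form $\sigma(x)=\sgn(x)x^2$ supplies two symmetries absent from the general setting. Because $\sigma$ is homogeneous of degree $2$, a direct computation gives $T_\lambda h_\sigma=h_\sigma T_\lambda$ and $T_\lambda v_\sigma=v_\sigma T_\lambda$ for the dilations $T_\lambda(x,y):=(\lambda x,\lambda^2 y)$, $\lambda>0$; thus $\{T_\lambda\}_{\lambda>0}$ centralises $\Gamma(\sigma)$ and carries orbits to orbits. Because $\sigma$ is also odd, the antipodal involution $(x,y)\mapsto(-x,-y)$ likewise centralises $\Gamma(\sigma)$. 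Together these symmetries act transitively on each coordinate half-axis, so the claim for every point lying on a $\Gamma(\sigma)$-translate of a coordinate axis reduces to the density of the two orbits $\Gamma(\sigma)\cdot(1,0)$ and $\Gamma(\sigma)\cdot(0,1)$; all remaining points are handled directly by Theorem~\ref{thm:density}.

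\textbf{Density, the hard step.}  By the symmetric roles of the two generators only $\Gamma(\sigma)\cdot(1,0)$ need be treated. The plan is to produce one element of this orbit that does \emph{not} lie on any $\sigma$-rational line; Theorem~\ref{thm:density} then forces its orbit, and hence $\Gamma(\sigma)\cdot(1,0)$, to be dense. Iterating the generators yields candidates such as $h_\sigma v_\sigma^n(1,0)=(1+\sqrt n,\,n)$ and, after longer words, points whose coordinates involve nested quadratic irrationalities. The main obstacle is to verify that for some word the resulting coordinates are algebraically generic enough to escape the countable, algebraically defined family of $\sigma$-rational lines from Section~\ref{sec:density}. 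This step is where the square law is essential: in the linear case the orbit of $(1,0)$ is the set of primitive integer vectors and never leaves the rational lines, so any proof must genuinely exploit the fact that $h_\sigma$ extracts square roots of coordinates whenever $y\ne 0$ is not already a perfect square in the relevant field.
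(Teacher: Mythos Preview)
Your architecture is exactly the paper's: prove discreteness directly, reduce density to Theorem~\ref{thm:density} via the dilation symmetry $T_\lambda$, and for the single remaining case exhibit an element of $\Gamma(\sigma)\cdot(1,0)$ that is $\sigma$-irrational. Your discreteness argument is complete and correct (the paper in fact does not spell this part out), and your reduction is the same as the paper's.

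The gap is in the ``hard step''. You correctly identify \emph{what} must be done---produce a word $w$ such that $w(1,0)$ is $\sigma$-irrational---but you stop at a description of the difficulty and a list of candidate points. That is not a proof: the $\sigma$-rational lines are defined dynamically, not by an explicit algebraic condition, so ``algebraically generic enough'' has no immediate meaning, and your candidates $(1+\sqrt n,\,n)$ are not shown to work. This is precisely the content the paper supplies. The paper exhibits the concrete point
\[
M_0=(-1+\sqrt2,\,2)=h_\sigma\circ v_\sigma^{2}\circ h_\sigma^{-1}\circ v_\sigma^{4}(1,0)
\]
and proves it is $\sigma$-irrational by running the accelerated Euclidean algorithm on $M_0$ and showing inductively that the iterates $(x_n,y_n)$ never hit an axis. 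The mechanism is field-theoretic: one maintains a tower $K_0=\Q\subset K_1=K_0(\sqrt{y_0})\subset\cdots\subset K_{n+1}=K_n(\sqrt{y_n})$ and proves, using a short lemma that $x+y\sqrt z$ with $x\le 0$, $y\ne 0$ can never be a square in $K(\sqrt z)$, that each $y_{n+1}$ fails to be a square in $K_{n+1}$. This forces $y_{n+1}\ne 0$ and $x_{n+1}\ne 0$ at every stage. Without an argument of this kind (or a substitute), the density half of your proof is incomplete.
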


A paradoxical partition of $\Omega=\R^2\setminus\{(0,0)\}$ is naturally associated with the two maps $h_{\sigma}$ and $v_{\sigma}$. Let
\begin{align*}
X&=\{(x,y)\in\Omega:xy\geq 0 \text{ and } x\neq 0\}\\
Y&=\{(x,y)\in\Omega:xy\leq 0 \text{ and } y\neq 0\}.
\end{align*}
The disjoint sets $A=h_{\sigma}(X)$, $B=v_{\sigma}(X)$, $C=h^{-1}_{\sigma}(Y)$ and $D=v^{-1}_{\sigma}(Y)$ satisfy the paradoxical properties
\begin{align*}
\Omega&=A\cup B\cup C\cup D,\\
\Omega&=h^{-1}_{\sigma}(A)\cup h_{\sigma}(C)= v^{-1}_{\sigma}(B)\cup v_{\sigma}(D).
\end{align*}
This implies that there is no finitely additive probability measure defined on the Borel subsets of $\Omega$ and invariant by $h_{\sigma}$ and $v_{\sigma}$. Moreover, this nonexistence can be made quantitative: for any finitely additive probability  measure $\mu$, there exists a Borel set $F\subset\Omega$ such that 
\[
|\mu(h_{\sigma}(F))-\mu(F)| \text{ or } |\mu(v_{\sigma}(F))-\mu(F)|\geq \frac14
\]
 (see \cite{Bu}, \cite{Sh} and section \ref{sec:paradox}).
Concerning infinite $\sigma$-additive measures, Dani proved that up to a multiplicative constant, the Lebesgue measure is the only locally finite invariant measure that does not charge the axes (\cite{Da1}). Nogueira gave a direct proof of this result not using homogeneous dynamic (\cite{No2}) and also described the ergodic measures that charge the axes. Nogueira's direct proof naturally leads to the following result. 

\begin{theorem}\label{thm:uniqueness}
Let $\sigma:\R\rightarrow \R$ be an increasing odd homeomorphism. Suppose $\sigma(x)=ax$ for all $x\in[-b,b]$ for some $a,b>0$. 
Then the Lebesgue measure is the only measure $\mu$ on $\R^2_{\geq 0}$ such that 
\begin{itemize}
\item $\mu([0,1]^2)=1$ and $\mu(\{0\}\times \R_{\geq 0})=\mu(\R_{\geq 0}\times\{0\})=0$,
\item for all Borel subsets $A$ in $\R^2_{\geq 0}$, 
$
\mu(A)=\mu(h_{\sigma}(A))=\mu(v_{\sigma}(A))
$.
\end{itemize}
\end{theorem}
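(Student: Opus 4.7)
The plan is to reduce to Nogueira's classical uniqueness theorem (\cite{No2}) by exploiting the local linearity of $\sigma$. In a small square around the origin the maps $h_{\sigma}$ and $v_{\sigma}$ act linearly and, after a measure-preserving change of variables, become the standard generators of $\SL(2,\Z)$; combined with the fact that the generalized Euclidean algorithm $E_{\sigma}$ eventually transports almost every non-axial point into this square, this should pin $\mu$ down.

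First I would shrink $b$ if necessary so that the square $L = [0,b]^2$ lies in the region where $\sigma(x)=ax$ and $\sigma^{-1}(y)=y/a$ hold for all arguments that arise, and so that each branch of $E_{\sigma}$ sends $L$ into itself (each branch fixes one coordinate and strictly decreases the other). On $L$ one has $h_{\sigma}(x,y)=(x+y/a,y)$ and $v_{\sigma}(x,y)=(x,ax+y)$; the Lebesgue-preserving change of variables $\Phi(x,y)=(x\sqrt{a},y/\sqrt{a})$ conjugates these to the standard transvections $(u,v)\mapsto (u+v,v)$ and $(u,v)\mapsto (u,u+v)$ and conjugates $E_{\sigma}|_L$ to the classical Euclidean algorithm $E$ on $L'=\Phi(L)$. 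Hence $\nu := \Phi_{*}(\mu|_L)$ is a locally finite $E$-invariant measure on $L'$ which does not charge the axes, and Nogueira's uniqueness theorem gives $\nu = c\cdot\mathrm{Leb}|_{L'}$ for some $c>0$, whence $\mu|_L = c\cdot\mathrm{Leb}|_L$.

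To propagate this to all of $\R_{\geq 0}^2$, I would use the symbolic partition induced by $E_{\sigma}$. For almost every $p\in\R_{>0}^2$ the forward orbit $(E_{\sigma}^n(p))_{n\geq 0}$ is infinite (the exceptional set of $\sigma$-rational points has Lebesgue measure zero) and converges to the origin, so it eventually enters $L$. Letting $n(p)$ denote the first entrance time and $\gamma_p\in\Gamma(\sigma)$ the product of $h_{\sigma}$ and $v_{\sigma}$ read off from the branches of $E_{\sigma}$ used along the way, one has $p=\gamma_p(E_{\sigma}^{n(p)}(p))$. Grouping points by their initial symbolic sequence gives, modulo a null set, a countable disjoint decomposition of $\R_{>0}^2$ into $\Gamma(\sigma)$-translates of Borel subsets of $L$. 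Since $\mu$ and Lebesgue are both $\Gamma(\sigma)$-invariant and coincide up to the factor $c$ on $L$, they coincide up to $c$ on $\R_{>0}^2$; neither charges the axes, so $\mu = c\cdot\mathrm{Leb}$ on $\R_{\geq 0}^2$, and the normalization $\mu([0,1]^2)=1$ forces $c=1$.

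The main obstacle is the application of Nogueira's theorem in the middle step: \cite{No2} is stated globally, while our $\nu$ lives only on the bounded region $L'$. The cleanest fix is probably to first extend $\nu$ from $L'$ to an $E$-invariant locally finite measure on all of $\R_{\geq 0}^2$ by spreading it along the $\SL(2,\Z)$-action (using the linear conjugation beyond $L'$ where it still makes sense symbolically), and then to invoke Nogueira verbatim; alternatively, one can check that the proof in \cite{No2} is local enough that only $E$-preimages of cells near the origin are used, in which case it adapts directly. The symbolic decomposition in the final step is essentially bookkeeping, modulo verifying measurability and the fact that the $E_{\sigma}$-orbit of almost every point of $\R_{>0}^2$ enters $L$.
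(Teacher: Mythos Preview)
Your proposal is correct and follows essentially the same route as the paper: reduce to the standard transvections on a small square via a linear change of variables, invoke a localized form of Nogueira's theorem there, and then propagate via the Euclidean algorithm. The paper packages both the localization of Nogueira (your ``main obstacle'') and the propagation step into a single abstract extension lemma (Lemma~\ref{lem:extension}), whose existence part yields Corollary~\ref{cor:nogueira} and whose uniqueness part replaces your symbolic decomposition, but the content is the same.
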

Actually, we use a local version of Nogueira's theorem, see section \ref{sec:nogueira}.

The above Theorem has an easy consequence.
\begin{corollary}\label{cor:ergodic}
With the same hypothesis as in Theorem \ref{thm:uniqueness}, the generalized Euclidean algorithm is ergodic on $\R^2_{\geq 0}$ equipped with the Lebesgue measure.
\end{corollary}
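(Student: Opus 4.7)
The plan is to reduce ergodicity of $E_{\sigma}$ to Theorem~\ref{thm:uniqueness} by associating to every $E_{\sigma}$-invariant set a locally finite $h_{\sigma}, v_{\sigma}$-invariant measure on $\R^{2}_{\geq 0}$. First, observe that $E_{\sigma}$ preserves Lebesgue measure $\lambda$ on $\R^{2}_{\geq 0}$: on $\{y<\sigma(x)\}$ it is the restriction of $h_{\sigma}^{-1}$, and on $\{y\geq\sigma(x)\}$ the restriction of $v_{\sigma}^{-1}$, and both $h_{\sigma}$ and $v_{\sigma}$ preserve $\lambda$ as already recorded in the introduction.

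Let $A\subset\R^{2}_{\geq 0}$ be an $E_{\sigma}$-invariant Borel set, i.e.\ $E_{\sigma}^{-1}(A)=A$ modulo $\lambda$-null sets. Unpacking the piecewise definition of $E_{\sigma}$ and using that $h_{\sigma}(\R^{2}_{\geq 0})\subset\{y\leq\sigma(x)\}$ while $v_{\sigma}(\R^{2}_{\geq 0})\subset\{y\geq\sigma(x)\}$, I would rewrite the invariance as the pair of modulo-$\lambda$-null identities
\[
A\cap\{y<\sigma(x)\}=h_{\sigma}(A)\quad\text{and}\quad A\cap\{y\geq\sigma(x)\}=v_{\sigma}(A).
\]
The sub-measure $\mu_{A}(B):=\lambda(A\cap B)$, defined for Borel $B\subset\R^{2}_{\geq 0}$, then inherits $h_{\sigma}$- and $v_{\sigma}$-invariance:
\[
\mu_{A}(h_{\sigma}(B))=\lambda(A\cap h_{\sigma}(B))=\lambda(h_{\sigma}(A)\cap h_{\sigma}(B))=\lambda(A\cap B)=\mu_{A}(B),
\]
using the first identity (plus $h_{\sigma}(B)\subset\{y\leq\sigma(x)\}$) and the Lebesgue-invariance of $h_{\sigma}$, with a symmetric computation for $v_{\sigma}$. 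Since $\mu_{A}$ trivially gives zero mass to the coordinate axes, every hypothesis of Theorem~\ref{thm:uniqueness} is in place except possibly the normalization.

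The finishing move is to run the same construction on $A^{c}=\R^{2}_{\geq 0}\setminus A$, also $E_{\sigma}$-invariant, producing $\mu_{A^{c}}$ of the same type. Since $\mu_{A}([0,1]^{2})+\mu_{A^{c}}([0,1]^{2})=1$, at least one of the two has positive mass on $[0,1]^{2}$; after rescaling, Theorem~\ref{thm:uniqueness} forces it to coincide with $\lambda$. If $\mu_{A}=c\lambda$ with $c>0$, the relation $\mu_{A}(A^{c})=\lambda(A\cap A^{c})=0$ forces $\lambda(A^{c})=0$; in the symmetric case one gets $\lambda(A)=0$. In either case $A$ is trivial modulo $\lambda$, which is exactly ergodicity.

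The one step that demands some care is the rewriting of $E_{\sigma}^{-1}(A)=A$ into the two set-identities for $h_{\sigma}(A)$ and $v_{\sigma}(A)$; it rests on the fact that the curve $\{y=\sigma(x)\}$ is $\lambda$-null and that $h_{\sigma}$ is a Lebesgue-preserving bijection from $\R^{2}_{\geq 0}$ onto $\{0\leq y\leq\sigma(x),\ x\geq 0\}$, with an analogous statement for $v_{\sigma}$. Once these identities are in place, the uniqueness result of Theorem~\ref{thm:uniqueness} does essentially all the work.
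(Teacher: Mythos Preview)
Your argument is correct and follows essentially the same route as the paper: the paper packages your inline verification that $\mu_A=1_A\lambda$ is $\{h_\sigma,v_\sigma\}$-invariant into a separate lemma (Lemma~\ref{lem:invariant}) and then invokes Theorem~\ref{thm:uniqueness} exactly as you do. Your treatment of the normalization via the complement $A^c$ is a minor cosmetic variant of the paper's direct appeal to proportionality.
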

Miernowski and Nogueira showed that the classical Euclidean algorithm is not only ergodic but also exact, \cite{MiNo}. Using their result we obtain
\begin{corollary} \label{cor:exact}
With the same hypothesis as in Theorem \ref{thm:uniqueness}, the generalized Euclidean algorithm is exact on $\R^2_{\geq 0}$ equipped with the Lebesgue measure.
\end{corollary}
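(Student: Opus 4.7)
The plan is to transfer the Miernowski--Nogueira exactness of the classical Euclidean algorithm $E$ to $E_{\sigma}$, by exploiting the fact that $E_{\sigma}$ coincides with a linear rescaling of $E$ on a neighborhood of the origin.

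I will choose $c>0$ with $c\leq b$ and $ac\leq b$, and set $W=[0,c]^2$. A direct inspection of the two branches of $E_{\sigma}$ on $W$ shows that $W$ is forward invariant under $E_{\sigma}$; indeed, on $W$ both $\sigma$ and $\sigma^{-1}$ are linear with slopes $a$ and $1/a$, so $E_{\sigma}$ acts on $W$ either by $(x,y)\mapsto(x,y-ax)$ or by $(x,y)\mapsto(x-y/a,y)$, each of which keeps the coordinates in $[0,c]$. The affine map $\phi(x,y)=(x,y/a)$ is then a measurable isomorphism from $W$ onto $\phi(W)=[0,c]\times[0,c/a]$ rescaling Lebesgue by $1/a$, and a one-line computation shows $\phi\circ E_{\sigma}=E\circ\phi$ on $W$; this identifies $E_{\sigma}|_W$ with $E|_{\phi(W)}$ as measure-theoretic dynamical systems.

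Next I will invoke the Miernowski--Nogueira exactness of $E$ on $\R_{\geq 0}^2$. Since $\phi(W)$ is forward invariant under $E$, the restriction $E|_{\phi(W)}$ inherits exactness: forward invariance gives $(E|_{\phi(W)})^{-n}(B)=E^{-n}(B)\cap\phi(W)$ for every Borel $B\subset\phi(W)$, so the tail $\sigma$-algebra of $E|_{\phi(W)}$ is the restriction to $\phi(W)$ of the (trivial) tail of $E$. Transported through $\phi$ this yields exactness of $E_{\sigma}|_W$.

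Finally I will propagate exactness from $W$ to all of $\R_{\geq 0}^2$. Corollary \ref{cor:ergodic} gives ergodicity of $E_{\sigma}$, so, since $W$ has positive Lebesgue measure, almost every orbit eventually enters $W$; equivalently, $\bigcup_{n\geq 0}E_{\sigma}^{-n}(W)=\R_{\geq 0}^2$ modulo a null set. For any tail event $A$ of $E_{\sigma}$, write $A=E_{\sigma}^{-n}(A_n)$; forward invariance of $W$ gives $A\cap W=(E_{\sigma}|_W)^{-n}(A_n\cap W)$ for every $n$, so $A\cap W$ is a tail event of $E_{\sigma}|_W$ and is therefore null or conull in $W$. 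The same holds for each $A_n$, which is itself a tail event since tail events are $E_{\sigma}$-saturated and $E_{\sigma}$ is measure-preserving; a first-entry-time argument, applied simultaneously to the family $(A_n)_n$, then forces $A$ to be null or conull in $\R_{\geq 0}^2$. I expect the main obstacle to be precisely this last step, because Lebesgue measure is infinite and one must check that the ``null versus conull in $W$'' alternatives for the $A_n\cap W$ are mutually consistent across $n$; this consistency is the technical heart of the propagation from $W$ to $\R_{\geq 0}^2$.
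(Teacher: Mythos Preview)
Your route is different from the paper's and carries more overhead than necessary. The paper does not work directly with the tail $\sigma$-algebra. Instead it first reduces to $a=1$ by the linear change of variables $(x,y)\mapsto(ax,y)$, and then invokes the characterisation from Lemma~2.1 of \cite{MiNo}: once ergodicity is known (Corollary~\ref{cor:ergodic}), exactness is equivalent to the statement that for every Borel set $B$ of positive measure there exists $k$ with $\lambda(E_{\sigma}^k(B)\cap E_{\sigma}^{k+1}(B))>0$. Given any such $B$, Lemma~\ref{lem:1/2} produces $n$ and a positive-measure $A\subset B$ with $E_{\sigma}^n(A)\subset S(b)$; on $S(b)$ one has $E_{\sigma}=E$, and the known exactness of $E$ supplies the required $k$. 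This sidesteps entirely the two technical steps you isolate, namely (i) restricting exactness to a forward-invariant rectangle, and (ii) propagating it back to the whole quadrant.

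Your outline is salvageable, but two points deserve attention. First, the appeal to Corollary~\ref{cor:ergodic} for ``almost every orbit enters $W$'' is not legitimate in infinite measure: ergodicity of a dissipative infinite-measure-preserving map does not by itself force almost-sure hitting of a given positive-measure set. What does the job here is Lemma~\ref{lem:1/2} (together with the fact that the $\sigma$-rational lines form a null set), which shows directly that a.e.\ orbit converges to the origin and hence enters $W$. Second, and this is exactly the obstacle you flag, your tail-event argument in the last paragraph is incomplete: you need that the $A_n=E_{\sigma}^n(A)$ are themselves tail sets and that the null/conull alternatives for the $A_n\cap W$ match up; this is true but requires the identity $A=E_{\sigma}^{-n}E_{\sigma}^n(A)$ for tail sets and a careful first-entrance decomposition. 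The paper's use of the \cite{MiNo} characterisation buys a much shorter proof precisely because it replaces this tail-$\sigma$-algebra bookkeeping by a single positive-measure statement that transfers trivially through forward images. (A minor slip: the constraint you need on $c$ is $c\le b$ and $c\le ab$, so that both $\sigma$ and $\sigma^{-1}$ are linear on $[0,c]$; you wrote $ac\le b$.)
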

As a last remark, it is well known that the classical Euclidean algorithm is totally dissipative, and it is straightforward to check that the generalized versions are still totally dissipative.

The paper is organized as follows. We begin with some additional remarks   and then with some basic facts about the partitions generated by generalized transvections and the non-existence of finite invariance measures. Next in section \ref{sec:algebraic}, we present a few algebraic results. Section \ref{sec:density} is devoted to the proof of Theorem \ref{thm:density} about the density of orbits.
In this section, proposition \ref{prop:guido} on the density of rational lines is the main intermediate result towards Theorem \ref{thm:density}. The accelerated Euclidean algorithm is also an important tool in the proof of Theorem \ref{thm:density} and we explain how it can be used to define a $\sigma$-continued fraction expansion.  Results in higher dimensions are given in section \ref{sec:high}. Section \ref{sec:density2} is devoted to the proof of Theorem \ref{thm:density2}.
Section \ref{sec:abstract} gives an abstract lemma about extension of invariant measure. In Section \ref{sec:nogueira}, we use Nogueira's theorem and the previous lemma to derive Theorem \ref{thm:uniqueness}.
In section \ref{sec:lines}, we study the lines defined by the generalized transvections in connection with the Euclidean algorithm. A construction of invariant measures for the generalized Euclidean algorithm is outlined in Section \ref{sec:invariant}. 
In section \ref{sec:kazdhan}, we consider an affine extension of the group $\Gamma(\sigma)$ and explain that this group satisfies a relative Kazdhan property. In section \ref{sec:torus}, we consider the case of generalized transvections acting on the two dimensional torus, Proposition \ref{prop:torus} generalizes Burger's result about ergodic $\SL(2,\Z)$-invariant measures on the torus (\cite{Bu}, Proposition 9).
The paper ends with some questions.

Sections \ref{sec:density},   \ref{sec:density2} and  \ref{sec:nogueira} devoted to the proof of Theorems \ref{thm:density}, \ref{thm:density2} and  \ref{thm:uniqueness} are the heart of our work.
They used only sections \ref{sec:paradox} and \ref{sec:abstract}. 
 
\section{some additional remarks about the case $\sigma(x)=\sgn(x)|x|^{\alpha}$}\label{sec:context}
Fix a positive real number $\alpha$. Consider the homeomorphism $\sigma:\R\fff\R$ defined by  $\sigma(x)=\sgn(x)|x|^{\alpha}$ and the flow $\Phi_{\alpha}:(t,x,y)\in\R\times\R^2\fff(e^tx,e^{\alpha t}y)\in\R^2$ associated with  the vector field $X(x,y)=(x,\alpha y)$.   

Call $G(\alpha)$ the group of homeomorphisms $g:\R^2\fff\R^2$ that commute with $\Phi_{\alpha}$, i.e., for all $p\in\R^2$ and all $t\in\R$,
\[g\circ\Phi_{\alpha}(t,p)=\Phi_{\alpha}(t,g(p)).
\]  
\begin{enumerate}
\item It is well known that if $g\in G(1)$ is differentiable  then $g$ is linear. Therefore $\SL(2,\R)$ is the subgroup of $G(1)$ made of differentiable homeomorphisms that preserve the Lebesgue measure. 
 
\item The map $f:\R^2\fff\R^2$ defined  by $f(x,y)=(x,\sigma^{-1}(y))$   intertwines the flows $\Phi_1$ and $\Phi_{\alpha}$: for all $p\in\R^2$ and all $t\in\R$,
\[f\circ\Phi_{\alpha}(t,p)=\Phi_{1}(t,f(p)).
\]
 Thus the map $F_{\alpha}:g\in G(\alpha)\fff f\circ g\circ f^{-1}\in G(1)$ is an isomorphism, and $F_{\alpha}(\Gamma(\sigma))$ is a subgroup of $G(1)$ isomorphic to $\Gamma(\sigma)$, the elements of which preserve the image of the Lebesgue measure by $f$. 
\item Let  $+_{\sigma}$ be the addition defined on $\R$ by $x+_{\sigma}x'=\sigma^{-1}(\sigma(x)+\sigma(x'))$. Observe that $(\R,+_{\sigma},\times)$ is still a field and that 
\begin{align*}
&f\circ h_{\sigma}\circ f^{-1}(x,y)=(x+y,y)\\
&f\circ v_{\sigma}\circ f^{-1}(x,y)=(x,y+_{\sigma}x),
\end{align*} 
so that, with new coordinates, $h_{\sigma}$ and $v_{\sigma}$ becomes standart transvections  associated with  different additions. 
\end{enumerate}
 
\section{The paradoxical decomposition and the Euclidean algorithm} \label{sec:paradox}
Given an increasing odd homeomorphism $\sigma :\R\fff\R$, the generalized Euclidean algorithm is defined on the whole $\Omega=\R^2\setminus \{(0,0)\}$ by 
\begin{align*}
E_{\sigma}&:\Omega\fff\Omega\\
&:u\fff 
\left\{
\begin{array}[l]{lll}
h^{-1}_{\sigma}(u) &\text{ if }& u\in A=h_{\sigma}(X)\\
v^{-1}_{\sigma}(u) &\text{ if }& u\in B=v_{\sigma}(X)\\
h_{\sigma}(u)& \text{ if }& u\in C=h^{-1}_{\sigma}(Y)\\
v_{\sigma}(u) &\text{ if }& u\in D=v^{-1}_{\sigma}(Y)
\end{array}
\right.
\end{align*}
The sets $A$, $B$, $C$ and $D$ are very well adapted to show that there is no $\Gamma(\sigma)$-invariant probability measure.
Indeed, if $\mu$ is a finitely additive probability measure on $\Omega$, since $X=A\cup B$, and $Y=C\cup D$,
\begin{align*}
\begin{array}[c]{ll}
\mu(h^{-1}_{\sigma}(A))=\mu(A)+\mu(B),&\mu(h_{\sigma}(C))=\mu(C)+\mu(D), \\
\mu(v^{-1}_{\sigma}(B))=\mu(A)+\mu(B),&\mu(v_{\sigma}(D))=\mu(C)+\mu(D).\\
\end{array}
\end{align*}
Therefore, 
\begin{align*}
1=&\mu(h^{-1}_{\sigma}(A))-\mu(A)+\mu(v^{-1}_{\sigma}(B))-\mu(B)\\
&+\mu(h_{\sigma}(C))-\mu(C)+\mu(v_{\sigma}(D))-\mu(D)
\end{align*}
which implies that one of the four terms of the right hand side is $\geq \frac14$. It follows that no finitely additive probability can be invariant by the group $\Gamma(\sigma)$.

\begin{lemma}\label{lem:free}
The monoid  $\mathcal M_{\sigma}(X)=\mathcal M(h_{\sigma},v_{\sigma})$ generated by $h_{\sigma}$ and $v_{\sigma}$, and the monoid $\mathcal M_{\sigma}(Y)=\mathcal M(h_{\sigma}^{-1},v_{\sigma}^{-1})$ generated by $h_{\sigma}^{-1}$ and $v_{\sigma}^{-1}$, are free.
\end{lemma}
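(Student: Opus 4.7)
The plan is to run a ping-pong argument on the sets $X,Y$ and the pieces $A,B,C,D$ of the paradoxical decomposition recalled just above. I will outline the argument for $\MM_\sigma(X)$; the proof for $\MM_\sigma(Y)$ is obtained by the symmetric substitution $X\leftrightarrow Y$, $h_\sigma\leftrightarrow h_\sigma^{-1}$, $v_\sigma\leftrightarrow v_\sigma^{-1}$, and $\{A,B\}\leftrightarrow\{C,D\}$.

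The key preliminary step is to check that $X$ is invariant under both generators, i.e.\ $h_\sigma(X),v_\sigma(X)\subseteq X$. This is a quick sign verification using that $\sigma$ is increasing and odd, performed separately on the two components of $X$ according to the sign of $x$ (e.g.\ for $x>0,y\geq 0$ both $\sigma^{-1}(y)\geq 0$ and $\sigma(x)>0$, so the closed first quadrant is preserved; the closed third quadrant branch is symmetric). In the notation of the paradoxical decomposition this reads $A\cup B\subseteq X$, and combining with the disjoint decompositions $\Omega=A\sqcup B\sqcup C\sqcup D=X\sqcup Y$ together with the analogous inclusion $C\cup D\subseteq Y$ yields the equality $A\sqcup B=X$. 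In particular $A$ and $B$ are disjoint, proper, nonempty subsets of $X$. A short induction on the length of a word $w=s_1\circ\cdots\circ s_n$ in $\{h_\sigma,v_\sigma\}$ then gives $w(X)\subseteq s_1(X)\in\{A,B\}$, because the inner block $s_2\circ\cdots\circ s_n$ already sends $X$ into $X$.

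Freeness is now the usual ping-pong conclusion. Given two words $w_1,w_2$ that define the same self-map of $\R^2$, I would peel off their longest common outer prefix, which is legitimate because $h_\sigma,v_\sigma$ are honest bijections and can be cancelled from the left as maps. This reduces matters to two cases: either $w_1,w_2$ begin with distinct outermost letters, or one of them is the empty word. In the first case, $w_1(X)$ and $w_2(X)$ lie in the two disjoint sets $A$ and $B$, so since $X\neq\emptyset$ the two maps cannot agree on any point of $X$. In the second case, $w_1(X)$ is contained in $A$ or $B$, a proper subset of $X$, so $w_1$ cannot coincide with the identity on $X$. I do not foresee any genuine obstacle; the only bookkeeping item is to keep track of which ping-pong pair ($\{A,B\}$ or $\{C,D\}$) corresponds to which monoid, but this is forced by the defining identities $A=h_\sigma(X)$, $B=v_\sigma(X)$, $C=h_\sigma^{-1}(Y)$, $D=v_\sigma^{-1}(Y)$ and the invariance of $Y$ under $h_\sigma^{-1},v_\sigma^{-1}$.
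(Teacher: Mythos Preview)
Your argument is correct and follows essentially the same ping-pong approach as the paper: both use that $A=h_\sigma(X)$ and $B=v_\sigma(X)$ form a partition of $X$, so that any nontrivial word sends $X$ into the proper subset determined by its leftmost letter. Your version is more explicit in two places---you verify the partition $X=A\sqcup B$ by a sign check rather than quoting it, and you spell out the prefix-peeling step that turns ``no nontrivial word is the identity'' into full freeness of the monoid---but the underlying idea is identical.
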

\begin{proof}
The two nonempty sets $A=h_{\sigma}(X)$ and $B=v_{\sigma}(X)$ form a partition of $X$. 
It follows that for any nontrivial word $w=w_1\circ w_2\dots \circ w_n$ with $w_i=h_{\sigma}$ or $v_{\sigma}$ for each $i$, $w(X)$ is included in $A$ or $B$ and never equal to $X$. Therefore $w\neq Id$ and the monoid $\mathcal M(h_{\sigma},v_{\sigma})$ is free. 
Replacing $A$ and $B$ by $C$ and $D$, we see that the monoid $\mathcal M(h_{\sigma}^{-1},v_{\sigma}^{-1})$ is free as well.
\end{proof}
An easy induction starting with the partition $X=h_{\sigma}(X)\cup v_{\sigma}(X)$ leads to
\begin{lemma}
For each positive integer $n$ we have the partition
\[
X=\bigcup_{w\in \mathcal M_{\sigma,n}(X)} w(X)
\] 
where $\mathcal M_{\sigma,n}(X)$ is the set of words of length $n$  in $\mathcal M_{\sigma}(X)$. Similar partition of $Y$ are given by the words of length $n$ in the monoid $\mathcal M_{\sigma}(Y)$.
\end{lemma}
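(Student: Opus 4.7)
The plan is an induction on $n$ whose base case $X = h_\sigma(X) \sqcup v_\sigma(X)$, i.e.\ $X = A \sqcup B$, already appears in the proof of Lemma~\ref{lem:free}. This base partition is where all the geometric content lies; everything beyond is formal manipulation using that $h_\sigma$ and $v_\sigma$ are bijections and that the monoid $\mathcal M_\sigma(X)$ is free.

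For the induction step, assume $X = \bigsqcup_{w \in \mathcal M_{\sigma,n}(X)} w(X)$ is a disjoint union. Every word $w' \in \mathcal M_{\sigma,n+1}(X)$ factors uniquely as $w' = u \circ w$ with $u \in \{h_\sigma, v_\sigma\}$ and $w \in \mathcal M_{\sigma,n}(X)$; the freeness result in Lemma~\ref{lem:free} ensures that distinct such factorizations give distinct monoid elements, so indexing the union over length-$(n+1)$ words is unambiguous. Since $u$ is a bijection, applying it to the inductive hypothesis yields
\[
u(X) = \bigsqcup_{w \in \mathcal M_{\sigma,n}(X)} (u\circ w)(X).
\]
Taking the union over $u \in \{h_\sigma, v_\sigma\}$ and invoking the base partition $X = h_\sigma(X) \sqcup v_\sigma(X)$ gives
\[
X = \bigsqcup_{w' \in \mathcal M_{\sigma,n+1}(X)} w'(X).
\]
Pairwise disjointness is immediate: two cells whose first letters differ sit in the disjoint sets $A$ and $B$, while two cells $u(w(X))$ and $u(w'(X))$ sharing the same first letter $u$ are disjoint by injectivity of $u$ and the inductive hypothesis applied to $w \ne w'$.

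The partition of $Y$ follows by repeating the argument verbatim with $X, h_\sigma, v_\sigma, A, B$ replaced by $Y, h_\sigma^{-1}, v_\sigma^{-1}, C, D$; the required base case $Y = C \sqcup D$ is likewise recorded in the proof of Lemma~\ref{lem:free}. There is no real obstacle: this lemma is a routine bookkeeping consequence of the free monoid structure, and the only nontrivial input---that the first refinement step already yields a partition---has been supplied.
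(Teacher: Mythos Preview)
Your proof is correct and follows exactly the approach the paper indicates: the paper's entire proof is the single phrase ``An easy induction starting with the partition $X=h_{\sigma}(X)\cup v_{\sigma}(X)$,'' and you have simply written out that induction in detail. There is nothing to add.
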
  

\begin{corollary} \label{cor:euclide}
For each $p\in X$ and for each positive integer $n$, there exists a unique word $w_n\in \mathcal M_{\sigma}(X)$ such that $p\in w(X)$. Moreover, $E_{\sigma}^n(p)=w_n^{-1}(p)$. 
A similar result holds for points in $Y$. 
\end{corollary}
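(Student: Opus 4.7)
Uniqueness of $w_n$ is immediate from the preceding lemma: the sets $\{w(X) : w \in \mathcal M_{\sigma,n}(X)\}$ partition $X$, so $p$ lies in exactly one of them. The real content is therefore the identity $E_{\sigma}^n(p) = w_n^{-1}(p)$, which I would prove by induction on $n$. The statement for points of $Y$ is entirely symmetric, obtained by replacing $A,B$ with $C,D$ and $h_{\sigma}, v_{\sigma}$ with $h_{\sigma}^{-1}, v_{\sigma}^{-1}$, so I would spell out only the $X$ side.

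The base case $n=1$ is just the piecewise definition of $E_{\sigma}$ on $X = A \cup B$: if $p \in A = h_{\sigma}(X)$ then $w_1 = h_{\sigma}$ and $E_{\sigma}(p) = h_{\sigma}^{-1}(p) = w_1^{-1}(p)$, and symmetrically if $p \in B = v_{\sigma}(X)$. I note in passing that this also shows $E_{\sigma}(X) \subset X$, so all iterates remain in $X$ and the induction may proceed inside $X$.

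For the inductive step, let $w_{n+1}\in\mathcal M_{\sigma,n+1}(X)$ be the unique word with $p \in w_{n+1}(X)$, and split it as $w_{n+1} = w_n \circ u$ with $|w_n|=n$ and $u \in \{h_{\sigma}, v_{\sigma}\}$. The level-$(n+1)$ partition refines the level-$n$ partition, because $u(X) \subset X$ gives $w_{n+1}(X) = w_n(u(X)) \subset w_n(X)$; by uniqueness of the length-$n$ cell containing $p$, this prefix $w_n$ coincides with the word supplied by the induction hypothesis, so $q := E_{\sigma}^n(p) = w_n^{-1}(p)$. Since $w_n$ is a bijection and $p \in w_n(u(X))$, we have $q \in u(X)$, and the base case applied to $q$ produces $E_{\sigma}(q) = u^{-1}(q)$; composing, $E_{\sigma}^{n+1}(p) = u^{-1} \circ w_n^{-1}(p) = w_{n+1}^{-1}(p)$, as wanted.

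The only point that needs attention is the alignment between the piecewise definition of $E_{\sigma}$ and the nested partitions produced by the preceding lemma; once that observation is in place, the induction is essentially bookkeeping and I do not anticipate any genuine obstacle.
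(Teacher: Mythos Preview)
Your argument is correct and is exactly the natural elaboration of what the paper leaves implicit: the paper states this corollary without proof, treating it as an immediate consequence of the level-$n$ partition lemma, and your induction is precisely the routine verification behind that claim. There is nothing to compare; you have supplied the omitted details in the expected way.
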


\begin{corollary}
For each $p\in X$  there exists a unique infinite word $w=w_1w_2\dots w_n\dots$ written with the alphabet $\{h_{\sigma},\,v_{\sigma}\}$ such that 
\[
p\in \bigcap_{n\geq 1}w_1\circ w_2\dots\circ w_n(X).
\]  
\end{corollary}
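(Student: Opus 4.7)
The statement is essentially a compactness-type consequence of the preceding corollary together with the prefix-consistency of the partitions. My plan is the following.

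First, for each positive integer $n$, the preceding lemma gives the partition $X=\bigcup_{w\in \MM_{\sigma,n}(X)}w(X)$ (a \emph{disjoint} union, thanks to freeness established in Lemma~\ref{lem:free}), so there is a unique word $u^{(n)}=u_1^{(n)}\circ u_2^{(n)}\circ\cdots\circ u_n^{(n)}$ of length $n$ in $\MM_{\sigma}(X)$ with $p\in u^{(n)}(X)$. This is exactly Corollary~\ref{cor:euclide} applied at each level.

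Next, I would show prefix consistency: for every $n$, the first $n$ letters of $u^{(n+1)}$ coincide with $u^{(n)}$. Indeed, since $u_{n+1}^{(n+1)}(X)$ equals either $A=h_{\sigma}(X)$ or $B=v_{\sigma}(X)$, both contained in $X$, one has the inclusion
\[
u^{(n+1)}(X) \;=\; u_1^{(n+1)}\circ\cdots\circ u_n^{(n+1)}\bigl(u_{n+1}^{(n+1)}(X)\bigr)\;\subset\; u_1^{(n+1)}\circ\cdots\circ u_n^{(n+1)}(X).
\]
Hence $p$ belongs to the set on the right; by the uniqueness statement at length $n$, the length-$n$ prefix of $u^{(n+1)}$ must equal $u^{(n)}$. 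Thus the letters stabilize: defining $w_k:=u_k^{(k)}$, we have $w_k=u_k^{(n)}$ for every $n\geq k$, so $p\in w_1\circ\cdots\circ w_n(X)$ for every $n$, which places $p$ in the claimed intersection.

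For uniqueness of the infinite word, suppose $w'_1 w'_2\dots$ is another infinite word with $p\in \bigcap_{n\geq 1} w'_1\circ\cdots\circ w'_n(X)$. Then for every $n$, the length-$n$ word $w'_1\cdots w'_n$ lies in $\MM_{\sigma,n}(X)$ and satisfies $p\in (w'_1\cdots w'_n)(X)$, so by uniqueness at level $n$ it coincides with $u^{(n)}$, which forces $w'_k=w_k$ for all $k$. The only real ``step'' here is the prefix-consistency inclusion above; everything else is bookkeeping on top of the preceding lemma and corollary, so I expect no genuine obstacle.
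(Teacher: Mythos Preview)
Your argument is correct and is precisely the natural coherence argument the paper has in mind: the corollary is stated without proof there, as an immediate consequence of the level-$n$ partitions and the uniqueness in Corollary~\ref{cor:euclide}. The prefix-consistency step you spell out is exactly what makes the passage from finite to infinite words work, so nothing is missing.
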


\section{Algebraic properties of $\Gamma(\sigma)$}\label{sec:algebraic}

In the previous section we observed that the monoid   $\mathcal M_{\sigma}(X)=\mathcal M(h_{\sigma},v_{\sigma})$ is free (see Lemma \ref{lem:free}).  
Now we are interested in the algebraic structure of the group $\Gamma(\sigma)$ generated by $h_{\sigma}$ and $v_{\sigma}$. This structure is more difficult to apprehend than for the monoid. We have only two partial results. 
 
Let denote $\Gamma_2(\sigma)$ the group generated by 
$h_{\sigma}^2$ et $v_{\sigma}^2$.

\begin{proposition}
$\Gamma_{2}(\sigma)$ is a free group
\end{proposition}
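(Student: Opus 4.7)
The plan is to invoke the Klein ping-pong lemma for free products with the two infinite cyclic subgroups $\Gamma_1=\langle h_{\sigma}^2\rangle$ and $\Gamma_2=\langle v_{\sigma}^2\rangle$. Each is indeed infinite: from $h_{\sigma}^{2n}(x,y)=(x+2n\sigma^{-1}(y),y)$ and the analogous $v_{\sigma}^{2n}(x,y)=(x,y+2n\sigma(x))$, one sees $h_{\sigma}^{2n}\neq\mathrm{id}$ and $v_{\sigma}^{2n}\neq\mathrm{id}$ as soon as $n\ne 0$ (evaluate on any point with $y\ne 0$, resp.\ $x\ne 0$). It is therefore enough to exhibit two disjoint nonempty subsets $U,V\subset\R^2$ satisfying $h_{\sigma}^{2n}(V)\subset U$ and $v_{\sigma}^{2n}(U)\subset V$ for every $n\ne 0$: ping-pong will then yield $\Gamma_2(\sigma)\cong\Z*\Z$, which is free of rank~$2$.

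The choice of regions has to be adapted to the $\sigma$-geometry rather than to the Euclidean norm. I would take the two ``$\sigma$-sectors''
\[
U=\{(x,y)\in\R^2:|y|<\sigma(|x|)\},\qquad V=\{(x,y)\in\R^2:|y|>\sigma(|x|)\},
\]
separated by the graphs $y=\pm\sigma(x)$. Since $\sigma$ is odd one has $|\sigma(x)|=\sigma(|x|)$ and $|\sigma^{-1}(y)|=\sigma^{-1}(|y|)$; in particular $(x,y)\in V$ is equivalent to $|\sigma^{-1}(y)|>|x|$ and $(x,y)\in U$ is equivalent to $|\sigma(x)|>|y|$, which is exactly the format in which $h_{\sigma}^{2n}$ and $v_{\sigma}^{2n}$ act.

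The two inclusions then reduce to one triangle-inequality computation. For $(x,y)\in V$ and $n\ne 0$,
\[
|x+2n\sigma^{-1}(y)|\geq 2|n||\sigma^{-1}(y)|-|x|>(2|n|-1)|\sigma^{-1}(y)|\geq|\sigma^{-1}(y)|,
\]
whence $\sigma(|x+2n\sigma^{-1}(y)|)>|y|$, i.e.\ $h_{\sigma}^{2n}(x,y)\in U$. The inclusion $v_{\sigma}^{2n}(U)\subset V$ is obtained in strictly identical fashion by interchanging the roles of $x$ and $y$ and of $\sigma$ and $\sigma^{-1}$.

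I do not expect any serious obstacle. The two design choices that do the work are (i) measuring horizontality versus verticality through $\sigma$ rather than through the Euclidean norm, which makes $U$ and $V$ transform correctly under $h_{\sigma}$ and $v_{\sigma}$, and (ii) squaring the generators, which provides precisely the margin $2|n|-1\geq 1$ that turns the relevant inequalities strict for every non-zero exponent. Without the squaring one only obtains $|n|-1\geq 0$, so the corresponding escape property for $\Gamma(\sigma)$ itself fails by one unit, which is consistent with the fact that $\Gamma(\sigma)$ is not known (and, in the classical case $\sigma=\mathrm{id}$, is actually \emph{not}) free.
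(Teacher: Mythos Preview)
Your proof is correct and follows essentially the same ping-pong approach as the paper: your $\sigma$-sectors $U=\{|y|<\sigma(|x|)\}$ and $V=\{|y|>\sigma(|x|)\}$ coincide, up to boundary curves, with the paper's sets $P=A\cup C$ and $Q=B\cup D$, and the paper likewise verifies $h_{\sigma}^{2l}(Q)\subset P$ and $v_{\sigma}^{2k}(P)\subset Q$ for all nonzero $k,l$. Your presentation is in fact more explicit, since you spell out the triangle-inequality estimate that makes the inclusion work, whereas the paper simply asserts it is ``easy to check''.
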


\begin{proof}

 The nonempty subsets $P=C\cup A$ and $Q=B\cup D$
in $\R^{2}$ are disjoint and it is easy to check that $v_{\sigma}^{2k}(P)\subset Q$  and
$h_{\sigma}^{2l}(Q)\subset P$ for all nonzero integers $k$ and $l$. Making use of the ping-pong Lemma 
we see obtain that a non trivial product of power of  $h_{\sigma}^2$ and $v_{\sigma}^2$
cannot be the identity map.
\end{proof}

\begin{remark}
The groups $\Gamma(\sigma)$ and $\Gamma(\sigma^{-1})$ are conjugate : 
$S^{-1}\circ h_{\sigma}\circ S:(x,y)\rightarrow(x+\sigma(x),y)$ and $S^{-1}\circ v_{\sigma}\circ
S:(x,y)\rightarrow(x,y+\sigma^{-1}(x))$.
where $S:(x,y)\rightarrow(y,x)$
\end{remark}
\begin{remark}
If $\sigma=a\,Id$ for some $a$, then $\Gamma
(\sigma)$ and $\Gamma(Id)$ are conjugate subgroups of the linear group.
\end{remark}

\begin{remark}
The central symmetry $(x,y)\rightarrow-(x,y)$ commute with all the elements in
$\Gamma(\sigma)$ because the homeomorphism $\sigma$ is odd.
\end{remark}

\begin{proposition}
Suppose that $\sigma(x)=x$ for all $x$ in a neighborhood of $0$ or that $\sigma(x)=x$ for all $x\in \Z$.
\begin{enumerate}
\item Then there is a surjective morphism $\pi:\Gamma(\sigma)\fff\Gamma(Id)$.

\item  The kernel of $\pi$ is the invariant subgroup generated by  $V_{\sigma}^{2}U_{\sigma}^{3}$ and $V_{\sigma}^{4}$
where $U_{\sigma
}=v_{\sigma}^{-1}h_{\sigma}$ and $V_{\sigma}=h_{\sigma}^{-1}v_{\sigma}h_{\sigma}^{-1}$. 
\end{enumerate}
\end{proposition}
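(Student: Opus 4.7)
The plan is to build $\pi$ by sending the two generators $h_\sigma, v_\sigma$ to their linear analogues $h=h_{Id}$ and $v=v_{Id}$ in $\Gamma(Id)=\SL(2,\Z)$, using one of two independent constructions according to the hypothesis on $\sigma$. Under the first hypothesis, $\sigma(x)=x$ on some $[-b,b]$ makes both $h_\sigma$ and $v_\sigma$ differentiable at the common fixed point $(0,0)$, with differentials equal to the linear maps $h$ and $v$; since every element of $\Gamma(\sigma)$ fixes the origin and the chain rule propagates differentiability through any finite composition, the assignment $g\mapsto Dg(0,0)$ defines a homomorphism from $\Gamma(\sigma)$ to $GL(2,\R)$ whose image is the subgroup generated by $h$ and $v$, namely $\SL(2,\Z)$. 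Under the second hypothesis, $\sigma|_\Z=Id$ forces $h_\sigma$ and $v_\sigma$ to restrict to $h$ and $v$ on $\Z^2$, so every $g\in\Gamma(\sigma)$ permutes $\Z^2$ in exactly the same way as the corresponding word in $\SL(2,\Z)$; faithfulness of the $\SL(2,\Z)$-action on $\Z^2$ then makes $\pi(g)$ well-defined. Surjectivity is automatic since $h$ and $v$ generate $\SL(2,\Z)$.

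For (2), the first step is to verify by a direct matrix computation that $U=v^{-1}h$ has order $6$ with $U^3=-I$, and that $V=h^{-1}vh^{-1}$ has order $4$ with $V^2=-I$; consequently $V^4=I$ and $V^2U^3=I$ both hold in $\SL(2,\Z)$. The classical amalgamated-product decomposition $\SL(2,\Z)\cong \Z/4 *_{\Z/2}\Z/6$, with the central $\Z/2$ identified with $\{\pm I\}$, then yields the presentation $\SL(2,\Z)=\langle U,V\mid V^4,\,V^2U^3\rangle$. Since $\pi(U_\sigma)=U$ and $\pi(V_\sigma)=V$, both $V_\sigma^4$ and $V_\sigma^2U_\sigma^3$ lie in $\ker\pi$, and therefore the normal closure $N$ of $\{V_\sigma^4,\,V_\sigma^2U_\sigma^3\}$ in $\Gamma(\sigma)$ satisfies $N\subset\ker\pi$.

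The reverse inclusion exploits that $U_\sigma, V_\sigma$ also generate $\Gamma(\sigma)$ (indeed $h_\sigma=V_\sigma^{-1}U_\sigma^{-1}$ and $v_\sigma=V_\sigma^{-1}U_\sigma^{-2}$ by a short calculation). Given $g\in\ker\pi$, I write $g=w(U_\sigma,V_\sigma)$ for some word $w$ in the free group $F_2=\langle\alpha,\beta\rangle$. Then $w(U,V)=I$ in $\SL(2,\Z)$, and by the presentation just recalled, $w$ belongs to the normal closure of $\{\beta^4,\,\beta^2\alpha^3\}$ in $F_2$. Substituting $\alpha\mapsto U_\sigma$ and $\beta\mapsto V_\sigma$ expresses $g$ as a product of conjugates of $V_\sigma^{\pm 4}$ and $(V_\sigma^2U_\sigma^3)^{\pm 1}$, placing $g$ in $N$.

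The main obstacle is the well-definedness of $\pi$, since iterating $h_\sigma$ and $v_\sigma$ immediately leaves any neighborhood on which $\sigma$ is linear. The two constructions sidestep this by pinning the comparison with $\Gamma(Id)$ either at the fixed point $(0,0)$ (via derivatives) or on the globally preserved lattice $\Z^2$ (via restriction); once $\pi$ is in hand, the identification of $\ker\pi$ becomes a routine translation of the classical presentation of $\SL(2,\Z)$.
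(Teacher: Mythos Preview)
Your proof is correct and follows essentially the same route as the paper. The only cosmetic difference is in the first hypothesis: the paper argues directly that any word $w(h_\sigma,v_\sigma)$ coincides with $w(h,v)$ on a sufficiently small neighborhood of the origin (since each generator does, and compositions of maps fixing $0$ and linear near $0$ remain linear near $0$), whereas you package this as $\pi(g)=Dg(0,0)$ via the chain rule; these are the same observation in different clothing. For the second hypothesis and for part (2), your argument matches the paper's: restriction to $\Z^2$ for well-definedness, the identity $U_\sigma V_\sigma=h_\sigma^{-1}$ to see that $U_\sigma,V_\sigma$ generate $\Gamma(\sigma)$, and the presentation $\SL(2,\Z)=\langle U,V\mid V^2U^3,\,V^4\rangle$ to identify the kernel.
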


\begin{proof}

1. Let $\mathbb F(a,b)$ be the free group with two generators $a$ and $b$ and let $G$ be a group generated by two of its elements $x$ and $y$.
For $w\in\mathbb F(a,b)$ call $w(x,y)$  the image of $w$ by the unique morphism that sends $a$ on $x$ and $b$ on $y$. 

For all $g$  in $\Gamma(\sigma)$ there exists $w\in\mathbb F(a,b)$ such that $g=w(h_{\sigma},v_{\sigma})$. Set $\pi(g)=w(h,v)$ which is an element of $\langle h,v\rangle=\Gamma(Id)$.  In order to see that $\pi$ define a map from
$\Gamma(\sigma)$ to $\Gamma(1)$ it is enough to see that if $w$ and $w'$ are two elements in $\mathbb{F}(a,b)$ such that $w(h_{\sigma},v_{\sigma})=w'(h_{\sigma},v_{\sigma})$ then $w(h,v)=w'(h,v)$.  

If $\sigma(x)=x$ in a neighborhood of $0$, the maps
$w(h_{\sigma},v_{\sigma})$ and $w(h,v)$ are equal in a small enough
neighborhood $W$ of the origin and the same holds for $w'$ with another $W'$. Now $W\cap W'$ generate the vector space $\R^2$ hence the linear maps $w(h,v)$and $w'(h,v)$ are equal. 

If $\sigma(x)=x$ for all $x\in\Z$, then $h_{\sigma}=h$ and $v_{\sigma}=v$ on $\Z^2$. Therefore for all $w$, $w(h_{\sigma},v_{\sigma})=w(h,v)$ on $\Z^2$. Again, since $\Z^2$ generates the vector space $\R^2$, $w(h_{\sigma},v_{\sigma})=w'(h_{\sigma},v_{\sigma})$ implies $w(h,v)=w'(h,v)$.

The same way of reasoning shows that $\pi
(g^{-1})=\pi(g)^{-1}$ and $\pi(g_{1}g_{2})=\pi(g_{1})\pi(g_{2}).$\medskip

2. Since $U_{\sigma}V_{\sigma}=h_{\sigma}^{-1}$,  $U_{\sigma}$ and $V_{\sigma}$ still generate the
group $\Gamma(\sigma)$. Let denote $U=U_{\sigma}$ and $V=V_{\sigma}$  when $\sigma=Id$.  \newline
The group
$\Gamma(Id)=\SL(2,\mathbb{Z)}$ has the presentation $\langle U,V|V^{2}%
U^{3},V^{4}\rangle$ (see \cite{Se}), which means that the kernel of the morphism $w\in\mathbb F(a,b)\fff w(U,V)\in\Gamma(Id)$ is  the invariant subgroup $N$ generated by $a^3b^2$ and $b^4$.

Observe that the relation $\pi(w(U_{\sigma},V_{\sigma}))=w(U,V)$ still holds for all $w\in\mathbb F(a,b)$.

It follows that each element of the invariant subgroup of
$\Gamma(\sigma)$ generated by $V_{\sigma}^{2}U_{\sigma}^{3}$ and
$V_{\sigma}^{4}$ is in the kernel $\pi$ because it is equal to the identity map on a subset that generates the vector space $\R^2$.

Conversely, let $w$ be in $\mathbb F(a,b)$ and
 $g=w(U_{\sigma}V_{\sigma})$ be an element in $\Gamma(\sigma)$ such that $\pi(g)=Id$.  Since $Id=\pi(w(U_{\sigma},U_{\sigma}))=w(U,V)$, $w$ is in the normal subgroup $N$.
\end{proof}

\begin{remark}
In the case of $\SL_{2}(\Z)$,   $U=v^{-1}h$ has order $6$ and
$V=h^{-1}vh^{-1}$ has order $4$ and  $V^{2}=-I=U^{3}$.
Note that if $\sigma\neq Id$ the maps
$V_{\sigma}^{2}U_{\sigma}^{3}$ and $V_{\sigma}^{4}$ are not the identity map of the plan.

\end{remark}

\section{The accelerated Euclidean algorithm, proof of Theorem \ref{thm:density}}\label{sec:density}

There are two versions of the continued fraction algorithm, the subtractive algorithm and the multiplicative algorithm.  In the same way, there are two Euclidean algorithms. We have already defined the subtractive version (the slow version). 
The accelerated Euclidean algorithm $F_{\sigma}$ is defined on $X$  by\medskip\\
- $F_{\sigma}(p)=h_{\sigma}^{-n}(p)$ when $p\in h_{\sigma}(X)=A$ and $n$ is the smallest integer such that $h_{\sigma}^{-n}(p)\in B=v_{\sigma}(X)$ and,\\
-  $F_{\sigma}(p)=v_{\sigma}^{-n}(p)$ when $p\in v_{\sigma}(X)=B$ and $n$ is the smallest integer such that $v_{\sigma}^{-n}(p)\in A$.\medskip\\
The accelerated algorithm $F_{\sigma}$ is defined on $Y$ by similar formulas. 
Up to the axes and the diagonals, the map $F_{\sigma}$ interchange $A$ and $B$, and $C$ and $D$, while $U_{\sigma}=F_{\sigma}^2$ leaves $A$, $B$, $C$ and $D$ invariant.

\begin{definition}
The  $\sigma$-rational lines are the subsets $L$ in $\R^2$ of the following form:
\begin{itemize}
	\item  $L=Ox$ or $Oy$, one of the coordinate axes,
	\item  $L=m(h_{\sigma},v_{\sigma})(Ox)$ where $m(h_{\sigma},v_{\sigma})$ is an element of the monoid
	$\mathcal M_{\sigma}(X)=\mathcal M(h_{\sigma},v_{\sigma})$ generated by $h_{\sigma}$ and $v_{\sigma}$, 
	\item  $L=m(h_{\sigma}^{-1},v_{\sigma}^{-1})(Oy)$ where $m(h_{\sigma}^{-1},v_{\sigma}^{-1})$ is an element of the monoid
	$\mathcal M_{\sigma}(Y)=\mathcal M(h_{\sigma}^{-1},v_{\sigma}^{-1})$ generated by $h_{\sigma}^{-1}$ and $v_{\sigma}^{-1}$, 
\end{itemize}
A point in $\R^2$ is call $\sigma$-rational if it is in a $\sigma$-rational line and $\sigma$-irrational if it is not. 	
\end{definition}
\begin{remark} \label{rem:rational}
The set of $\sigma$-rational lines we defined is not invariant by the action of the group $\Gamma(\sigma)$ and we could have been defined the set of $\sigma$-rational lines as the set of lines $g(Ox)$, $g\in\Gamma(\sigma)$. This latter set is in general strictly larger than the set we defined and so Theorem \ref{thm:density} is stronger with our definition (see remark \ref{rem:rational2}).
\end{remark}
\begin{remark}
In the above definition, the axes $Ox$ and $Oy$ can be used indifferently because $v_{\sigma}(Ox)=h_{\sigma}(Oy)$. 
\end{remark}
\begin{remark}
Each rational line has zero Lebesgue measure because it is the image of a set of zero Lebesgue measure. And since there are only countably many $\sigma$-rational lines, the set of $\sigma$-rational points has zero Lebesgue measure. 
\end{remark}
Next Proposition is an important step toward Theorem \ref{thm:density}. This proposition is very well known in the classical case. In the general case its proof is not a direct adaptation of the classical case. 
The first step of this proof is the following Lemma which gives a rough estimate of the convergence to zero of sequences $(U_{\sigma}^n(p))_n$ with $p\in X$. The proof of this Lemma is easy and follows the classical line as most parts of the proof   of Theorem \ref{thm:density}. 
\begin{lemma}\label{lem:1/2}
If $p\in \Omega$ is a $\sigma$-irrational point then $\|U_{\sigma}^n(p)\|\leq \tfrac{1}{2^n}\|p\|$ where $\|(x,y)\|=|x|+|y|$.
\end{lemma}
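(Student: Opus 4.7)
The plan is to establish the single-step bound $\|U_\sigma(p)\|\leq\tfrac12\|p\|$ for every $\sigma$-irrational $p\in\Omega$, and then iterate. The central symmetry $(x,y)\mapsto-(x,y)$ commutes with $h_\sigma$ and $v_\sigma$ (by oddness of $\sigma$) and preserves $\|\cdot\|$, so I may restrict to $p$ in the upper half plane. Swapping the roles of $h_\sigma$ and $v_\sigma$ reduces the case $p\in B$ (resp.\ $p\in D$) to $p\in A$ (resp.\ $p\in C$), and the case $p\in C$ inside $Y$ is treated by the analogous computation with the positive powers of $h_\sigma,v_\sigma$. I therefore focus on the representative case $p=(x,y)\in A$ with $x,y>0$, for which $y<\sigma(x)$.

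By the definition of the accelerated algorithm, the two Euclidean divisions
\[
x=n\sigma^{-1}(y)+x_1,\qquad 0\leq x_1<\sigma^{-1}(y),\qquad n\geq 1,
\]
\[
y=m\sigma(x_1)+y_1,\qquad 0\leq y_1<\sigma(x_1),\qquad m\geq 1,
\]
produce $F_\sigma(p)=(x_1,y)\in B$ and $U_\sigma(p)=F_\sigma^{2}(p)=(x_1,y_1)\in A$. The $\sigma$-irrationality of $p$ forces $x_1\neq 0$ and $y_1\neq 0$, since otherwise $p$ would lie on one of the $\sigma$-rational lines $h_\sigma^{n}(Oy)$ or $h_\sigma^{n}v_\sigma^{m}(Ox)$. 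From $y\geq\sigma(x_1)+y_1\geq\sigma(x_1)$ I get $\sigma^{-1}(y)\geq x_1$, and from $y_1<\sigma(x_1)$ I get $\sigma(x_1)\geq y_1$; substituting back,
\[
x=n\sigma^{-1}(y)+x_1\;\geq\;\sigma^{-1}(y)+x_1\;\geq\;2x_1,\qquad
y=m\sigma(x_1)+y_1\;\geq\;\sigma(x_1)+y_1\;\geq\;2y_1.
\]
Adding these two inequalities gives $\|p\|=x+y\geq 2(x_1+y_1)=2\|U_\sigma(p)\|$, which is the desired one-step estimate.

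To iterate, I would note that (by the same argument as in Corollary~\ref{cor:euclide}, applied to the accelerated version) each $U_\sigma^{k}(p)$ has the form $w^{-1}(p)$ for some $w\in\MM_\sigma(X)$, and the family of $\sigma$-rational lines is stable under $\MM_\sigma(X)$; hence $U_\sigma^{k}(p)$ is again $\sigma$-irrational and the one-step bound is available at every step. A trivial induction then yields $\|U_\sigma^{n}(p)\|\leq 2^{-n}\|p\|$. I do not expect a real obstacle: the only genuine nuisance is the bookkeeping needed to cover the four sub-regions $A,B,C,D$ uniformly, and this is dispatched at once by the symmetries $(x,y)\leftrightarrow(y,x)$ and $(x,y)\mapsto-(x,y)$ together with the oddness of $\sigma$.
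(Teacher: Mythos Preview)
Your argument is correct and follows essentially the same route as the paper: reduce by symmetry to $p\in A$, perform the two Euclidean divisions that realize $U_\sigma=F_\sigma^2$, and use the remainders' bounds $x_1<\sigma^{-1}(y)$ and $y_1<\sigma(x_1)$ to obtain the one-step inequality, then iterate. The only cosmetic difference is that you bound each coordinate separately ($x\ge 2x_1$ and $y\ge 2y_1$) while the paper adds everything in a single line; your justification of the iteration (that $\sigma$-irrationality is preserved because $p=w(U_\sigma^k(p))$ with $w\in\MM_\sigma(X)$) is also a bit more explicit than the paper's, which simply invokes that all iterates $E_\sigma^n(p)$ have strictly positive coordinates.
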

\begin{proof}
We can assume that $p=(x,y)\in A=h_{\sigma}(X)$ w.l.g. Since $p$ is $\sigma$-irrational, the coordinates of $E^n(p)$ are both $>0$ for all $n\geq 0$. Hence,  $F_{\sigma}(p)=h_{\sigma}^{-k}(x,y)=(x',y')$ for some positive integer $k$ and  $0<x'<\sigma^{-1}(y)$,
and, 
$
F_{\sigma}(x',y')=v_{\sigma}^{-k'}(x',y')=(x'',y'')
$
for some positive integer $k'$ and  $y''<\sigma^{-1}(x')$. It follows that 
\begin{align*}
&x=x'+k\sigma^{-1}(y) \text{ and } y=y'=y''+k'\sigma(x'),\\
&x''=x'\leq k\sigma^{-1}(y) \text{ and } y''<k'\sigma(x'), 
\end{align*}
which implies
\begin{align*}
x+y=x''+k\sigma^{-1}(y) + y''+k'\sigma(x')\geq 2(x''+y'').
\end{align*}
Therefore $\|U_{\sigma}(p)\|\leq \tfrac{1}{2}\|p\|$ and the Lemma follows by induction.
\end{proof}
\begin{proposition} \label{prop:guido} The union of the rational lines is dense in $\R^2$. 
\end{proposition}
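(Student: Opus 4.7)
My plan is to argue by contradiction using the cell partition from Corollary~\ref{cor:euclide}, the Lebesgue-invariance of $h_\sigma$ and $v_\sigma$, and the decay estimate of Lemma~\ref{lem:1/2}. Suppose there exist $q \in \R^2$ and $\epsilon > 0$ such that the open ball $B(q,\epsilon)$ meets no $\sigma$-rational line; I will derive a contradiction.

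Since the axes $Ox$ and $Oy$ are $\sigma$-rational, the point $q$ lies in the interior of one of the four open quadrants, and after shrinking $\epsilon$ I may assume $B(q,\epsilon)$ is contained in this quadrant, which I take to be the open first quadrant (the other three cases are handled symmetrically, using $Y$ in place of $X$ in the two mixed-sign quadrants). For each $n$, let $w_n \in \mathcal M_{\sigma,n}(X)$ be the unique length-$n$ word with $q \in w_n(X)$. The key topological observation is that the boundary of $w_n(X)$ inside the open first quadrant is contained in the two $\sigma$-rational lines $w_n(Ox)$ and $w_n(Oy)$, both of which $B(q,\epsilon)$ avoids by hypothesis. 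Combined with the connectedness of the ball, this forces $B(q,\epsilon) \subset w_n(X)$, so Corollary~\ref{cor:euclide} gives $E_{\sigma}^n = w_n^{-1}$ throughout $B(q,\epsilon)$.

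The contradiction will be measure-theoretic. A direct check on the defining formulas of $E_\sigma$ shows $\|E_\sigma^{n+1}(p)\| \leq \|E_\sigma^n(p)\|$ for every $p$, and combined with Lemma~\ref{lem:1/2} along the accelerated subsequence this yields $E_\sigma^n(p) \to 0$ pointwise for every $\sigma$-irrational $p$, hence almost everywhere on $B(q,\epsilon)$, since $\sigma$-rational points form a countable union of curves. Because $w_n$ preserves Lebesgue measure, $\operatorname{Leb}(w_n^{-1}(B(q,\epsilon))) = \pi\epsilon^2$ for all $n$. Almost-everywhere convergence of $w_n^{-1}$ to $0$ on the finite-measure set $B(q,\epsilon)$ is convergence in measure, so for each fixed $\delta > 0$,
\[
\operatorname{Leb}\bigl(w_n^{-1}(B(q,\epsilon)) \cap B(0,\delta)\bigr) \longrightarrow \pi\epsilon^2 \quad \text{as } n \to \infty.
\]
Taking $\delta < \epsilon$ contradicts $\operatorname{Leb}(B(0,\delta)) = \pi\delta^2$, completing the argument.

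The step I expect to require the most care is the cell-confinement claim $B(q,\epsilon) \subset w_n(X)$: one has to verify that the $2^n$ cells at level $n$ are exactly the connected components of the open first quadrant with the relevant $\sigma$-rational boundary curves removed, and then invoke connectedness of $B(q,\epsilon)$. Once this topological point is nailed down, the dynamical decay from Lemma~\ref{lem:1/2} and the volume contradiction are straightforward.
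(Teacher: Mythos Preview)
Your proof is correct and follows essentially the same approach as the paper: assume an open ball avoids all $\sigma$-rational lines, use connectedness and the fact that the boundary of each cell $w_n(X)$ lies in $\sigma$-rational lines to trap the ball in every cell $w_n(X)$, then combine the decay from Lemma~\ref{lem:1/2} with Lebesgue invariance of $w_n$ to get a measure contradiction. The only difference is cosmetic: the paper observes directly that $w_n^{-1}(V)\subset B(0,r)$ with $\lambda(B(0,r))<\lambda(V)$ for a single large $n$, while you pass through a.e.\ pointwise convergence and convergence in measure; in fact every point of $B(q,\epsilon)$ is $\sigma$-irrational (the ball misses all rational lines), and Lemma~\ref{lem:1/2} already gives a \emph{uniform} bound $\|w_{N_k}^{-1}(p)\|\le 2^{-k}(\|q\|+\epsilon)$, so the detour through a.e.\ convergence is unnecessary.
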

\begin{proof}
Let us prove that $R=\bigcup_{w\in \mathcal M_{\sigma}(X)}w(Ox)$ is dense in $X$. The density in $Y$ is similar.

Suppose on the contrary that there exists an open  set $V$ in $X$ that does not meet $R$. We can suppose that $V$ is connected and bounded. Choose $p$ in $V$ and denote $r=\sup\{d(x,O):x\in V\}$ ($O$ denote the origin of $\R^2$). 
By Lemma \ref{lem:1/2}, we can choose a integer $n$ large enough in order that 
\[
U_{\sigma}^n(V)\subset B(0,r) \text{ for some }r \text{ and }\lambda(V)>\lambda(B(0,r)).
\] 
By definition, $U_{\sigma}^n(p)$ is of the form $U_{\sigma}^n(p)=w_1\circ w_2\dots \circ w_N(p)$ where $w_i=h_{\sigma}^{-1}$ or $v_{\sigma}^{-1}$. 
Consider the map $w=w_N^{-1}\dots \circ w_1^{-1}$, it is an homeomorphism and it preserves the Lebesgue measure. 
The set $w(X)$ contains  $p$ and its boundary is contained in $R$. 
Therefore the boundary of $w(X)$ does not meet $V$. Since $p\in V\cap w(X)$ and since $V$ is connected, $V\subset w(X)$. By Corollary \ref{cor:euclide}, it implies that $w^{-1}(q)=E_{\sigma}^N(q)=U_{\sigma}^n(q)$ for all $q\in V$. Therefore, $w^{-1}(V)=U_{\sigma}^n(V)\subset B(0,r)$. Now $w^{-1}$ preserves the Lebesgue measure, hence $\lambda(V)=\lambda(w^{-1}(V))\leq \lambda(B(0,r))<\lambda(V)$, a contradiction.
\end{proof}
\begin{lemma}\label{lem:density1}
Let $(p_n=(x_n,y_n))_{n\geq 0}$ be a sequence in $\R^2$. Suppose that $(p_n)_{n\geq 0}$ tend to zero and that   all the $x_n$ or all the $y_n$ have the same sign  and are not zero.   Then the closure
\[
\{w(p_n):n\in \N \text{ and } w\in \mathcal M_{\sigma}(X)\}
\]
contains a half axis:\\
$Ox_+$ if all the $y_n$ are $>0$, $Ox_-$ if all the $y_n$ are  $<0$,\\
$Oy_+$ if all the $x_n$ are  $>0$ and $Oy_-$ if all the $x_n$ are  $<0$.
\end{lemma}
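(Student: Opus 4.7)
My plan is to restrict attention to words in $\MM_\sigma(X)$ consisting of a single letter iterated many times: I would apply $h_\sigma^k$ or $v_\sigma^k$ (with $k\geq 1$) to each $p_n$ and let both $n$ and $k$ grow. The key observation is that $h_\sigma^k(x,y)=(x+k\sigma^{-1}(y),y)$ leaves the second coordinate unchanged and translates the first by $k\sigma^{-1}(y)$; since $p_n\to 0$ and $\sigma^{-1}$ is continuous at $0$, the step size $\sigma^{-1}(y_n)$ also tends to $0$. This should be enough to approximate any target on the horizontal half-axis whose sign matches that of $\sigma^{-1}(y_n)$. Symmetrically, $v_\sigma^k(x,y)=(x,y+k\sigma(x))$ would handle the vertical half-axes via the step $\sigma(x_n)$.

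To execute this for the case where all $y_n>0$ and the target is $(a,0)$ with $a>0$, I would choose $n$ large enough that $|x_n|<a/2$ and that $y_n$ and $\sigma^{-1}(y_n)$ are both below a prescribed $\epsilon>0$. Then $k=\lfloor(a-x_n)/\sigma^{-1}(y_n)\rfloor$ is a positive integer because $a-x_n>a/2>0$ and $\sigma^{-1}(y_n)>0$, and by construction $h_\sigma^k(p_n)=(x_n+k\sigma^{-1}(y_n),y_n)$ lies within $2\epsilon$ of $(a,0)$ in the norm $\|\cdot\|$ used in Lemma \ref{lem:1/2}. The case $y_n<0$ is the same after noting that $\sigma^{-1}(y_n)<0$, so iterating $h_\sigma$ now sweeps the negative half-line and reaches any $a<0$. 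The two $x_n$-signed cases are parallel, with $h_\sigma^k$ replaced by $v_\sigma^k$ and the step $\sigma^{-1}(y_n)$ replaced by $\sigma(x_n)$, whose sign equals that of $x_n$ since $\sigma$ is odd and increasing.

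The main point requiring care, and essentially the only obstacle, is that $\MM_\sigma(X)$ allows only nonnegative exponents: one cannot apply $h_\sigma^{-1}$ or $v_\sigma^{-1}$. Consequently, the sign of the available step $\sigma^{-1}(y_n)$ or $\sigma(x_n)$ must match the sign of the coordinate of the target point, and this matching is precisely what produces the four cases listed in the statement. Once the sign matching is arranged, the rest is a one-dimensional approximation of a real number by an arithmetic progression whose step tends to $0$, combined with the decay $p_n\to 0$ which kills the coordinate orthogonal to the target axis.
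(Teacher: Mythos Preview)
Your proof is correct and follows essentially the same route as the paper: both apply only the powers $h_\sigma^k$ (respectively $v_\sigma^k$) to $p_n$, use that the orbit $\{h_\sigma^k(p_n):k\in\N\}$ lies on the horizontal line $y=y_n$ with spacing $\sigma^{-1}(y_n)\to 0$, and conclude that the union over $n$ accumulates on the appropriate half-axis. Your version is merely a bit more explicit (choosing $k=\lfloor (a-x_n)/\sigma^{-1}(y_n)\rfloor$), whereas the paper just observes qualitatively that the spacing tends to zero.
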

\begin{proof}
The four cases are similar and we deal only with the case, $y_n>0$ for all $n$. For $n\in\N$, consider the sets  
\[
A_n=\{h_{\sigma}^k(x_n,y_n)=(x_n+k\sigma^{-1}(y_n),y_n):k\in \N\}.
\]
Observe that the points in $A_n$ are on the line $y=y_n$ and are spaced by intervals of same length $\sigma^{-1}(y_n)$.
Therefore, since the sequences $(x_n)_{n\geq 0}$, $(y_n)_{n\geq 0}$ and   $(\sigma^{-1}(y_n))_{n\geq 0}$ tend to zero, the closure of the union $\bigcup_{n\geq 0}A_n$ contains $Ox_+$. Since all the maps $h_{\sigma}^k$, $k\in\N$, are in $\mathcal M_{\sigma}(X)$ we are done.
\end{proof}
\begin{proposition}
If $p\in X_{+}=\{(x,y)\in X: x>0\}$ is  $\sigma$-irrational, then 
\[\bigcup_{n\geq 0}\bigcup_{k\geq 0}E_{\sigma}^{-n}\{E_{\sigma}^{k}(\{p\})\}
\]
 is dense in $X_{+}$. 
\end{proposition}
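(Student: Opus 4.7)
The plan is to rewrite the target set as a single $\mathcal M_\sigma(X)$-orbit of a seed sequence tending to the origin, apply Lemma \ref{lem:density1} to place the positive $x$-half-axis into its closure, and then transfer the density of rational lines from Proposition \ref{prop:guido} to $X_+$ by invariance.

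First, by Corollary \ref{cor:euclide}, for every $q\in X$ the preimages $E_\sigma^{-n}(\{q\})$ are precisely $\{w(q):w\in\mathcal M_\sigma(X),\ |w|=n\}$, and since $E_\sigma$ preserves $X$, the set we must show dense in $X_+$ is the monoid-orbit
\[
S:=\{w(E_\sigma^k(p)) : w\in\mathcal M_\sigma(X),\ k\ge 0\}.
\]

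Next I would verify that the sequence $q_k:=E_\sigma^k(p)$ converges to $0$ while remaining in the open first quadrant. No $q_k$ can meet $Ox$ or $Oy$: if $q_k\in Ox$, writing $q_k=w^{-1}(p)$ for the unique $w\in\mathcal M_\sigma(X)$ of length $k$ (Corollary \ref{cor:euclide}) would force $p\in w(Ox)$, contradicting $\sigma$-irrationality. At a point of $X_+$ with both coordinates strictly positive, $E_\sigma$ acts as $h_\sigma^{-1}$ or $v_\sigma^{-1}$, and the sum norm $\|(x,y)\|=|x|+|y|$ strictly decreases at each step by $\sigma^{-1}(y)$ or $\sigma(x)$. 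Hence $(\|q_k\|)$ is monotone, and since Lemma \ref{lem:1/2} furnishes a subsequence $(U_\sigma^n(p))=(F_\sigma^{2n}(p))$ going to $0$, the full sequence satisfies $q_k\to 0$. This is the most delicate step of the plan: Lemma \ref{lem:1/2} only controls the accelerated iterates, and bridging the gap to every $E_\sigma$-iterate requires the sum-norm monotonicity, which in turn requires $\sigma$-irrationality to keep every iterate off the axes.

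Finally, applying Lemma \ref{lem:density1} to $(q_k)$, whose coordinates are both strictly positive, yields $Ox_+\subset\overline{S}$. Since every $w\in\mathcal M_\sigma(X)$ is a homeomorphism with $w(S)\subseteq S$, the closure $\overline S$ is $\mathcal M_\sigma(X)$-invariant, so
\[
\overline S \supseteq \bigcup_{w\in\mathcal M_\sigma(X)} w(Ox_+).
\]
By Proposition \ref{prop:guido}, $R=\bigcup w(Ox)$ is dense in $X$; because each $w$ preserves $X_+$ and commutes with the central symmetry, $R\cap X_+=\bigcup w(Ox_+)$. As any point of $X_+\setminus\{O\}$ admits a neighborhood contained in $\{x>0\}$, the density of $R$ in $X$ localizes to density of $\bigcup w(Ox_+)$ in $X_+$. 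Therefore $\overline S\supseteq X_+$, which is exactly the asserted density.
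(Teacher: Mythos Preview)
Your argument is correct and follows essentially the same route as the paper: identify the target set with the $\mathcal M_\sigma(X)$-orbit of the forward Euclidean iterates, push the seed sequence to $0$, apply Lemma~\ref{lem:density1} to capture $Ox_+$, and conclude via Proposition~\ref{prop:guido}. The only organisational difference is that the paper uses the sparser seed $(U_\sigma^n(p))_n$ directly controlled by Lemma~\ref{lem:1/2}, whereas you use all iterates $(E_\sigma^k(p))_k$ and then need the extra sum-norm monotonicity step to deduce $q_k\to 0$ from the behaviour of that subsequence; both work, but the paper's choice makes Step~2 a one-liner.
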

\begin{remark}
Similar statements hold for the three other quadrants.
\end{remark}
\begin{remark}\label{rem:closure}
We shall use the following general fact: if $\mathcal M$ is a monoid of continuous maps acting on topological space $Z$ and if the closure of an orbit $\mathcal MA$ of some subset $A\subset Z$, contains a subset $B\subset Z$, then this closure contains $\mathcal MB$.

\end{remark}

\begin{proof}[Proof of the Proposition]
By Lemma \ref{lem:1/2}, the accelerated algorithm generate a sequence $(p_n=(x_n,y_n)=U_{\sigma}^n(p))_{n\geq 0}$ that satisfies the assumptions of the previous Lemma. Since $y_n>0$ for all $n$, the half axis $Ox_+$ is in the closure $\mathcal C$ of 
\[
\{w(p_n):n\in \N \text{ and } w\in \mathcal M_{\sigma}(X)\}
\]
By  proposition \ref{prop:guido}, the rational lines are dense in $\R^2$ and hence $\bigcup_{w\in \mathcal M_{\sigma}(X)}w(Ox_+)$ is dense in $X_+$. Therefore,  $\mathcal C$ contains $X_+$.

It remains to see that $v=w(U_{\sigma}^n(p))\in \bigcup_{k\geq 0}\bigcup_{n\geq 0}E_{\sigma}^{-k}\{E_{\sigma}^{n}(\{p\})\}$ for all $n\in\N$ and  $w\in\mathcal M_{\sigma}(X)$. 
 First observe that $u=U_{\sigma}^n(p)\in \bigcup_{i\geq 0}E_{\sigma}^{i}(\{p\})$. Next $w$ is a word in the free monoid $\mathcal M_{\sigma}(X)$ with a certain lengh $j$. Since $v=w(u)\in w(X)$,  $E_{\sigma}^j(v)=u$ by Corollary \ref{cor:euclide}, and hence $v\in E_{\sigma}^{-j}(\bigcup_{i\geq 0}E_{\sigma}^{i}(\{p\})$. 
\end{proof} 
\begin{proof}[Proof of Theorem \ref{thm:density}] Let $p$ be a $\sigma$-irrational point in $\R^2$. 
We want to prove that its orbit under the action of $\Gamma(\sigma)$ is dense in $\R^2$. 
The point $p$ is in one of the four quadrants, say $p\in X_{+}$. 
By the previous Proposition, we already know that the orbit $p$ is dense in $X_{+}$, hence the half axis $Oy_+$ is in the closure of the orbit of $p$.  Since the rational lines are dense, the set $\mathcal M_{\sigma}(Y)Oy_+$ is dense in $Y_+=\{(x,y)\in Y: y\geq 0\}$ and the orbit of $p$ as well. Continuing this process, turning around the origin, we see that the orbit $p$ is dense in the four quadrants.
\end{proof}

\subsection{$\sigma$-continued fraction expansion.} We suppose that the homeomorphism $\sigma$ is defined by $\sgn(x)|x|^{\alpha}$ where $\alpha$ is a positive real number. Using the property $\sigma(xy)=\sigma(x)\sigma(y)$, we are going to see that the accelerated $\sigma$-Euclidean algorithm  defines a $\sigma$-continued fraction expansion. 

Let$(x,y)=(r\sigma^{-1}(y),y)$ be in $\{(x,y):0< \sigma^{-1}(y)<x\}$. The real number $r\in]1,+\infty[$ is the $\sigma$-(inverse)slope of $(x,y)$.  Suppose that $U_{\sigma}(x,y)=v_{\sigma}^{-b}\circ h_{\sigma}^{-a}(x,y)$ for some positive integers $a$ and $b$. Put $U_{\sigma}(x,y)=(X,Y)=(\sigma^{-1}(Y)r,Y)$.  Since 
\begin{align*}
(\sigma^{-1}(y)r,y)&=h_{\sigma}^a\circ v_{\sigma}^b(\sigma^{-1}(Y)R,Y)\\
&=h_{\sigma}^a(\sigma^{-1}(Y)R,Y(1+b\sigma(R))\\
&=(\sigma^{-1}(Y)(R+a\sigma^{-1}(1+b\sigma(R))),Y(1+b\sigma(R))\\
&=(\sigma^{-1}(y)\tfrac{R+a\sigma^{-1}(1+b\sigma(R))}{\sigma^{-1}(1+b\sigma(R))},Y(1+b\sigma(R)),
\end{align*} 
we have, 
\[
r=\frac{R+a\sigma^{-1}(1+b\sigma(R))}{\sigma^{-1}(1+b\sigma(R))}=a+\frac{1}{\sigma^{-1}(b+\frac{1}{\sigma(R)})}=S_{a,b}(R).
\]
Iterating the accelerated $\sigma$-Euclidean algorithm we obtain sequences  $(r_n)_{\geq 0}$, $(y_n)_{\geq 0}$, $(a_n)_{n\geq 1}$ and $(b_n)_{n\geq 1}$  defined by $r_0=r$ and $y_0=y$ and
\[
(\sigma^{-1}(y_{n+1})r_{n+1},y_{n+1}))=U_{\sigma}(\sigma^{-1}(y_{n})r_{n},y_{n})=v_{\sigma}^{-b_n}\circ h_{\sigma}^{-a_n}(\sigma^{-1}(y_{n})r_{n},y_{n}).
\] 
This gives rise to a $\sigma$-continued fraction expansion of the $\sigma$-slope $r$:
\[
r=S_{a_1,b_1}\circ\dots\circ S_{a_n,b_n}(r_n).
\]
In this context, the ``golden $\sigma$-slope'' can be defined by the equation
\[
r=S_{1,1}(r)=1+\frac{1}{\sigma^{-1}(1+\frac{1}{\sigma(r)})}.
\]
When $\alpha=2$ this latter equation is 
\[
r=1+\frac{1}{\sqrt{1+\frac{1}{r^2}}}
\]
which have an unique solution in the interval $]1,+\infty[$:
\[
r=\frac12\left(1+\sqrt 2 +\sqrt{2\sqrt 2-1}\right)=1.883203506\dots
\]
Actually $r$ is an algebraic integer solution of the equation
\[
r^4-2r^3+r^2-2r+1=0.
\]
\section{Density: higher dimensional results} \label{sec:high}
We fix an odd increasing homeomorphism $\sigma:\R\fff\R$. When $n\geq 3$,
the generalized transvections in $\R^n$ associated with $\sigma$ are  maps from $\R^n$ to $\R^n$ defined as follow. If $i<j\in\{1,\dots,n\}$ are two integers, we define 
\begin{align*}
h_{i,j}(x_1,\dots,x_n)&=(x_1,\dots,x_{i-1},x_i+\sigma^{-1}(x_j),x_{i+1},\dots,x_n),\\
v_{i,j}(x_1,\dots,x_n)&=(x_1,\dots,x_{j-1},x_j+\sigma(x_i),x_{j+1},\dots,x_n).
\end{align*}
Denote $\mathcal M(\sigma,n)$  the monoid generated by the transvections $h_{i,j}$ and $v_{i,j}$, $1\leq i<j\leq n$ and $\Gamma(\sigma,n)$ the group generated by the same transvections.
There are two main results about density. We prove only the second one which is perhaps more surprising. Its proof is an adaptation of a proof of Dany and Nogueira in \cite{DaNo2}.  The proof of the first Theorem is easier. 
\begin{theorem}\label{thm:densityn1}
Suppose $n\geq 3$. Let $x=(x_1,\dots,x_n)$ be in $\R^n$. Suppose that there exists $i<j\in\{1,\dots,n\}$ such that $(x_i,x_j)$ is $\sigma$-irrational in $\R^2$. Then $\Gamma(\sigma,n)x$ is dense in $\R^n$. 
\end{theorem}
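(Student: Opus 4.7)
The plan is a three-step reduction. Fix a target $t=(t_1,\dots,t_n)\in\R^n$ and a tolerance $\epsilon>0$; we seek $g\in\Gamma(\sigma,n)$ with $\|g(x)-t\|_\infty<\epsilon$. Write $\Gamma_2=\langle h_{i,j},v_{i,j}\rangle\subset\Gamma(\sigma,n)$; since both generators affect only coordinates $i$ and $j$, the action of $\Gamma_2$ on the $(i,j)$-plane is exactly that of $\Gamma(\sigma)$ in Theorem~\ref{thm:density}, while every coordinate outside $\{i,j\}$ stays fixed.

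The structural observation that drives the proof is that each elementary transvection changes exactly one coordinate: $h_{p,q}$ alters only position $p$ and $v_{p,q}$ only position $q$, while the shift value depends solely on the other indexed coordinate. Thus, for each $k\notin\{i,j\}$, the ``side'' transvection
\[
T_k=\begin{cases}h_{k,j}&\text{if }k<j,\\ v_{j,k}&\text{if }k>j,\end{cases}
\]
modifies only $y_k$, adding a quantity $\tau_k\in\{\sigma^{-1}(y_j),\sigma(y_j)\}$ that itself depends only on $y_j$; in particular, distinct $T_k$'s commute and none disturb $y_i$ or $y_j$.

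Step 1. Since $(x_i,x_j)$ is $\sigma$-irrational, Theorem~\ref{thm:density} gives that its $\Gamma_2$-orbit is dense in $\R^2$, so for any $\delta>0$ we can choose $g_1\in\Gamma_2$ with $y^{(1)}=g_1(x)$ satisfying $0<|y^{(1)}_j|<\delta$; note $y^{(1)}_k=x_k$ for every $k\notin\{i,j\}$. Step 2. For each such $k$, pick $m_k\in\Z$ so that $|y^{(1)}_k+m_k\tau_k-t_k|\le|\tau_k|/2$ and apply $\prod_{k}T_k^{m_k}$ (any order, since the $T_k$'s commute). The resulting $y^{(2)}$ satisfies $(y^{(2)}_i,y^{(2)}_j)=(y^{(1)}_i,y^{(1)}_j)$ and $|y^{(2)}_k-t_k|\le|\tau_k|/2$ for $k\notin\{i,j\}$. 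Step 3. The pair $(y^{(2)}_i,y^{(2)}_j)$ lies in the $\Gamma_2$-orbit of $(x_i,x_j)$ and therefore shares its (dense) orbit in $\R^2$; pick $g_3\in\Gamma_2$ sending this pair within $\epsilon$ of $(t_i,t_j)$. Applying $g_3$ leaves every $y^{(2)}_k$ with $k\notin\{i,j\}$ unchanged, so with $\delta$ chosen small enough to force $|\tau_k|/2<\epsilon$ for every $k$, the composition $g=g_3\circ\prod_k T_k^{m_k}\circ g_1$ satisfies $\|g(x)-t\|_\infty<\epsilon$.

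The only nontrivial content consists of the two invocations of Theorem~\ref{thm:density}; everything else reduces to the coordinate-wise bookkeeping summarized above. The step to watch is Step 3, where one must note that the $\Gamma_2$-orbit of $(y^{(1)}_i,y^{(1)}_j)$ coincides with that of $(x_i,x_j)$ (so it is still dense) and that applying a $\Gamma_2$-element leaves intact the approximations achieved in Step 2. Both facts are immediate from the structural observation, so no serious obstacle remains.
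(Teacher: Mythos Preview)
Your argument is correct. The paper in fact omits the proof of Theorem~\ref{thm:densityn1}, remarking only that it is ``easier'' than Theorem~\ref{thm:densityn2}, which is proved in full. Your three-step scheme—using Theorem~\ref{thm:density} to make $y_j$ small but nonzero, then adjusting each coordinate $k\notin\{i,j\}$ by an integer power of a single transvection that reads only $y_j$, and finally invoking Theorem~\ref{thm:density} again on the unchanged $(i,j)$-pair—is a clean and natural realization of the omitted argument. The availability of inverses (arbitrary $m_k\in\Z$) is precisely what makes this route simpler than the monoid case of Theorem~\ref{thm:densityn2}, where the paper must instead build up the closure quadrant by quadrant via the auxiliary lemma in Section~\ref{sec:high}.
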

\begin{theorem}\label{thm:densityn2}
Suppose $n\geq 3$. Let $x=(x_1,\dots,x_n)$ be in $\R^n$. Suppose that there exists $i<j\in\{1,\dots,n\}$ such that $x_ix_j<0$ and $(x_i,x_j)$ is $\sigma$-irrational in $\R^2$. Then $\mathcal M(\sigma,n)x$ is dense in $\R^n$.
\end{theorem}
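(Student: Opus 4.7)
The plan is to adapt the technique of Dani and Nogueira from \cite{DaNo2}. By relabelling coordinates we may assume $(i,j)=(1,2)$ and $x_1<0<x_2$, so that the pair lies in $Y$. The key is to exploit the sub-monoid $\mathcal M(h_{1,2},v_{1,2})\subset \mathcal M(\sigma,n)$, which acts only on the first two coordinates and implements the two-dimensional generalized Euclidean algorithm on $Y$, in order to use $(x_1,x_2)$ as a source of arbitrarily small quantities of both signs that can then be fed into the remaining coordinates.

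The first step is Euclidean reduction on the pair. Since $(x_1,x_2)$ is $\sigma$-irrational, iterates of $E_\sigma$ never meet a $\sigma$-rational line, stay in $Y$, and by Lemma \ref{lem:1/2} they converge to $0$. Thus for any $\delta>0$ there is $w_0\in\mathcal M(h_{1,2},v_{1,2})$ with $w_0(x)=(x_1',x_2',x_3,\dots,x_n)$, $x_1'<0<x_2'$, $|x_1'|+|x_2'|<\delta$, and the pair $(x_1',x_2')$ still $\sigma$-irrational.

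The second step adjusts the remaining coordinates. For each $k\geq3$, the maps $v_{1,k}$ and $v_{2,k}$ change only $x_k$, adding $\sigma(x_1')<0$ or $\sigma(x_2')>0$, both arbitrarily small. Refining $(x_1',x_2')$ while keeping it $\sigma$-irrational, one arranges the ratio $\sigma(x_1')/\sigma(x_2')$ to be irrational, so that the subsemigroup $\{a\sigma(x_1')+b\sigma(x_2'):a,b\in\N\}$ is dense in $\R$; hence each $x_k$, $k\geq3$, can be brought within $\epsilon$ of its target $y_k$ independently. To finally move $(x_1,x_2)$ close to $(y_1,y_2)$, one uses $h_{1,k}$ and $h_{2,k}$, which add $\sigma^{-1}(x_k)$ to $x_1$ or $x_2$ without touching $x_k$; by reserving, in the previous step, two indices $k,\ell\geq3$ with prescribed opposite signs of $x_k,x_\ell$ and arbitrarily small magnitudes, one obtains increments of both signs and arbitrarily small magnitude on $x_1$ and on $x_2$.

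The main obstacle is combinatorial: because the monoid only produces positive words, a direct sign analysis shows that no single configuration of $(x_1,\dots,x_n)$ makes all $2n$ axial directions immediately accessible with small increments, so the adjustments of different coordinates cannot be carried out in a single pass. Following \cite{DaNo2}, one resolves this by a staged scheme in which one alternates between regenerating a fresh small $\sigma$-irrational pair on the first two coordinates and using it to nudge selected coordinates in a chosen direction; the scale $\delta_m$ at stage $m$ is chosen so that the perturbations inflicted on coordinates already tuned decay geometrically, and the total error is bounded by any prescribed $\epsilon$.
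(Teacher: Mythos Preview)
Your opening move---running the Euclidean algorithm on the $(i,j)$ pair inside the monoid to produce points whose $(x_i,x_j)$ are small and of opposite sign---is exactly right and matches the paper. After that, however, your argument diverges from the paper's and runs into real trouble.

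\textbf{Two concrete gaps.} First, the sentence ``Refining $(x_1',x_2')$ while keeping it $\sigma$-irrational, one arranges the ratio $\sigma(x_1')/\sigma(x_2')$ to be irrational'' is unjustified: the iterates of $E_\sigma$ are determined by the initial point, and there is no reason any particular iterate yields an irrational ratio. Second, and more seriously, your ``staged scheme'' relies on ``regenerating a fresh small $\sigma$-irrational pair on the first two coordinates'' after earlier stages. But once you have used $h_{1,k},h_{2,k}$ to push $(x_1,x_2)$ toward an arbitrary target $(y_1,y_2)$, you have destroyed both the sign pattern $x_1<0<x_2$ and the $\sigma$-irrationality that the Euclidean reduction needs; there is no mechanism in the monoid to restore them. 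So the alternation you describe cannot be iterated, and the geometric-decay bookkeeping never gets off the ground.

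\textbf{How the paper avoids this.} Instead of an explicit $\epsilon$-approximation to a fixed target, the paper proves a closure lemma: if $B\subset\R^n$ has $x_ix_j<0$ for all $x\in B$ and $\overline B$ contains a point $p$ with $p_i=p_j=0$, then $\overline{\mathcal M(\sigma,n)B}$ contains the whole set $Q_{i,j}=\{x:x_ix_j>0\}$. The proof uses the entire sequence of small pairs at once (so no ratio irrationality is needed): as $x\to p$ in $B$, the increments $\tau(x_i),\tau(x_j)$ tend to zero with opposite signs, hence the closure picks up $p+\R e_k$ for each $k\neq i,j$, and inductively $\bigoplus_{k\neq i,j}\R e_k$. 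From there one obtains the half-axes $\R_{\geq 0}e_i$ and $\R_{\leq 0}e_j$, and finally the full positive/negative $(i,j)$-quadrants by applying $\mathcal M(h_{i,j},v_{i,j})$ to those half-axes and invoking the density of $\sigma$-rational lines (Proposition~\ref{prop:guido}). This last step is what replaces your problematic adjustment of $(x_1,x_2)$. The theorem then follows by taking $B$ to be the set of Euclidean iterates of $x$, and bootstrapping: inside $Q_{i,j}$ one finds, for any other pair $(k,l)$, a subset satisfying the lemma's hypotheses with indices $(k,l)$, so every $Q_{k,l}$ lies in the closure.
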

We shall use a $n$-dimensional extension of Lemma \ref{lem:density1} combined with the Proposition about the density of rational lines.
\begin{lemma}
Let $1\leq i<j\leq n$ be two integers and let $B$ be  a subset in $\R^n$  such that
\begin{itemize}
\item for all $x=(x_1,\dots,x_n)\in B,\, x_ix_j<0$,
\item The closure of $B$ contains a point $p=(p_1,\dots,p_n)$ with $p_i=p_j=0$. 
\end{itemize} 
Then the closure $C$ of $\mathcal M(\sigma,n)B$ contains
\[
Q_{i,j}=\{(x_1,\dots,x_n)\in \R^n: x_ix_j>0\}.
\] 
\end{lemma}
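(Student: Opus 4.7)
The plan is to relabel so $i=1$, $j=2$ and proceed in two stages: first adjust the coordinates $x_3,\ldots,x_n$, then the first two, using that every $b^{(m)}\in B$ close to $p$ has $b^{(m)}_1,b^{(m)}_2$ tiny and of opposite signs. Pick a sequence $(b^{(m)})\subset B$ with $b^{(m)}\to p$; passing to a subsequence, assume the signs of $b^{(m)}_1,b^{(m)}_2$ are constant, and up to exchanging the two indices and applying the central symmetry, that $b^{(m)}_1>0>b^{(m)}_2$. Fix a target $q\in Q_{1,2}$ (WLOG with $q_1,q_2>0$) and $\varepsilon>0$. The key observation is that $\sigma(b^{(m)}_1)>0$ and $\sigma(b^{(m)}_2)<0$ are tiny of opposite signs, so their non-negative integer combinations $a\sigma(b^{(m)}_1)+b\sigma(b^{(m)}_2)$ (over $a,b\in\N$ and large $m$) form a dense subset of $\R$.

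Stage~A handles the coordinates $k\geq 3$. The transvections $v_{1,k}$ and $v_{2,k}$ modify only $x_k$, adding $\sigma(x_1)$ and $\sigma(x_2)$ respectively, and transvections attached to different $k$ commute. Applying a product $\prod_{k=3}^{n} v_{1,k}^{a_k}v_{2,k}^{b_k}$ to $b^{(m)}$ therefore shifts each $x_k$ by $a_k\sigma(b^{(m)}_1)+b_k\sigma(b^{(m)}_2)$ while leaving $x_1,x_2$ unchanged. By the key observation, for $m$ large I can choose $a_k,b_k\in\N$ so that each new $x_k$ lies within $\varepsilon/2$ of $q_k$. Thus $C=\overline{\mathcal M(\sigma,n)B}$ contains a sequence $(c^{(m)})$ with $c^{(m)}_1=b^{(m)}_1\to 0^+$, $c^{(m)}_2=b^{(m)}_2\to 0^-$ and $c^{(m)}_k\to q_k$ for $k\geq 3$.

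Stage~B, the heart of the argument, brings the first two coordinates from $(c^{(m)}_1,c^{(m)}_2)\in Y$ to $(q_1,q_2)\in \{x_1,x_2>0\}$. Since $h_{1,2},v_{1,2}$ act only on the first two coordinates, the problem reduces to a two-dimensional density statement for $\mathcal M(h_\sigma,v_\sigma)$ on sequences in $Y$ shrinking to $0$. It suffices to treat $\sigma$-irrational targets, which are dense in $Q^{++}$. For such a target, Corollary~\ref{cor:euclide} combined with Lemma~\ref{lem:1/2} gives a factorisation $(q_1,q_2)=W_N(q^{(N)})$ with $W_N\in\mathcal M(h_\sigma,v_\sigma)$ and $\|q^{(N)}\|\leq 2^{-N}\|(q_1,q_2)\|$, and $q^{(N)}\in A\cup B$. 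I then prepend a short bridge $h_{1,2}^{l_0}v_{1,2}^{k_0}$, with $k_0\in\N$ chosen so that $c^{(m)}_2+k_0\sigma(c^{(m)}_1)$ is close to $q^{(N)}_2$, then $l_0\in\N$ so that the resulting first coordinate is close to $q^{(N)}_1$ (both positive integers are permitted by the signs, since $c^{(m)}_2<0<q^{(N)}_2$ and $c^{(m)}_1<q^{(N)}_1$ for $m$ large). Applying $W_N$ (interpreted with $h_{1,2},v_{1,2}$ in place of $h_\sigma,v_\sigma$) to the bridged point yields a point whose first two coordinates are close to $(q_1,q_2)$, while $x_3,\ldots,x_n$ remain as in Stage~A.

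The main obstacle will be the coherent synthesis of the Euclidean depth $N$ and the sequence index $m$: the word $W_N$ can amplify small errors by a factor of order $2^N$, so the bridge precision must scale like $2^{-N}\varepsilon$, which is strictly finer than the one-shot bridge error $\sim\sigma^{-1}(q^{(N)}_2)\sim 2^{-N}$. The way out is to select $m$ along the subsequence of $\sigma$-continued-fraction convergents of $(q_1,q_2)$, which aligns $c^{(m)}$ with $q^{(N)}$ in the appropriate way; in the linear case $\sigma=\mathrm{Id}$ this is the classical fact that the best rational approximations $F_{N+1}/F_N$ to a given irrational are produced by the Fibonacci-type words $W_N$ applied to the scaled vectors $(1/F_N,-1/F_N)$, and the nonlinear case proceeds by analogy using the convergents arising from the $\sigma$-CF expansion discussed at the end of Section~\ref{sec:density}. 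Combining Stages~A and B then places every $\sigma$-irrational $q\in Q_{1,2}$ in $C$, and closedness of $C$ gives $Q_{1,2}\subset C$.
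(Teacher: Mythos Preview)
Your Stage~A is fine and essentially matches the paper's first step: using the transvections that move the $k$-th coordinate ($k\neq i,j$) by small quantities of opposite sign, you can reach any point of the hyperplane $\{x_i=x_j=0\}$ in the closure, while keeping the $i$-th and $j$-th coordinates small and of fixed opposite signs.

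Stage~B, however, contains a real gap. You correctly identify the obstacle: after bridging from $(c^{(m)}_1,c^{(m)}_2)$ to a point near $q^{(N)}$, applying the word $W_N$ can amplify the bridge error, and the $x_1$-error of the bridge is of order $\sigma^{-1}(q^{(N)}_2)$, which does \emph{not} improve with~$m$. Your proposed fix---``select $m$ along the subsequence of $\sigma$-continued-fraction convergents of $(q_1,q_2)$''---does not make sense: the sequence $(b^{(m)})$ is \emph{given}, converging to the fixed point~$p$; it carries no information about the target~$q$, so there is no way to align it with the convergents of~$q$. The ``classical fact'' you quote is also garbled (Fibonacci ratios approximate only the golden ratio). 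In short, Stage~B as written does not close.

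The paper bypasses this difficulty entirely. Instead of aiming at $(q_1,q_2)$ directly, it first shows that $C$ contains, for every $q$ with $q_i=q_j=0$, the half-axis $q+\R_{\geq 0}e_i$ (and also $q+\R_{\leq 0}e_j$): one simply applies powers of $h_{i,j}$ (resp.\ $v_{i,j}$) to points of $B''$ and lets them tend to~$q$, exactly as in Lemma~\ref{lem:density1}. Then Proposition~\ref{prop:guido} on the density of $\sigma$-rational lines gives that $\mathcal M(h_{i,j},v_{i,j})(q+\R_{\geq 0}e_i)$ is dense in $q+\R_{\geq 0}e_i+\R_{\geq 0}e_j$, and symmetrically for the other quadrant. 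This two-step route (half-axis, then rational-line density) completely avoids any Lipschitz estimate for long Euclidean words; that is the key idea you are missing.
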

\begin{proof}[Proof of the Lemma]
We can suppose that $x_i<0$ and $x_j>0$ for all $x=(x_1,\dots,x_n)\in B$ w.l.g.
Denote $(e_1,\dots,e_n)$ the standard basis of $\R^n$.  For integers $l\neq k$ and real numbers $\alpha$, consider the set 
\begin{align*}
A(l,k,\alpha)=\N\tau(\alpha)e_k,
\end{align*}
where $\tau=\sigma$ if $l<k$ and $\tau=\sigma^{-1}$ if $l>k$.
Fix an integer $k\neq i$ and $j$. If $x=(x_1,\dots,x_n)\in B$, then 
$x+A(i,k,x_i)$ and $x+A(j,k,x_j)$ are included in $\mathcal M(\sigma,n)B$. Observe that if $x=(x_1,\dots,x_n)\in B$, $x_i$ and $x_j$ are of opposite sign and so are $\tau(x_i)$ and $\tau'(x_j)$ for any increasing odd homeomorphisms $\tau$ and $\tau'$.   Letting $x\fff p$ in $B$, and  making use of the continuity of $\sigma$ and $\sigma^{-1}$ at zero, we see that  the closure of
\[
B'=\bigcup_{x=(x_1,\dots,x_n)\in B}(x+A(i,k,x_i))\cup(x+A(j,k,x_j))
\]
contains $p+\R e_k$. Therefore $B'$ satisfies
\begin{itemize}
\item $B'\subset\mathcal M(\sigma,n)B$,
\item for all $x=(x_1,\dots,x_n)\in B',\, x_i<0,\,x_j>0$,
\item the closure of $B'$ contains the set $p+\R e_k$. 
\end{itemize}
Using inductively this process with the integer $k$ ranging in $\{1,\dots,n\}\setminus\{i,j\}$, we see that $\mathcal M(\sigma,n)B$ contains a set $B''$ satisfying 
\begin{itemize}
\item $B''\subset\mathcal M(\sigma,n)B$,
\item for all $x=(x_1,\dots,x_n)\in B'',\, x_i<0,\,x_j>0$,
\item The closure of $B''$ contains the set $p+\bigoplus_{k\neq i,j}\R e_k=\bigoplus_{k\neq i,j}\R e_k$. 
\end{itemize}  
Again the sets $x+A(i,j,x_i)$ and $x+A(j,i,x_j)$ are included in $\mathcal M(\sigma,n)B$ for all $x$ in $B''$. Letting $x\fff q$, where $q$ is any point in $\bigoplus_{k\neq i,j}\R e_k$, we see that $C$ contains
\[
\bigoplus_{k\neq i,j}\R e_k+\R_{\geq 0}e_i \text{ and } \bigoplus_{k\neq i,j}\R e_k+\R_{\leq 0}e_j
\]
Finally, we deduce from the density of the rational lines in $\R^2$ that for each $q\in\bigoplus_{k\neq i,j}\R e_k$, the closure of $\mathcal M(h_{i,j},v_{i,j})(q+
\R_{\geq 0}e_i)$ is $q+
\R_{\geq 0}\,e_i+\R_{\geq 0\,}e_j$ and the closure of $\mathcal M(h_{i,j},v_{i,j})(q+
\R_{\leq 0}e_j)$ is $q+
\R_{\leq 0}\,e_i+\R_{\leq 0\,}e_j$. 
Therefore $C$ contains $Q_{i,j}$
\end{proof}

\begin{proof}[Proof of Theorem \ref{thm:densityn2}]
Let $x=(x_1,\dots,x_n)$ be in $\R^n$. Suppose that there exists $i<j\in\{1,\dots,n\}$ such that $x_ix_j<0$ and $(x_i,x_j)$ is $\sigma$-irrational in $\R^2$.

Consider the accelerated Euclidean algorithm in the plane $x+\R e_i+\R e_j$ (we extend this map by not changing the other components). The positive iterates of this map applied to $x$ form a set $B$ that satisfies the assumptions of the above Lemma. Therefore the closure $F$ of $\mathcal M(\sigma,n)B=\mathcal M(\sigma,n)x$ contains $Q_{i,j}$.

For each $k\neq l$, 
\[
B_{k,l}=\{(x_1,\dots,x_n)\in Q_{i,j}: x_k<0 \text{ and } x_l>0\}
\]
satisfies the hypothesis of the above Lemma with the indices $k$ and $l$, therefore $Q_{k,l}\subset F$. Since it holds for any $k,l$, we are done.
\end{proof}

\section{Proof of Theorem \ref{thm:density2}} \label{sec:density2}
Recall that we suppose $\sigma(x)=\sgn(x)x^2$ in Theorem \ref{thm:density2}.
We need two algebraic Lemmas. 
\begin{lemma}
Let $K\subset \R$ be a field, and let $x\leq 0$ and $y\neq 0$ be two elements in $K$. If $z$ is a positive real number which is not a square in $K$ then $x+y\sqrt{z}$ is not a square in $K(\sqrt{z})$. Therefore $K(\sqrt{z})(\sqrt{x+y\sqrt{z}})$ is an extension of degree to of $K(\sqrt{z})$. 	
\end{lemma}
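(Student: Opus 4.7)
My approach is to argue by contradiction, leveraging the $K$-linear structure of $K(\sqrt z)$. First I will note that since $z$ is not a square in $K$ we have $\sqrt z\notin K$, so $(1,\sqrt z)$ is a $K$-basis of the quadratic extension $K(\sqrt z)$ and every element of this field has a unique representation $a+b\sqrt z$ with $a,b\in K$. I then assume $x+y\sqrt z=(a+b\sqrt z)^2$ for some $a,b\in K$, expand to $(a^2+zb^2)+2ab\sqrt z$, and identify coordinates in the basis $(1,\sqrt z)$ to obtain the system $a^2+zb^2=x$ and $2ab=y$.

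The next step is to exploit the positivity built into the hypotheses. Since $y\ne 0$, both $a$ and $b$ must be nonzero, and because $K\subset\R$ and $z>0$ this forces $a^2+zb^2>0$, contradicting $x\le 0$. Hence $x+y\sqrt z$ is not a square in $K(\sqrt z)$, so the polynomial $T^2-(x+y\sqrt z)\in K(\sqrt z)[T]$ has no root in $K(\sqrt z)$; being of degree $2$ it is therefore irreducible, and $K(\sqrt z)(\sqrt{x+y\sqrt z})$ is a degree-two extension of $K(\sqrt z)$.

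The argument has essentially no obstacle: the hypotheses $x\le 0$, $z>0$, $K\subset\R$ are calibrated so that the first coordinate equation $a^2+zb^2=x$ is immediately incompatible with $x\le 0$ once $y\ne 0$ rules out the trivial case $b=0$. Without these sign conditions the statement genuinely fails; for instance $3+2\sqrt 2=(1+\sqrt 2)^2$ is a square in $\Q(\sqrt 2)$ with $y=2\ne 0$, the obstruction coming precisely from $x=3>0$. So the only thing worth being careful about is that one really does use the reality of $K$ together with $z>0$ to obtain the sign $a^2+zb^2>0$; everything else is bookkeeping in the basis $(1,\sqrt z)$.
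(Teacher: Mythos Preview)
Your proof is correct and follows essentially the same route as the paper: expand $(a+b\sqrt z)^2=a^2+zb^2+2ab\sqrt z$, use $y\neq 0$ to rule out the trivial case, and invoke $K\subset\R$, $z>0$ to get $a^2+zb^2>0$, contradicting $x\le 0$; then conclude irreducibility of $T^2-(x+y\sqrt z)$. The paper's version is terser but the argument is identical.
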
	
\begin{proof}
For all $a,b$ in $K$, $(a+b\sqrt{z})^2=a^2+b^2z+2ab\sqrt{z}$ where $a^2+b^2z>0$ unless $a=b=0$. Therefore, since $x\leq 0$ and $y\neq 0$,  $x+y\sqrt{z}$ cannot be a square in $K(\sqrt{z})$. It follows that $X^2-(x+y\sqrt{z})$ is irreducible over $K(\sqrt{z})$.
\end{proof}
\begin{lemma}
Let $x_0$ and $y_0$ be two positive real numbers. Assume that
\begin{enumerate}
	\item[i ] $y_0$ is not a square in $K_0=\Q$,
	\item[ii ] $x_0=a_0+b_0\sqrt{y_0}$ where  $a_0\neq 0$ and $|b_0|\geq 1$ are rational numbers,
	\item[iii ] $y_0>x_0^2$.	
\end{enumerate}
Then $M_0=(x_0,y_0)$ is $\sigma$-irrational. In particular $(-1+\sqrt{2},2)$ is $\sigma$-irrational. 
\end{lemma}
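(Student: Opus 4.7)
The plan is to argue by contradiction. Assume $M_0 = (x_0, y_0)$ is $\sigma$-rational. Since $M_0$ sits in the open first quadrant while every line $w(Oy)$ with $w \in \mathcal M_\sigma(Y)$ stays inside the set $Y$, we must have $M_0 \in w(Ox)$ for some $w \in \mathcal M_\sigma(X)$. Setting $N := |w|$ and $(x_n, y_n) := E_\sigma^n(M_0)$, Corollary~\ref{cor:euclide} then forces $y_N = 0$. I will contradict this by constructing a tower of quadratic extensions of $\mathbb Q$ showing that $y_n$ never vanishes.

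Hypothesis (iii) places $M_0 \in B$, so the algorithm opens with a $v_\sigma^{-1}$-block during which $x$ is fixed at $x_0$ and $y_n = y_0 - n x_0^2$. Since $x_0^2 = (a_0^2 + b_0^2 y_0) + 2 a_0 b_0 \sqrt{y_0}$ has a nonzero $\sqrt{y_0}$-component (because $a_0, b_0 \neq 0$), the ratio $y_0/x_0^2$ is irrational and no $y_n$ in this block can vanish. Let $k \geq 1$ be the length of the block; then $y_k = \bigl((1 - k b_0^2) y_0 - k a_0^2\bigr) + (-2 k a_0 b_0)\sqrt{y_0}$. The component of $1$ is strictly negative (using $|b_0| \geq 1$ so that $1 - k b_0^2 \leq 0$, and $a_0 \neq 0$ giving $-k a_0^2 < 0$), while the $\sqrt{y_0}$-coefficient is nonzero. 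The preceding algebraic lemma, applied with $K = \mathbb Q$ and $z = y_0$, therefore asserts that $y_k$ is not a square in $K_1 := \mathbb Q(\sqrt{y_0})$; the following $h_\sigma^{-1}$-step extends the field to $K_2 := K_1(\sqrt{y_k})$, a degree-$4$ extension of $\mathbb Q$, and crucially $y_k \neq 0$.

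The heart of the proof is an induction over the successive alternating $v_\sigma^{-1}$- and $h_\sigma^{-1}$-blocks. I maintain, at the start of each $h_\sigma^{-1}$-block, the invariant: the ambient field $K$ contains a positive non-square $z$, the current $y$ equals $z$, and the current $x$ has the form $a + b\sqrt{z}$ with $a \in K \setminus \{0\}$ and $b \in \mathbb Z$, $|b| \geq 1$. A subsequent $v_\sigma^{-1}$-block of length $m \geq 1$ transforms $y$ into $(1 - m b^2) z - m a^2 + (-2 m a b)\sqrt{z}$, whose $K$-component is strictly negative and whose $\sqrt{z}$-coefficient is nonzero by exactly the same inequalities as in the initial step. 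The algebraic lemma then yields a new non-square in $K(\sqrt{z})$, the following $h_\sigma^{-1}$-block extends the field by another degree-$2$ step, and the invariant passes to the next stage with new $a := x_{\text{current}}$ (nonzero because $a + b\sqrt{z} = 0$ would force $z = (a/b)^2 \in K$, contradicting non-squareness) and new $b := \pm (\text{length of the new $h$-block})$, an integer of absolute value at least $1$.

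The main technical obstacle is precisely this bookkeeping: tracking the tower $\mathbb Q \subset K_1 \subset K_2 \subset \cdots$ and verifying that the rational-independence forcing $a \neq 0$ persists across every switch. Granting the induction, every $y_n$ lies in a proper algebraic extension of $\mathbb Q$ and is in particular nonzero, contradicting $y_N = 0$, which establishes $M_0$ is $\sigma$-irrational. For the "in particular" claim, $(x_0, y_0) = (-1 + \sqrt 2, 2)$ satisfies (i)--(iii) with $a_0 = -1$, $b_0 = 1$, $y_0 = 2$: $2$ is not a rational square, $a_0 = -1 \neq 0$, $|b_0| = 1$, and $y_0 = 2 > 3 - 2\sqrt 2 = x_0^2$.
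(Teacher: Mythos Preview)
Your approach is essentially the paper's: both build the tower $\Q=K_0\subset K_1\subset\cdots$ with $K_{n+1}=K_n(\sqrt{y_n})$, maintain the invariant $x_n=a_n+b_n\sqrt{y_n}$ with $a_n\neq 0$ and $|b_n|\geq 1$, and invoke the preceding algebraic lemma at each step to see that the new $y$ is a non-square (its $K_n$-component is $\leq 0$ because $|b_n|\geq 1$, its $\sqrt{y_n}$-coefficient is nonzero because $a_nb_n\neq 0$). The paper phrases this as ``the accelerated sequence is never stationary'' while you phrase it as a contradiction with $y_N=0$, but the inductive content is identical.

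One labeling slip to fix: you state the invariant ``at the start of each $h_\sigma^{-1}$-block'', yet the very next sentence applies a $v_\sigma^{-1}$-block to it, and your formula $x=a+b\sqrt{z}$ with $z=$ current $y$ only holds \emph{after} an $h$-block has just rewritten $x$ in terms of $\sqrt{y_{\text{new}}}$, i.e.\ at the start of a $v$-block. With ``$h$'' replaced by ``$v$'' in that clause your bookkeeping is consistent. A second minor point: you claim $b\in\Z$, but the hypothesis only gives $b_0\in\Q$ with $|b_0|\geq 1$; from the second round on your $b=-j$ is indeed an integer, and in any case only $|b|\geq 1$ is used, so this does not affect the argument.
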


\begin{proof}
Let us proof that the sequence $M_n=(x_n,y_n)$ generated by the accelerated Euclidean algorithm starting with $M_0$, is not stationary. It will ensure that $M_0$ is $\sigma$-irrational. 
The sequence $(y_n)_n$ define a sequence of fields 
\[
K_0=\Q,\,
K_1=K_0(\sqrt{y_0}),\dots,K_n=K_{n-1}(\sqrt{y_{n-1}
}),\dots
\]
Let us proof by induction that for all $n$:
\begin{enumerate}
	\item[i(n)] $y_n\in K_n$ is positive and is not a square in $K_n$, 
	\item[ii(n)] $x_n$ is positive and $x_n=a_n+b_n\sqrt{y_n}\in K_{n+1}$ where $a_n\neq 0$ and $b_n$ are in $K_n$, and $|b_n|\geq 1$,
	\item[iii(n)] $y_n>x_n^2$.  
\end{enumerate}
If $n=0$, i(n), ii(n) and iii(n) are just the assumptions of the lemma. 

Assume i(n), ii(n) and iii(n) hold. Since $y_n>x_n^2$ there is an integer $k_{n+1}\geq 1$ such that $y_{n+1}=y_n-k_{n+1}x_n^2$ with $0\leq y_{n+1}<x_n$. Now by ii(n), 
\begin{align*}
y_{n+1}&=y_n-k_{n+1}(a_n^2+b_n^2y_n+2a_nb_n\sqrt{y_n})\\
&=-(k_{n+1}a_n^2+(k_{n+1}b_n^2-1)y_n)-2a_nb_nk_{n+1}\sqrt{y_n}\\
&=-z_n-2a_nb_nk_{n+1}\sqrt{y_n}
\end{align*}  
with $z_n>0$ and $a_nb_nk_{n+1}\neq 0$. By the previous Lemma, $y_{n+1}$ is not a square in $K_{n+1}=K(\sqrt{y_n})$. In particular $y_{n+1}$ is not zero and satisfies $0<y_{n+1}<x_n^2$. Therefore i(n+1) holds. 

Since $0<y_{n+1}<x_n^2$, there exists an integer $j_{n+1}\geq 1$ such that $x_{n+1}=x_n-j_{n+1}\sqrt{y_{n+1}}$ with $0\leq x_{n+1}<\sqrt{y_{n+1}}$.

Notice that $x_{n+1}\neq 0$ for $y_{n+1}$ is not a square in $K_n(\sqrt{y_n})$.  
Therefore, $0<x_{n+1}^2<y_{n+1}$, and iii(n+1) holds. Beside, $x_{n+1}=x_n-j_{n+1}\sqrt{y_{n+1}}$ where $x_n$ is a positive element  in $K_n(\sqrt{y_n})$,
hence $x_{n+1}=a_{n+1}+b_{n+1}\sqrt{y_{n+1}}\in K_n(\sqrt{y_n})(\sqrt{y_{n+1}})$ with $|b_{n+1}|\geq 1$ which in turn implies that ii(n+1) holds.
\end{proof}
\begin{proof}[End of proof of Theorem \ref{thm:density2}]
By the above Lemma, the orbit of point the  $M_0=(-1+\sqrt{2},2)$ under the action of $\Gamma(\sigma)$ is dense in $\R^2$. Now,
\[
h_{\sigma}\circ v_{\sigma}^2\circ h_{\sigma}^{-1}\circ v_{\sigma}^4(1,0)=M_0,
\]
hence the orbit of $(1,0)$ is dense. Next  the homeormorphism $\phi_t(x,y)=(tx,\sgn(t)t^2y)$ commute with $h_{\sigma}$ and $v_{\sigma}$, therefore $\Gamma(\sigma)(t,0)=\phi_t(\Gamma(\sigma)(1,0))$ is dense in $\R^2$ for all $t\in\R$. 
It follows that the orbit of any nonzero point in any rational line is dense. Since the orbit of the other points are also dense, we are done.
\end{proof}
\begin{remark}\label{rem:rational2}
The point $M_0$ is in the image of the $Ox$ axis by $g=h\circ v^2\circ h^{-1}\circ v^4$ which belongs to $\Gamma(\sigma)$. But $M_0$ is not in a $\sigma$-rational line, hence it provides an example where the set of rational lines we defined, is strictly included in the set of lines $g(Ox)$, $g\in\Gamma(\sigma)$ (see remark \ref{rem:rational}).
\begin{remark}
Suppose that $x$ and $y$ are two real numbers algebraically dependent over $\Q$. Then, it is not difficult to see that  $h_{\sigma}(x,y)$, $v_{\sigma}(x,y)$, $h_{\sigma}^{-1}(x,y)$ and $v_{\sigma}^{-1}(x,y)$ are still  algebraically dependent pairs over $\Q$. It follows that all the $(x,y)$ in lines the $g(Ox)$, $g\in \Gamma(\sigma)$, are algebraically dependent. This gives plenty of examples of $(x,y)$ that are not in a $\sigma$-rational line, for instance $(1,\pi)$.
\end{remark}
\end{remark}
\section{An abstract lemma about extensions of invariant measure}\label{sec:abstract}
We shall use a local formulation of Nogueira's Theorem. This local formulation can be derived from Nogueira's Theorem either by a careful examination of its proof or by using some general facts about extensions of invariant measures. Thereby, the Lemma below will allow to obtain this local formulation. 
\begin{definition} Let $(X,\mathcal A)$ be a measurable space and let $E$ be set of measurable and one to one maps from $X$ to $X$ such that for all $A\in \mathcal A$ and all $g\in E$, $gA\in\mathcal A$ (the maps $g$ are {\it bi-measurable}). A measure $\mu$ defined on the measurable subsets of a subset $B\in\mathcal A$ is $E$-invariant on $B$ if for all measurable $A\subset B$ and all $g\in E$, 
	\[
	gA\subset B\Rightarrow \mu(gA)=\mu(A).
	\]
\end{definition}

\begin{lemma}\label{lem:extension}
	Let $(X,\mathcal A)$ be a measurable space and let $h,v:X\fff X$ be two bi-measurable one to one maps such that $h(X)$ and $v(X)$ form a partition of $X$. Let $B$ be a measurable subset in $X$ such that $X$ is the union of all the $wB$, when $w$ ranges in the monoid $\mathcal M(h,v)$. 
	\begin{enumerate}
		\item If $\mu$ and $\nu$ are two $\{h,v\}$-invariant measures on $X$ that are equal on $B$ then $\mu=\nu$ on $X$.
		\item Any $\mathcal M(h,v)$-invariant measure on $B$ is the restriction of an $\mathcal M(h,v)$-invariant measure on $X$.  
		\item Suppose that $h(X\setminus B)\cap B=v(X\setminus B)\cap B=\emptyset$. Then any $\{h,v\}$-invariant measure on $B$ is the restriction of an $\mathcal M(h,v)$-invariant measure on $X$.
	\end{enumerate}
\end{lemma}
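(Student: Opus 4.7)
The plan is to organize everything around a canonical disjoint decomposition of $X$ coming from the binary-tree structure of the free monoid $\mathcal{M}(h,v)$. Since $h(X) \sqcup v(X) = X$, iteration shows that for every $n \geq 0$ the sets $\{w(X): w \in \mathcal{M}(h,v),\ |w|=n\}$ form a measurable partition of $X$, and partitions at higher $n$ refine those at lower $n$. Using the assumption $X = \bigcup_{w} wB$, for each $x \in X$ I define the minimal level $n(x)$ as the least $n$ such that $x \in wB$ for some $|w|=n$; the word $w_x$ realizing this is then unique, $b_x := w_x^{-1}(x) \in B$, and setting $C_n = \{x: n(x)=n\}$ and $B_w := w^{-1}(C_n \cap w(X)) \subset B$ for $|w|=n$, I obtain
\[
X = \bigsqcup_{n \geq 0} C_n = \bigsqcup_{n \geq 0} \bigsqcup_{|w|=n} w(B_w).
\]

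For part (1), $\{h,v\}$-invariance on $X$ iterates to $\mathcal{M}(h,v)$-invariance on $X$, so for every measurable $A \subset X$,
\[
\mu(A) = \sum_{n \geq 0}\sum_{|w|=n} \mu\bigl(w(w^{-1}(A) \cap B_w)\bigr) = \sum_{n \geq 0}\sum_{|w|=n} \mu|_B\bigl(w^{-1}(A) \cap B_w\bigr),
\]
and the same identity holds with $\nu$ replacing $\mu$. Equality on $B$ forces $\mu=\nu$ on $X$.

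For part (2), I define $\mu$ by exactly the above formula with $\mu_0$ in place of $\mu|_B$. Countable additivity is inherited termwise from $\mu_0$, and $\mu|_B = \mu_0$ follows from $C_0 = B$ together with $C_n \cap B = \emptyset$ for $n \geq 1$. The hard point is invariance under $h$ (and symmetrically $v$). Splitting the expression for $\mu(hA)$ according to whether $w$ is empty or begins with $h$ yields
\[
\mu(hA) = \mu_0(hA \cap B) + \sum_{m \geq 0}\sum_{|w'|=m} \mu_0\bigl(w'^{-1}(A) \cap B_{hw'}\bigr).
\]
A level-counting argument then shows $B_{hw'} \subset B_{w'}$ and identifies the difference $E_{w'} := B_{w'} \setminus B_{hw'}$ as $\{b \in B_{w'} : hw'b \in B\}$: indeed, for $b \in B_{w'}$ the image $hw'b$ has minimal level either $0$ (iff $hw'b \in B$) or $m+1$, any intermediate value being excluded because it would force the minimal level of $w'b$ to drop below $m$. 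With this identity in hand, one decomposes $hA \cap B = \bigsqcup_{m,w'} hw'\bigl(w'^{-1}(A) \cap E_{w'}\bigr)$ and applies $\mathcal{M}(h,v)$-invariance of $\mu_0$ on $B$ (legitimate since each image $hw'(w'^{-1}(A) \cap E_{w'})$ lies in $B$ by construction) to conclude $\mu_0(hA \cap B) = \sum_{m,w'} \mu_0(w'^{-1}(A) \cap E_{w'})$. This is exactly the compensation that collapses $\mu(hA) - \mu(A)$ to zero.

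Part (3) reduces to (2): the hypothesis $h^{-1}(B) \subset B$ and $v^{-1}(B) \subset B$ ensures that for any word $w = w_1 \cdots w_n$ and any $A \subset B$ with $wA \subset B$, each backward iterate $w_k \cdots w_n A = w_{k-1}^{-1}(w_{k-1}\cdots w_n A) \subset B$ by induction. Chaining $\{h,v\}$-invariance along this sequence upgrades it to $\mathcal{M}(h,v)$-invariance of $\mu_0$ on $B$, and part (2) applies. The main obstacle is the invariance calculation in (2): one must see that the single ``level-$0$ return'' term $\mu_0(hA \cap B)$ precisely reconciles the level-by-level mismatch between $B_{w'}$ and $B_{hw'}$; without the full $\mathcal{M}(h,v)$-invariance of $\mu_0$, this reconciliation would fail, which also explains why (3) needs its extra hypothesis.
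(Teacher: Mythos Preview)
Your proof is correct and follows essentially the same approach as the paper: both hinge on the canonical partition $X=\bigsqcup_{w}X_w$ with $X_w=wB\setminus\bigcup_{w'\text{ prefix of }w}w'B$ (your $C_{|w|}\cap w(X)$), define the extension by $\nu(A)=\sum_w\mu_0(w^{-1}(A\cap X_w))$, and verify invariance via the key observation that $gX_w\setminus X_{gw}\subset B$ (your $E_{w'}$). The only difference is organizational: the paper localizes the invariance check to a single cell $A\subset X_w$, whereas you carry out the same cancellation globally by summing over all cells at once.
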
 
\begin{remark}
The monoid is assumed to contain the identity map.
\end{remark}
\begin{proof}
	{\sc Preliminary observations.}\\
	Since $h(X)$ and $v(X)$ are disjoint the monoid $\mathcal M(h,v)$ is free and we consider elements in this monoid as words in $h$ and $v$. The empty word corresponds to the identity map.
	
	If $w$ and $w'$ are two words in the monoid, the intersection  $wX\cap w'X$ is non empty iff $w$ is a prefix of $w'$ or $w'$ is a prefix of $w$. In the first case $w'X\subset wX$ and $wX\subset w'X$ in the second case.
	
	For $w\in \mathcal M(h,v)$, set 
	\begin{align*}
	&X_w=wB\setminus \bigcup_{w' \text{ prefix of }w}w'B \\ &\text{and }
	B_w=w^{-1}X_w.
	\end{align*}
	The sets $X_w$, $w\in\mathcal M(h,v)$, form a partition of $X$. Indeed their union is $X$ by assumption, and by the above remark about prefix, they are pairwise disjoint.
	
	1. If $\mu$ and $\nu$ are two an $\{h,v\}$-invariant measures equal on $B$, since for all  $A\in\mathcal A$ and all $w\in\mathcal M(h,v)$, $w(w^{-1}(A\cap X_w))=(A\cap X_w)$, we have
	\begin{align*}
	\mu(A)&=\sum_{w\in\mathcal M(h,v)}\mu(A\cap X_w)\\
	&=\sum_{w\in\mathcal M(h,v)}\mu(w^{-1}(A\cap X_w))\\
	&=\sum_{w\in\mathcal M(h,v)}\nu(w^{-1}(A\cap X_w))\\
	&=\sum_{w\in\mathcal M(h,v)}\nu(A\cap X_w)=\nu(A).
	\end{align*} 
	
	2. Suppose now that $\mu$ is  a $\mathcal M(h,v)$-invariant measure on $B$ and define $\nu$ on $X$ by 
	\[
	\nu(A)=\sum_{w\in\mathcal M(h,v)}\mu(w^{-1}(A\cap X_w))
	\]
	for $A\in\mathcal A$. Clearly $\nu$ is a measure which coincide with $\mu$ for all $A\subset B$. Therefore to prove that $\nu$ is an invariant extension to $X$ of $\mu$, the only thing to check is the invariance:\\
	If $g=h$ or $v$ and if $A$ is a measurable subset of $X_w$ for some $w\in \mathcal M(h,v)$ then $\nu(gA)=\nu(A)$.  	
	
	Since $g$ is one to one,
	\begin{align*}
	gX_w&=gwB\setminus \bigcup_{w' \text{ prefix of }w}gw'B,\\
	&=gwB\setminus \bigcup_{w' \text{ prefix of }gw,\,w'\neq Id}w'B\\
	&=X_{gw}\cup B'
	\end{align*} 
	where $B'= gX_w\setminus X_{gw}$ is included in $B$. Since $gA\subset gX_w$, we have a partition $A=A'\cup A''$ with $gA'=B'\cap gA $ and $gA''=X_{gw}\cap gA$.	By definition of $\nu$,
	\begin{align*}
	\nu(gA)&=\nu(gA\cap X_{gw})+\nu(gA\cap B')\\
	&=\mu((gw)^{-1}(gA''))+\mu(gA')\\
	&=\mu(w^{-1}A'')+\mu(gw(w^{-1}A')).
	\end{align*}
	Since $w^{-1}A'\subset B$ and $gw(w^{-1}A')\subset B$, the invariance of $\mu$ on $B$ implies that $\mu(gw(w^{-1}A'))=\mu(w^{-1}A')$. Therefore 
	\begin{align*}
	\nu(gA)=\mu(w^{-1}A'')+\mu(w^{-1}A')
	=\mu(w^{-1}A)=\nu(A).
	\end{align*}
	3. Under the assumption $h(X\setminus B)\cap B=v(X\setminus B)\cap B=\emptyset$, the $\{h,v\}$-invariance on $B$ implies the $\mathcal M(u,v)$-invariance on $B$. So (3) is a consequence of (2).
\end{proof}
	
\section{Nogueira's theorem and its consequences}\label{sec:nogueira}
All the measures we consider are defined on Borel subsets of $\R^2$ and we
denote $\lambda$  the Lebesgue measure on $\R^2$.

For $R>0$, let $\mathcal I_{\sigma}(R)$ be the set of positive  measures $\mu$ on the square $S(R)=[0,R]^2$ such that
\begin{itemize}
\item $\mu(S(R))$ is  finite,
\item $\mu(\text{coordinate axes})=0$,
\item $\mu$ is $\mathcal M(h_{\sigma},v_{\sigma})$-invariant on $S(R)$,
\end{itemize}
When $\sigma$ is the identity we drop the index $\sigma$ and simply write $\mathcal I(R)$.
Let $\mathcal I^1(R)$ denote the set of $\mu\in \mathcal I(R)$ such that $\mu(S(R))=1$.

\begin{theorem}[Nogueira] 
	Up to a multiplicative constant, there is only one   $\{h,v\}$-invariant measure on $\R_{\geq 0}^2$ that does not charge the axes.
\end{theorem}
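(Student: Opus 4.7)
The plan is to reduce the uniqueness statement to the unique ergodicity of a one-dimensional expanding map, namely the Farey map induced by the Euclidean algorithm on a cross-section.

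First I would use the paradoxical decomposition of Section \ref{sec:paradox}: since $A = h(X)$ and $B = v(X)$ form a partition of $X$, the $\{h,v\}$-invariance of any measure $\mu$ on $\R^2_{\geq 0}$ not charging the axes immediately yields invariance under the Euclidean algorithm $E$, which acts as $h^{-1}$ on $A$ and as $v^{-1}$ on $B$. By the symmetry $(x,y) \leftrightarrow (y,x)$ it suffices to work on the open first quadrant $X_+ = \{x,y > 0\}$, and the extension Lemma \ref{lem:extension} lets me upgrade two-generator invariance into full $\mathcal M(h,v)$-invariance on any fundamental domain.

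Next, $h$ and $v$ are linear of determinant one, so they commute with the radial scaling $(x,y) \mapsto (tx,ty)$. I would disintegrate $\mu$ along the projection $\pi : X_+ \to \Sigma$ onto the open simplex $\Sigma = \{x + y = 1,\ x,y > 0\}$. On each radial fibre the $\{h,v\}$-invariance forces the conditional measure to be $dt/t$ up to a constant depending on the fibre, because the return to a given ray under the monoid $\mathcal M(h,v)$ is realised by a genuine homothety. Thus $\mu$ splits as a product of $dt/t$ along rays against some Radon measure $\bar\mu$ on $\Sigma$.

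The Euclidean algorithm $E$ descends through $\pi$ to a piecewise affine expanding map $\bar E : \Sigma \to \Sigma$ which is (a coordinate version of) the Farey map, and $\bar\mu$ is $\bar E$-invariant. The non-charging of the axes translates into $\bar\mu$ giving no mass to the endpoints of $\Sigma$, and hence to no rational point, since rationals are exactly the $\bar E$-preimages of the endpoints. The classical fact that up to scaling there is a unique $\bar E$-invariant Radon measure on $\Sigma$ with that property then pins down $\bar\mu$ and, via the disintegration, forces $\mu = c\,\lambda$.

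The main obstacle is precisely this last step: uniqueness of the invariant Radon measure for the Farey map. Uniqueness among absolutely continuous measures is classical via ergodicity of the Gauss map, but the harder content is ruling out singular invariant Radon measures supported on irrational orbits. The standard route is a bounded distortion estimate on the cylinders of the induced Gauss map, which upgrades any $\bar E$-invariant Radon measure giving zero mass to rationals into one absolutely continuous with respect to Lebesgue; once absolute continuity is secured, ergodicity of the Gauss map together with $\bar E$-invariance of the Radon--Nikodym density closes the argument. This is the step where I would expect to lean most heavily on Nogueira's original paper \cite{No2}.
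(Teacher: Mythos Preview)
The paper does not prove this theorem: it is quoted as Nogueira's result from \cite{No2} and used as a black box to obtain Corollary~\ref{cor:nogueira} and then Theorem~\ref{thm:uniqueness}. So there is no in-paper proof to compare your proposal against. The paper does, however, give a hint about Nogueira's actual argument in Section~\ref{sec:questions}: it relies on the asymptotic density of coprime pairs (Mertens' theorem), a counting approach quite different from your disintegration scheme.

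Independently of that, your sketch has a genuine gap at the disintegration step. You claim that $\{h,v\}$-invariance forces the conditional measure on each ray to be $dt/t$ ``because the return to a given ray under the monoid $\mathcal M(h,v)$ is realised by a genuine homothety.'' But an element of $\SL(2,\Z)$ sends the ray of slope $\theta$ to itself only when $\theta$ is one of its eigendirections, i.e.\ when $\theta$ is a quadratic irrational (the rational case being excluded). This is a Lebesgue-null set of slopes, and you have no a priori information on where the projected measure $\bar\mu$ is supported, so you cannot invoke such returns for $\bar\mu$-almost every fibre. What invariance actually gives you is only a cocycle relation linking $\mu_\theta$ to $\mu_{h_*\theta}$ and $\mu_{v_*\theta}$ via the linear stretch factors; this relates different fibres but does not determine any single $\mu_\theta$. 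Without that step your reduction to the Farey map collapses, since you no longer know that $\mu$ is a product of $dt/t$ against some $\bar\mu$ on $\Sigma$.

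You are right that the hard content ultimately amounts to a one-dimensional rigidity statement, and your final paragraph correctly isolates it; but the passage from two-dimensional $\{h,v\}$-invariance to that one-dimensional problem is precisely where the work lies, and it is not handled by the argument you give.
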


\begin{corollary} \label{cor:nogueira}
For all positive real number $\rho$, there is only one normalized measure on $S(\rho)=[0,\rho]^2$ that is $\{h,v\}$-invariant on $S(\rho)$, and that does not charge the axes, i.e., $\mathcal I^1(\rho)$ has only one element.
\end{corollary}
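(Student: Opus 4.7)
The plan is to extend $\mu \in \mathcal I^1(\rho)$ to an $\{h,v\}$-invariant Borel measure $\tilde\mu$ on $\R_{\geq 0}^2$ that does not charge the axes, then apply Nogueira's theorem to conclude $\tilde\mu = c\lambda$ for some constant $c > 0$, and finally normalize.

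To invoke Lemma \ref{lem:extension}(3), I would set $R = \bigcup_{w \in \mathcal M(h,v)} w(Ox)$, $X = (0,\infty)^2 \setminus R$, and $B = X \cap S(\rho)$. Three hypotheses need checking: (i) $h$ and $v$ are homeomorphisms of $(0,\infty)^2$ that preserve $R$ (since $h\,\mathcal M(h,v) \subset \mathcal M(h,v)$ and likewise for $v$), hence restrict to one-to-one bi-measurable self-maps of $X$; (ii) the diagonal $D = v(Ox)$ lies in $R$ and is therefore excluded from $X$, so the sets $h(X) = \{(x,y) \in X : y < \sigma(x)\}$ and $v(X) = \{(x,y) \in X : y > \sigma(x)\}$ form a partition of $X$; (iii) for $(x',y') \in B$ the preimages $h^{-1}(x',y') = (x' - \sigma^{-1}(y'), y')$ and $v^{-1}(x',y') = (x', y' - \sigma(x'))$ have coordinates in $[0,\rho]$, and whenever they lie in $X$ both coordinates are automatically positive, so each preimage lies in $B$; thus $h(X \setminus B) \cap B = v(X \setminus B) \cap B = \emptyset$.

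Next I would verify that $\mu$ does not charge $R \cap S(\rho)$. For each $w \in \mathcal M(h,v)$ the curve $w(Ox) \cap S(\rho)$ equals $w(A)$ for some $A \subset Ox \cap S(\rho)$, because $h$ and $v$ never decrease either coordinate and hence $w(t,0) \in S(\rho)$ forces $t \leq \rho$. The $\mathcal M(h,v)$-invariance of $\mu$ then gives $\mu(w(Ox) \cap S(\rho)) = \mu(A) \leq \mu(Ox \cap S(\rho)) = 0$, and summing the countable family yields $\mu(R \cap S(\rho)) = 0$. Consequently $\mu|_B$ is a $\{h,v\}$-invariant measure on $B$, and Lemma \ref{lem:extension}(3) produces an $\mathcal M(h,v)$-invariant extension $\tilde\mu$ to $X$. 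I would then extend $\tilde\mu$ by zero to the complement $(\text{axes}) \cup R$ in $\R_{\geq 0}^2$; because both $h$ and $v$ map this singular set into itself, the extension is $\{h,v\}$-invariant on $\R_{\geq 0}^2$ and plainly assigns zero to the axes.

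Applying Nogueira's theorem to $\tilde\mu$ forces $\tilde\mu = c\lambda$ for some $c \in (0,\infty)$; restricting to $S(\rho)$ and using $\mu(R \cap S(\rho)) = 0 = c\lambda(R \cap S(\rho))$ gives $\mu = c\lambda|_{S(\rho)}$, and the constraint $\mu(S(\rho)) = 1$ forces $c = 1/\rho^2$, whence $\mathcal I^1(\rho) = \{\rho^{-2}\lambda|_{S(\rho)}\}$. The hardest step will be arranging the strict partition hypothesis of Lemma \ref{lem:extension}: on the natural candidate $(0,\infty)^2$ the images $h(X)$ and $v(X)$ miss the diagonal $D = v(Ox)$, so one must excise the whole forward-rational family $R$ from the start and then recover $\mu(R \cap S(\rho)) = 0$ from the available monoid invariance, so that the extension-by-zero step truly yields a measure equal to $\mu$ on $S(\rho)$.
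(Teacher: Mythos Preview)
Your approach is the same as the paper's: excise the rational lines, use Lemma~\ref{lem:extension} to extend the measure to the irrational locus, and then invoke Nogueira's theorem. In fact you supply more detail than the paper in two places---the verification that $\mu(R\cap S(\rho))=0$ from the axis hypothesis, and the check of the condition $h(X\setminus B)\cap B=\emptyset$---both of which the paper leaves implicit.

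There is, however, one genuine omission. Lemma~\ref{lem:extension} has a fourth hypothesis that you do not list among your ``three hypotheses'': the covering condition $X=\bigcup_{w\in\mathcal M(h,v)}wB$. This is precisely the point where the paper invokes Lemma~\ref{lem:1/2}: every point of $X$ (i.e.\ every $\sigma$-irrational point in the open quadrant) is sent into $S(\rho)$ by a suitable iterate of the Euclidean algorithm, so $p\in w(B)$ for the corresponding $w\in\mathcal M(h,v)$. Without this step the extension lemma cannot be applied. It is an easy fix, but it should be stated.

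A minor terminological slip: $h$ and $v$ are not homeomorphisms \emph{of} $(0,\infty)^2$ (neither is surjective there); what you need, and what holds, is that they are one-to-one bi-measurable maps from $X$ into $X$, which is all the lemma requires.
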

\begin{proof}[Proof of the corollary] Let $Z$ be the set of points in $\R^2_{\geq 0}$ that are not
in a rational line. By Lemma \ref{lem:1/2}, any point in $Z$ can be map by a positive iterate of the Euclidean inside $S(\rho)$, hence $Z\subset\bigcup_{g\in\mathcal{M}(h,v)}gS(\rho)$. 
Therefore by Lemma \ref{lem:extension}, any $\{h,v\}$-invariant measure on $S(\rho)$ with total mass $1$, that does not charge the rational lines can be extended to $Z$. 
Nogueira's Theorem implies that this measure is proportional to the Lebesgue measure and so is its restriction to $S(\rho)$. 
\end{proof}

\subsection{Invariant subsets and proof of Theorem \ref{thm:uniqueness}}
We begin by to general facts which hold for any increasing odd homeomorphism $\sigma$.
\begin{lemma}
	If $A$ is a subset of $\R^2$  invariant by $h_{\sigma}$ and $v_{\sigma}$ then $A_+=A\cap\R^2_{\geq 0}$ is invariant by the generalized Euclide algorithm, i.e., $E_{\sigma}^{-1}(A_+)=A_+$.	
\end{lemma}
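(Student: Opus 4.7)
The plan is to unpack what $E_\sigma$ actually does on $\R^2_{\geq 0}$ and then verify the two inclusions $E_\sigma^{-1}(A_+)\subset A_+$ and $A_+\subset E_\sigma^{-1}(A_+)$ by direct manipulation using only the hypothesis that $h_\sigma(A)=A$ and $v_\sigma(A)=A$.

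First I would recall the definition of $E_\sigma$ restricted to $\R^2_{\geq 0}$: on the region $\{(x,y)\in \R_{\geq 0}^2:\sigma^{-1}(y)<x\}$ it coincides with $h_\sigma^{-1}$, and on $\{(x,y)\in \R_{\geq 0}^2: \sigma^{-1}(y)\geq x\}$ it coincides with $v_\sigma^{-1}$. A quick check shows that in either case the image still lies in $\R_{\geq 0}^2$, so $E_\sigma$ is a well-defined self-map of $\R_{\geq 0}^2$. This pointwise description is the only fact needed beyond the invariance hypothesis.

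For the inclusion $E_\sigma^{-1}(A_+)\subset A_+$, I take $p\in \R_{\geq 0}^2$ with $E_\sigma(p)\in A_+\subset A$. By the above description, either $p=h_\sigma(E_\sigma(p))$ or $p=v_\sigma(E_\sigma(p))$; applying the hypothesis $h_\sigma(A)=A$ or $v_\sigma(A)=A$, we get $p\in A$, and since by assumption $p\in\R_{\geq 0}^2$, we conclude $p\in A_+$. For the reverse inclusion, I take $p=(x,y)\in A_+$. Then $E_\sigma(p)$ equals either $h_\sigma^{-1}(p)$ or $v_\sigma^{-1}(p)$; the invariance of $A$ yields $h_\sigma^{-1}(A)=A$ and $v_\sigma^{-1}(A)=A$, so $E_\sigma(p)\in A$. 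Combined with the remark that $E_\sigma$ preserves $\R_{\geq 0}^2$, this gives $E_\sigma(p)\in A_+$, i.e., $p\in E_\sigma^{-1}(A_+)$.

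There is really no obstacle here; the statement is essentially tautological once one observes that on $\R_{\geq 0}^2$ the Euclidean algorithm is a piecewise inverse of one of the two generators, and that the subset $A_+$ is obtained from an invariant set by intersecting with a region that $E_\sigma$ maps to itself. A small point to mention is the behaviour on the coordinate axes, where $\sigma^{-1}(y)=0$ or $\sigma(x)=0$ forces $E_\sigma$ to act as the identity; this case causes no issue because $h_\sigma$ and $v_\sigma$ both fix the relevant axis pointwise in that degenerate situation, so the argument goes through unchanged.
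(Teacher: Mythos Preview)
Your proof is correct and follows essentially the same route as the paper's: both verify the two inclusions by using that on $\R^2_{\geq 0}$ the map $E_\sigma$ is piecewise equal to $h_\sigma^{-1}$ or $v_\sigma^{-1}$, together with the invariance $h_\sigma^{\pm1}(A)=v_\sigma^{\pm1}(A)=A$ and the fact that $E_\sigma$ (and $h_\sigma,v_\sigma$) preserve $\R^2_{\geq 0}$. Your explicit remark that $E_\sigma$ maps $\R^2_{\geq 0}$ into itself is the one observation the paper leaves implicit.
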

\begin{proof}
	On the one hand,
	\begin{align*}
	E_{\sigma}^{-1}(A_+)\subset h_{\sigma}(A_+)\cup v_{\sigma}(A_+)\subset A\cap\R^2_{\geq 0}=A_+.
	\end{align*}
	On the other hand, consider  $(x,y)\in A_+$. If $ \sigma(x)>y $, then $h_{\sigma}^{-1}(x,y)\in A_+$ and $E_{\sigma}(x,y)=h_{\sigma}^{-1}(x,y)$. Therefore $(x,y)\in E_{\sigma}^{-1}(A_+)$. If $\sigma(x)\leq y$  then $v_{\sigma}^{-1}(x,y)\in A_+$ and $E_{\sigma}(x,y)=v_{\sigma}^{-1}(x,y)$. Therefore $(x,y)\in E_{\sigma}^{-1}(A_+)$.   
\end{proof}
\begin{lemma}\label{lem:invariant}
	Let $A$ and $S$ be Borel subsets in $\R^2_{\geq 0}$. Suppose that $A$ is invariant by the generalized Euclide algorithm. Then for all measure $\mu$ on $S$, $\{h_{\sigma},v_{\sigma}\}$-invariant on $S$, that does not charge the axes, the measure $\nu=1_{A\cap S}\mu$ is also $\{h_{\sigma},v_{\sigma}\}$-invariant on $S$.
\end{lemma}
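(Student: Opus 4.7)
The plan is to unfold the definition of $\{h_{\sigma},v_{\sigma}\}$-invariance of $\nu=1_{A\cap S}\mu$ and reduce it, via the invariance of $\mu$ itself, to a set-theoretic identity comparing $A$ with its images under $h_{\sigma}$ and $v_{\sigma}$. Concretely, for $g\in\{h_{\sigma},v_{\sigma}\}$ and any Borel $A'\subset S$ with $gA'\subset S$, the desired equality $\nu(gA')=\nu(A')$ reads $\mu(gA'\cap A)=\mu(A'\cap A)$. Applying the invariance of $\mu$ on $S$ directly to $A'\cap A$ (whose $g$-image $g(A'\cap A)=gA'\cap gA$ lies inside $gA'\subset S$) already yields $\mu(gA'\cap gA)=\mu(A'\cap A)$. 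So everything reduces to the identity $\mu(gA'\cap A)=\mu(gA'\cap gA)$, that is, to showing that $A$ and $gA$ agree $\mu$-almost everywhere on $gA'$.

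To establish this I would split $\R_{\geq 0}^{2}$ into the two open regions $U=\{y<\sigma(x)\}$ and $V=\{y>\sigma(x)\}$, on which the generalized Euclidean algorithm $E_{\sigma}$ acts as $h_{\sigma}^{-1}$ and $v_{\sigma}^{-1}$ respectively. The hypothesis $E_{\sigma}^{-1}(A)=A$, intersected with $U$, gives $A\cap U=h_{\sigma}(A)\cap U$; intersected with $V$, it gives $A\cap V=v_{\sigma}(A)\cap V$. Consider the case $g=h_{\sigma}$: a short computation with the formula $h_{\sigma}(x,y)=(x+\sigma^{-1}(y),y)$ shows that $h_{\sigma}(\R_{\geq 0}^{2})\setminus U=h_{\sigma}(Oy\cap\R_{\geq 0}^{2})$, hence $h_{\sigma}(A')\setminus U\subset h_{\sigma}(A'\cap Oy)$. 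The set $A'\cap Oy$ is contained in $S$ and its $h_{\sigma}$-image is contained in $gA'\subset S$, so the invariance of $\mu$ on $S$ combined with $\mu(Oy)=0$ forces $\mu(h_{\sigma}(A'\cap Oy))=\mu(A'\cap Oy)=0$. Thus $h_{\sigma}(A')\subset U$ modulo a $\mu$-null set, and intersecting $A\cap U=h_{\sigma}(A)\cap U$ with $h_{\sigma}(A')$ delivers $\mu(h_{\sigma}(A')\cap A)=\mu(h_{\sigma}(A')\cap h_{\sigma}(A))$, as required. The case $g=v_{\sigma}$ is entirely symmetric, with $V$ and $Ox$ replacing $U$ and $Oy$.

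The main obstacle I anticipate is the bookkeeping at the boundary curve $\{y=\sigma(x)\}$: the $E_{\sigma}$-invariance of $A$ only equates $A$ with $h_{\sigma}(A)$ (resp.\ $v_{\sigma}(A)$) on the \emph{open} region $U$ (resp.\ $V$), whereas the images $h_{\sigma}(A')$ and $v_{\sigma}(A')$ naturally sit in $\overline{U}$ and $\overline{V}$. The saving observation is that the boundary part of $h_{\sigma}(A')$ is exactly the $h_{\sigma}$-image of $A'\cap Oy$, and can therefore be controlled by the local invariance of $\mu$ applied to the axis itself — it is at this step that the two standing hypotheses on $\mu$, local invariance on $S$ and vanishing on the axes, combine in an essential way.
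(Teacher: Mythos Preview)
Your proof is correct and follows essentially the same route as the paper: both reduce the invariance of $\nu$ to the set-theoretic fact that, off the $h_{\sigma}$-image of the $y$-axis (a $\mu$-null set by local invariance and the axis hypothesis), one has $A\cap h_{\sigma}(B)=h_{\sigma}(A\cap B)$, which is exactly your identity $A\cap U=h_{\sigma}(A)\cap U$ intersected with $h_{\sigma}(B)$. The paper carries this out pointwise (using $E_{\sigma}\circ h_{\sigma}=\mathrm{Id}$ on $\{x>0\}$) while you phrase it as a region-by-region set equality, but the content is the same.
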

\begin{proof}
	Let $B$ be a Borel subset of $S$ such that $h_{\sigma}(B)\subset S$. We want to prove that $\nu(h_{\sigma}(B))=\nu(B)$. It is enough to prove that $A\cap h_{\sigma}(B)=h_{\sigma}(A\cap B) \mod \mu$. 
	
	Let $(x,y)\in A\cap B$. Since $(x,y)\in A=E_{\sigma}^{-1}(A)$, $E_{\sigma}(x,y)\in A$. If $x>0$, then $E_{\sigma}(h_{\sigma}(x,y))=(x,y)$. Therefore $h_{\sigma}(x,y)\in E_{\sigma}^{-1}(A)=A$. 
	It follows that $h_{\sigma}(A\cap B)\subset (A\cap h_{\sigma}(B))\cup h_{\sigma}(\{0\}\times \R_{\geq 0})$.
	
	Conversely, if $u=h_{\sigma}(x,y)\in A\cap h_{\sigma}(B)$ with $(x,y)\in B$ and  $x>0$ then $(x,y)=E_{\sigma}(h_{\sigma}(x,y))=E_{\sigma}(u)\in A$ because $u\in A=E_{\sigma}^{-1}(A)$. Therefore $(x,y)\in A\cap B$ which implies that $u\in h_{\sigma}(A\cap B)$. It follows $(A\cap h_{\sigma}(B))\setminus h_{\sigma}(\{0\}\times \R_{\geq 0})$ is included in $h_{\sigma}(A\cap B)$.
	
	The same proof can be done with $v_{\sigma}$ instead of $h_{\sigma}$. 
\end{proof}

\begin{proof}[Proof of Theorem \ref{thm:uniqueness}]
Using the change of variables $\phi(x,y)=(ax,y)$ we can reduce to $a=1$. Indeed, 
\begin{align*}
\phi\circ h_{\sigma}\circ\phi^{-1}(x,y)
=\phi(\tfrac{1}{a}x+\sigma^{-1}(y),y)=(x+a\sigma^{-1}(y),y)
\end{align*}
which is equal to $(x+y,y)$ when $y\in[-ab,ab]$. And $\phi\circ v_{\sigma}\circ\phi^{-1}(x,y)=(x,x+y)$ for $x\in[-ab,ab]$ as well. 

Suppose that $a=1$ and let $\mu$ be a measure in $\mathcal I_{\sigma} (\R^2_{\geq 0})$.  Since $a=1$, the restriction of $\mu$ to the square $S(b)$ is in $\mathcal I(S(b))$. Therefore by the corollary of Nogueira's theorem, $\mu=c\lambda$ on $S(b)$ for some nonnegative constant $c$. Now $\mu$ and $\lambda$ are in $\mathcal I_{\sigma}(\R^2_{\geq 0})$, hence by the uniqueness part of Lemma \ref{lem:extension}, $\mu=c\lambda$ on $\R^2_{\geq 0}$. 
\end{proof}
\begin{proof}[Proof of Corollary \ref{cor:ergodic}]
Let $A\subset\R^2_{\geq 0}$ be an $E_{\sigma}$ invariant  Borel set. By Lemma \ref{lem:invariant}, the measure $\nu=1_{A}\lambda$ is in $\mathcal I_{\sigma}(\R^2_{\geq 0})$. Theorem \ref{thm:uniqueness} implies that $\nu$ is proportional to the Lebesgue measure. 
Which in turn implies $\lambda(A)=0$ or $\lambda(\R^2_{\geq 0})\setminus A)=0$.
\end{proof}

\begin{proof}[Proof of Corollary \ref{cor:exact}]
As above, using the change of variables $\phi(x,y)=(ax,y)$, we can reduce to $a=1$.
Since the Euclidean algorithm is ergodic, it is exact if and only if  for any Borel set $B$ in $\R^2_{\geq 0}$ of positive Lebesgue measure, there exists a positive integer $k$ such that $\lambda(E_{\sigma}^k(B)\cap E_{\sigma}^{k+1}(B))>0$, see Lemma 2.1 of \cite{MiNo}. 

Let $B$ be a Borel subset in $\R^2_{\geq 0}$ of positive Lebesgue measure. By Lemma $\ref{lem:1/2}$, there exist a positive integer $n$ and $A\subset B$ measurable and of positive measure such that $C=E_{\sigma}^n(A)\subset S(b)$. Restricted to $S(b)$, $E_{\sigma}=E$ which is exact. Therefore, there exists a positive integer $k$ such that $\lambda(E^k(C)\cap E^{k+1}(C))>0$. It follows that $E_{\sigma}^{n+k}(B)\cap E_{\sigma}^{n+k+1}(B)$ contains $E^k(C)\cap E^{k+1}(C)$ and is of positive Lebesgue measure.
\end{proof}

\section{$\sigma$-lines and the generalized Euclidean algorithm}\label{sec:lines}
Let $\Omega_{+}=\Omega_+(\sigma)$ be the set of all infinite words written with the two ``letters'' $h_{\sigma}$ and $v_{\sigma}$
\begin{proposition}
1. For all $w=(w_n)_{n\in\N}$ in $\Omega_+$ not equal to $h_{\sigma}^{\infty}$ or to $v_{\sigma}^{\infty}$, the set 
\[
L_w=\bigcap_{n\in\N}w_0\circ w_2\circ\dots\circ w_n(\R_{\geq 0}^2)
\]
is the graph of an increasing homeomorphism $\sigma_w:\R_{\geq 0}\fff\R_{\geq 0}$.\\
2. Every point $p$ in $\R_{> 0}^2$ is contained in a $\sigma$-line $L_w$ for some $w\in \Omega_+(\sigma)$. Furthermore, $w$ is unique if and only if $p$ is not in a $\sigma$-rational line.\\ 
3. If $p$ is in a $\sigma$-rational line, there are exactly two elements $w$ and $w'$ in $\Omega_+$ such that $p\in L_w=L_{w'}$.
Furthermore, there exists a finite length word $m$ such that
\[
w=mh_{\sigma}v_{\sigma}^{\infty}, \text{ and }
w'=mv_{\sigma}h_{\sigma}^{\infty}.
\]
\end{proposition}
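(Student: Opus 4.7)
For Part 1, I fix $w\in\Omega_+$ with $w\ne h_\sigma^\infty,v_\sigma^\infty$ and set $R_n=W_n(\R_{\geq 0}^2)$ with $W_n=w_0\circ\cdots\circ w_n$. The boundary curves $W_n(Ox)$ and $W_n(Oy)$ are $\sigma$-rational lines, and by an easy induction (using only the fact that $h_\sigma$ and $v_\sigma$ send the graph of any strictly increasing continuous homeomorphism of $[0,\infty)$ fixing the origin to another such graph), once $w_0,\ldots,w_n$ contains both letters these curves are graphs of homeomorphisms $\phi_n\le\psi_n$ with $R_n=\{(x,y):\phi_n(x)\le y\le\psi_n(x)\}$. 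Nestedness of $R_n$ yields monotone pointwise limits $\phi_\infty=\sup_n\phi_n$ and $\psi_\infty=\inf_n\psi_n$, and $L_w=\{(x,y):\phi_\infty(x)\le y\le\psi_\infty(x)\}$.

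The easy sub-case is $w$ eventually constant, say $w=w_0\cdots w_{m-1}h_\sigma^\infty$: since $h_\sigma^k(\R_{\geq 0}^2)=\{(x,y):0\le y\le\sigma(x/k)\}$ collapses onto the $x$-axis as $k\to\infty$, we get $L_w=W_{m-1}(Ox)$, a $\sigma$-rational line, and this is the graph of an increasing homeomorphism by the same inductive observation. Otherwise $w$ is not eventually constant; I claim $L_w\cap\R_{>0}^2$ is disjoint from every $\sigma$-rational line. Indeed, a $\sigma$-rational point $p=m'(a,0)\in\R_{>0}^2$ with $a>0$ satisfies $v_\sigma^{-1}(a,0)=(a,-\sigma(a))\notin\R_{\geq 0}^2$, so any backward run of the Euclidean algorithm passing through $(a,0)$ must continue as $h_\sigma^{-1}$ forever; hence $p\in L_w$ would force $w$ eventually constant. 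Combined with Proposition~\ref{prop:guido} this forces $L_w$ to have empty interior.

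The hard part is to upgrade empty interior to the identity $\phi_\infty=\psi_\infty$ on $(0,\infty)$. Because $\phi_\infty$ and $\psi_\infty$ are only left- and right-continuous respectively (they are monotone with $\phi_\infty$ LSC and $\psi_\infty$ USC), an \emph{isolated} jump at some $x_0$ could in principle produce a non-degenerate vertical segment in $L_w$ without creating a 2-dimensional interior. My plan to rule this out is to pick two distinct $\sigma$-irrational points $p\ne q$ on the hypothetical vertical segment; by the Corollary of Lemma~\ref{lem:free} (and Corollary~\ref{cor:euclide}) they carry the same infinite word $w$, so $W_n^{-1}(p)$ and $W_n^{-1}(q)$ are images by the \emph{same} element of $\Gamma(\sigma)$. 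Lemma~\ref{lem:1/2} forces both to tend to the origin while $\|p-q\|$ stays fixed, and I expect the contradiction to come by tracking the $W_n^{-1}$-images of a small rectangle transverse to the segment and invoking area-preservation of $W_n$. Once $\sigma_w:=\phi_\infty=\psi_\infty$ is established, strict monotonicity and continuity of $\sigma_w$ follow from the connectedness of $L_w$ (intersection of nested compact connected sets on bounded pieces) together with the swap symmetry $S(x,y)=(y,x)$, which conjugates $\Gamma(\sigma)$ to $\Gamma(\sigma^{-1})$ and thereby promotes the $y=\sigma_w(x)$ description to an inverse description $x=\sigma_w^{-1}(y)$ by an increasing homeomorphism.

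For Parts 2 and 3, the Corollary to Lemma~\ref{lem:free} supplies, for every $p\in\R_{>0}^2\cap X$, a unique infinite word $w$ with $p\in\bigcap_n W_n(X)\subset L_w$; this gives existence and also uniqueness whenever $p$ is $\sigma$-irrational. If instead $p$ is $\sigma$-rational, then some iterate $E_\sigma^k(p)$ lands on the positive $x$-axis, and the preceding iterate $E_\sigma^{k-1}(p)$ sits on the common-boundary curve $v_\sigma(Ox)=h_\sigma(Oy)$; from such a point the algorithm admits exactly two continuations, one via $h_\sigma^{-1}$ (landing on $Oy$ and then forced to iterate $v_\sigma^{-1}$ forever) and one via $v_\sigma^{-1}$ (landing on $Ox$ and then forced to iterate $h_\sigma^{-1}$ forever). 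These are the two expansions $m h_\sigma v_\sigma^\infty$ and $m v_\sigma h_\sigma^\infty$ with common prefix $m$ coming from the first $k-1$ steps, and the identity $h_\sigma(Oy)=v_\sigma(Ox)$ shows that both words determine the same $L_w$.
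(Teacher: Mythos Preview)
Your setup via the boundary curves $\phi_n\le\psi_n$ and the observation that $L_w$ (for $w$ not eventually constant) misses every $\sigma$-rational line, hence has empty interior by Proposition~\ref{prop:guido}, is correct and is essentially the same framework as the paper's. The gap is precisely in what you call the ``hard part'': your proposed route via Lemma~\ref{lem:1/2} and area-preservation does not close. The segment $\{t\}\times[a,b]$ has Lebesgue measure zero, so the fact that $W_n^{-1}$ preserves area tells you nothing about its image; and a ``transverse rectangle'' is not contained in $L_w$, so you have no control on where $W_n^{-1}$ sends it. The phrase ``$\|p-q\|$ stays fixed'' is true but irrelevant, since $W_n^{-1}$ is not an isometry.

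The paper's argument is both simpler and feeds directly off your own empty-interior result. Assume $\{t\}\times[a,b]\subset L_w$ with $a<b$ and $w_0=h_\sigma$ (if $w_0=v_\sigma$, first strip the initial $v_\sigma$-block; the vertical segment survives unchanged). Applying $h_\sigma^{-1}$ sends the endpoints to $(t-\sigma^{-1}(a),a)$ and $(t-\sigma^{-1}(b),b)$ in $L_{s(w)}$. Now each $R_n'=w_1\circ\cdots\circ w_n(\R_{\ge0}^2)$ is the region between the graphs of two \emph{increasing} functions $\phi_n'\le\psi_n'$; having one point of $R_n'$ at $(t-\sigma^{-1}(a),a)$ forces $\phi_n'(x)\le a$ for all $x\le t-\sigma^{-1}(a)$, and having one at $(t-\sigma^{-1}(b),b)$ forces $\psi_n'(x)\ge b$ for all $x\ge t-\sigma^{-1}(b)$. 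Hence the open rectangle $]t-\sigma^{-1}(b),t-\sigma^{-1}(a)[\times]a,b[$ lies in every $R_n'$, so $L_{s(w)}$ has nonempty interior, contradicting what you already proved (since $s(w)$ is not eventually constant either). The same shear with $v_\sigma^{-1}$ rules out horizontal segments. This replaces your unfinished area argument entirely. Parts~2 and~3 of your sketch are fine.
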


\begin{proof}
We only prove the first assertion of the proposition which is most the difficult part. Let $w=(w_n)_{n\in\N}$ be in $\Omega_+$ and not equal to $h_{\sigma}^{\infty}$ or to $v_{\sigma}^{\infty}$.
For all positive $t$,
\[
L_w\cap((t,0)+\R(0,1)) =\bigcap_{n\in\N}(w_0\circ w_1\circ\dots\circ w_n(\R_{\geq 0}^2)\cap ((t,0)+\R(0,1)))
\]
is the decreasing intersection of  the connected sets $w_0\circ w_1\circ\dots\circ w_n(\R_{\geq 0}^2)\cap ((t,0)+\R(0,1))$ which are compact provided that $w_i=h_{\sigma}$ for at least one $i\leq n$. Therefore $L_w\cap((t,0)+\R(0,1))$ is a nonempty compact  connected set. This holds for $L_w\cap((0,t)+\R(1,0))$ as well.

Let us show that the sets $L_w\cap((t,0)+\R(0,1))$  reduce to one point for all positive $t$.
Suppose on the contrary that there exists a positive $t$ and $w$ in $\Omega_+(\sigma)$ not constant such that $L_w\cap((t,0)+\R(0,1)))=\{t\}\times [a,b]$ where $0\leq a<b$. Since $w$ is not $h_{\sigma}^{\infty}$ or $v_{\sigma}^{\infty}$, the word $w$  begin either by $h_{\sigma}^{n_0}v_{\sigma}$ or by  
$v_{\sigma}^{n_0}h_{\sigma}$ for some $n_0>0$. 

If the  first letter $w_0$ is $h_{\sigma}$ then $h_{\sigma}^{-1}(L_w)=L_{w'}$ where $w'$ is obtained from $w$ by deleting the first letter. So $L_{w'}$ contains the points $(t-\sigma^{-1}(a),a)$ and $(t-\sigma^{-1}(b),b) $. It follows that the nonempty open rectangle 
\[
R=]t-\sigma^{-1}(b),t-\sigma^{-1}(a)[\times ]a,b[
\]
is included  in the interior of the sets $w_1\circ w_2\circ\dots\circ w_n(\R_{\geq 0}^2)$ for all $n\geq 1$. The boundary of $w_1\circ w_2\circ\dots\circ w_n(\R_{\geq 0}^2)$ is the union of the two $\sigma$-rational lines $w_1\circ w_2\circ\dots\circ w_n(Ox)$ and $w_1\circ w_2\circ\dots\circ w_n(Oy)$. So none of these two $\sigma$-rational lines meets $R$. 

Let $L=u_1\circ u_2\circ\dots\circ u_n(Ox)$ be a $\sigma$-rational line. Suppose that $u_i=w_i$ for $i<j\leq n$ and $u_j\neq w_j$. If $u_j=h_{\sigma}$ and $w_j=v_{\sigma}$ then  $u_1\circ u_2\circ\dots\circ u_n(Ox)$ is under $u_1\circ u_2\circ\dots\circ u_j(Oy)=w_1\circ w_2\circ\dots\circ w_j(Ox)$ which in turn is under $w_1\circ w_2\circ\dots\circ w_n(Ox)$ and so $L$ does not meet $R$. 
If $u_j=v_{\sigma}$ and $w_j=h_{\sigma}$ then $u_1\circ u_2\circ\dots\circ u_n(Ox)$ is above $u_1\circ u_2\circ\dots\circ u_j(Ox)=w_1\circ w_2\circ\dots\circ w_j(Oy)$ which in turn is above $w_1\circ w_2\circ\dots\circ w_n(Oy)$ and so $L$ does not meet $R$.

 Using the same way of reasoning with $\sigma$-rational lines $L=u_1\circ u_2\circ\dots\circ u_n(Oy)$, we see that all $\sigma$-rational lines have  empty intersections with $R$ which contradicts the density of the $\sigma$-rational lines (see Proposition \ref{prop:guido}). So the sets $L_w\cap((t,0)+\R(0,1))$  reduce to one point for all positive $t$ when the first letter of $w$ is $h_{\sigma}$.

If the first letter of $w$ is $v_{\sigma}$ then we can apply the previous case to the word $v_{\sigma}^{-n_0}w$ because $L_{v_{\sigma}^{-n_0}w}\cap((t,0)+\R(0,1))) $ still contains a vertical segment of length $b-a$. 

So in all cases we have shown that $L_w$ cannot contain nontrivial vertical segment. The proof that $L_w$ does not contains nontrivial horizontal segment goes the same way, just reverse the role of $h_{\sigma}$ and $v_{\sigma}$.

The fact that $L_w$ does not contains neither vertical segments nor horizontal segments implies that $L_w$ is the graph of a bijection $\sigma_w:\R_{\geq 0}\fff\R_{\geq 0}$. To prove that $\sigma_w$ is an homeomorphism it is enough to see that it is nondecreasing. 

Let $w$ be an infinite non constant word. For each $n$, $w_0\circ w_1\circ\dots\circ w_n(Ox)$ is the graph of an increasing function $f_n:\R_{\geq 0}\fff\R_{\geq 0}$. Furthermore, for each positive $t$, $f_n(t)$ is the infimum of $(w_0\circ w_1\circ\dots\circ w_n(\R_{\geq 0}^2)\cap ((t,0)+\R(0,1)))$.  Since
\[
\{(t,\sigma_w(t)\}=
L_w\cap((t,0)+\R(0,1)) =\bigcap_{n\in\N}(w_0\circ w_1\circ\dots\circ w_n(\R_{\geq 0}^2)\cap ((t,0)+\R(0,1)),
\]
we have $\sigma_w(t)=\lim_{n\fff\infty}f_n(t)$. It follows that $\sigma_w$ is the point-wise limit of a sequence of nondecreasing function and hence is nondecreasing.
\end{proof}

Call {\it $\sigma$-line}, the sets $L_w$, $w\in\Omega_{+}$. Observe that $Ox=L_{h_{\sigma}^{\infty}}$ and $Oy=L_{v_{\sigma}^{\infty}}$
A change of variable using the ${\sigma}$-lines  can be used to express  the action of the generalized Euclidean algorithm. 
These new variables should help to find invariant measures for the generalized Euclidean algorithm, at least in some particular cases.\\  
We need a few notations.
\begin{itemize}
	\item  Let  $s:\Omega_+\fff\Omega_+$ denote the shift map defined by $s(w_0w_1w_2\dots)=w_1w_2\dots$.
	\item Let $\phi: \R_{> 0}\times(\Omega_+\setminus\{v_{\sigma}^{\infty}\})\fff\R_{> 0}\times \R_{>0}$ denote  the ``change of variable'' defined by
	\[
	\phi(x,w)=p \text{ where } \{p\}=L_w\cap((x,0)+\R(0,1)).
	\] 
	\item Let  $H_{\sigma}:\R_{> 0}\times(\Omega_+\setminus\{v_{\sigma}^{\infty}\})\fff \R_{\geq 0}\times \Omega_+$ denote the map defined by
	\[
	H_{\sigma}(x,w)=
	\left\{
	\begin{array}[l]{lll}
	(x-\sigma^{-1}\circ\sigma_w(x),s(w)) &\text{ if }& w_0=h_{\sigma}\\
	(x,s(w)) &\text{ if }& w_0=v_{\sigma}\\
	\end{array}
	\right..
	\]
	\item Let  $Z$ denote the set of points in $\R^2_{\geq 0}$ that are not
	in a $\sigma$-rational line.
	\item Let  $\Omega_{+}^0$ denote the set of words in $\Omega_+$ whose tail are not of the form $h_{\sigma}^{\infty}$ or $v_{\sigma}^{\infty}$.
\end{itemize}
\begin{remark}
Observe that for any $w\in\Omega_+^0$ and any $x>0$,
\[
	\left\{
	\begin{array}[l]{lll}
	\sigma^{-1}_{s(w)}\circ \sigma_w(x)=x-\sigma^{-1}\circ\sigma_w(x) &\text{ if }& w_0=h_{\sigma}\\
	\sigma_{s(w)}(x)=\sigma_{w}(x)-\sigma(x) &\text{ if }& w_0=v_{\sigma}\\
	\end{array}
	\right..
	\]
\end{remark}
Standard arguments lead to
\begin{proposition}
Endow $\Omega_+^0$ with  the $\sigma$-algebra induced by the product $\sigma$-algebra on $\Omega_+$,   and then endow $\R_{> 0}\times\Omega_+^0$ with the product $\sigma$-algebra. We have
\begin{enumerate}
\item $\phi:\R_{> 0}\times\Omega_+^0\fff Z$ is bijective and bi-measurable.
\item The measurable dynamical systems $(Z,E_{\sigma})$ and $(\Omega_+^0\times \R_{> 0},H_{\sigma})$ are isomorphic:   
\[
\phi \circ H_{\sigma}(x,w)=E_{\sigma}\circ \phi(x,w)
\]
for all $(x,w)\in\R_{> 0}\times\Omega_+^0$.
\end{enumerate} 
\end{proposition}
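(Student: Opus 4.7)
The plan is to verify, in order, three distinct ingredients of the statement: (a) $\phi$ is a bijection $\R_{>0}\times\Omega_+^0\fff Z$; (b) $\phi$ and $\phi^{-1}$ are Borel measurable; (c) the conjugacy $\phi\circ H_\sigma=E_\sigma\circ\phi$. The first two items rely essentially on the structural proposition about $\sigma$-lines stated just above, while (c) is a direct case analysis using the displayed identities in the remark preceding the statement.

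For (a), fix $p\in Z$. Because $p$ is not on a $\sigma$-rational line, in particular not on the axes, both coordinates of $p$ are strictly positive. By the previous proposition there is a unique $w\in\Omega_+$ with $p\in L_w$, and moreover $w\in\Omega_+^0$: if $w$ ended in $h_\sigma^\infty$ or $v_\sigma^\infty$, then the final assertion of that proposition forces $L_w$ to be of the form $m(Ox)$ or $m(Oy)$, a $\sigma$-rational line, contradicting $p\in Z$. Writing $x$ for the first coordinate of $p$, we get $p=(x,\sigma_w(x))=\phi(x,w)$, proving surjectivity; injectivity is immediate from the uniqueness of $w$ and from the fact that $x$ is read off as the first coordinate of $\phi(x,w)$.

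For (b), since $\phi(x,w)=(x,\sigma_w(x))$, it suffices to show $(x,w)\mapsto\sigma_w(x)$ is Borel. As established in the proof of the preceding proposition, $\sigma_w(x)=\lim_{n\fff\infty}f_n(x)$, where $f_n$ is the continuous increasing function whose graph is $w_0\circ\cdots\circ w_n(Ox)$. Each $f_n(x)$ depends only on $x$ and on the prefix $(w_0,\dots,w_n)$, and is continuous in $x$, so $(x,w)\mapsto f_n(x)$ is Borel as a cylindrical function, and its pointwise limit is again Borel. For $\phi^{-1}$, observe that $A=h_\sigma(X)$ and $B=v_\sigma(X)$ partition $Z$, since their common boundary lies in the $\sigma$-rational line $h_\sigma(Oy)=v_\sigma(Ox)$ which is excluded from $Z$. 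The letter $w_n$ is then read off from $p$ via the Borel condition $w_n=h_\sigma\Leftrightarrow E_\sigma^n(p)\in A$, and the first coordinate of $\phi^{-1}(p)$ is just the first coordinate of $p$; hence $\phi^{-1}$ is Borel.

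For (c) one splits on $w_0$. If $w_0=h_\sigma$, then $\phi(x,w)=(x,\sigma_w(x))\in A$, so $E_\sigma(\phi(x,w))=(x-\sigma^{-1}(\sigma_w(x)),\sigma_w(x))$; the identity $\sigma_{s(w)}(x-\sigma^{-1}\sigma_w(x))=\sigma_w(x)$ from the preceding remark then gives $\phi(H_\sigma(x,w))=(x-\sigma^{-1}(\sigma_w(x)),\sigma_w(x))$, matching. The case $w_0=v_\sigma$ is symmetric, using $\sigma_{s(w)}(x)=\sigma_w(x)-\sigma(x)$. The main obstacle I anticipate is the bookkeeping between the excluded pieces on both sides---the $\sigma$-rational points of $\R_{\geq 0}^2$ and the eventually constant tails in $\Omega_+\setminus\Omega_+^0$---in order to ensure that the bijection is genuinely defined everywhere on $\R_{>0}\times\Omega_+^0$ and exhausts all of $Z$ without ambiguity, and in particular that $H_\sigma$ really preserves $\R_{>0}\times\Omega_+^0$ (non-vanishing of the first coordinate when $w_0=h_\sigma$ uses exactly the exclusion of the rational line $h_\sigma(Oy)$).
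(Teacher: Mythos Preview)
The paper does not actually prove this proposition; it simply prefaces the statement with ``Standard arguments lead to'' and leaves the verification to the reader. Your proposal correctly supplies those standard arguments, drawing on exactly the ingredients the paper has set up: the structural proposition on $\sigma$-lines for bijectivity, the pointwise limit $\sigma_w(x)=\lim_n f_n(x)$ from its proof for measurability of $\phi$, the coding of $E_\sigma^n$ by words for measurability of $\phi^{-1}$, and the displayed identities in the preceding remark for the conjugacy.

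One minor point of presentation: when you argue that the unique $w$ associated with $p\in Z$ must lie in $\Omega_+^0$, your appeal to ``the final assertion of that proposition'' is slightly indirect, since part~3 is stated as the implication ($p$ $\sigma$-rational $\Rightarrow$ the two codings have eventually constant tails), whereas you need the converse ($w$ eventually constant $\Rightarrow$ $L_w$ $\sigma$-rational). That converse is of course immediate---$L_{mv_\sigma h_\sigma^{\infty}}=mv_\sigma(Ox)$ and $L_{mh_\sigma v_\sigma^{\infty}}=mh_\sigma(Oy)$ from $\bigcap_k h_\sigma^k(\R_{\geq 0}^2)=Ox$ and $\bigcap_k v_\sigma^k(\R_{\geq 0}^2)=Oy$---but it would be cleaner to say so explicitly rather than to invoke part~3.
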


\section{About invariant measures for the generalized Euclidean algorithm} \label{sec:invariant}

The shift map $s:\Omega_+\fff\Omega_+$ has plenty of invariant measures. Choose one such measure $\mu$ with $\mu(\Omega_+\setminus\Omega_{+}^0)=0$. This measure does not need  to be of finite total mass. Let $\mathcal B(\R_{>0})$ be the Borel $\sigma$ algebra of $\R_{> 0}$. Consider a kernel
\begin{align*}
K:\mathcal B(\R_{>0})\times\Omega_+\fff\R_{\geq 0}\cup\{\infty\}
\end{align*}
such that
\begin{itemize}
\item for all $w\in\Omega_{+}$, $K(.,w)$ is a positive measure on $\R_{> 0}$,
\item for all $B\in\mathcal B(\R_{> 0})$, $K(B,.)$ is measurable.
\end{itemize}
If for $\mu$-almost all $w$, the image of the measure $K(.,w)$ by the map
\[
\varphi_w:x\in\R_{>0}\fff
\left\{
	\begin{array}[l]{lll}
	x-\sigma^{-1}\circ\sigma_w(x) &\text{ if }& w_0=h_{\sigma}\\
	x &\text{ if }& w_0=v_{\sigma}\\
	\end{array}
	\right.
\]
is the measure $K(s(w),.)$, then the measure $\nu$ defined on $ \R_{> 0}\times\Omega_{+}^0$ by
\[
\nu(A\times B)=\int_{B}d\mu(w)\int_{A}K(dx,w)
\]
is $H_{\sigma}$-invariant. Indeed, in that case we have
\begin{align*}
\nu(H_{\sigma}^{-1}(A\times B))&=\int d\mu(w)1_B\circ s(w)\int K(dx,w)1_A\circ\varphi_w(x)\\
&=\int d\mu(w)1_B\circ s(w)\int K(dx,s(w))1_A(x)\\
&=\int d\mu(w)1_B(w)\int K(dx,w)1_A(x)\\
&=\nu(A\times B).
\end{align*}
For a general homeomorphism $\sigma$,  the construct of such a kernel is not clear. When $\sigma(x)=\sgn(x)|x|^{\alpha}$ with $\alpha>0$, the kernel $K$ defined by
\[
K(A,w)=\int_A\frac1x \,dx
\]
satisfies the required invariance. This is a direct consequence of the following lemma.
\begin{lemma}
Suppose that $\sigma(x)=\sgn(x)|x|^{\alpha}$ for some $\alpha>0$. Then for all $w\in \Omega_{+}$, there exist a positive real number $k(w)$ such that $\varphi_w(x)=k(w)x$ for all $w\in \Omega_+\setminus\{v_{\sigma}^{\infty}\}$ and all $x>0$.
\end{lemma}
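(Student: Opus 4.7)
The plan is to exploit the homogeneity $\sigma(sx) = s^{\alpha}\sigma(x)$ for $s>0$, which is equivalent to $\sigma^{-1}(s^{\alpha}y) = s\,\sigma^{-1}(y)$. I would introduce the weighted homothety $T_{s}:\R^{2}\fff \R^{2}$ defined by $T_{s}(x,y)=(sx,s^{\alpha}y)$ and check by direct substitution that $h_{\sigma}\circ T_{s}= T_{s}\circ h_{\sigma}$ and $v_{\sigma}\circ T_{s}= T_{s}\circ v_{\sigma}$ for every $s>0$. This is essentially the content of the observation in Section~\ref{sec:context} that $h_{\sigma}$ and $v_{\sigma}$ commute with the flow $\Phi_{\alpha}$.

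Since $T_{s}$ preserves $\R_{\geq 0}^{2}$, these commutations propagate to every finite composition: $T_{s}\bigl(w_{0}\circ\cdots\circ w_{n}(\R_{\geq 0}^{2})\bigr)=w_{0}\circ\cdots\circ w_{n}(\R_{\geq 0}^{2})$ for every $w\in\Omega_{+}$ and every $n$. Intersecting over $n$ gives $T_{s}(L_{w})=L_{w}$. For $w\neq v_{\sigma}^{\infty}$, $L_{w}$ is the graph of $\sigma_{w}$, so this invariance reads $\sigma_{w}(sx)=s^{\alpha}\sigma_{w}(x)$ for all $s,x>0$. Specialising at $x=1$ yields $\sigma_{w}(x)=c_{w}\,x^{\alpha}$ with $c_{w}=\sigma_{w}(1)\geq 0$.

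Plugging this into the definition of $\varphi_{w}$ finishes the proof. If $w_{0}=v_{\sigma}$ then $\varphi_{w}(x)=x$, so $k(w)=1$. If $w_{0}=h_{\sigma}$ then $\sigma^{-1}(\sigma_{w}(x))=\sigma^{-1}(c_{w}x^{\alpha})=c_{w}^{1/\alpha}x$, so $\varphi_{w}(x)=(1-c_{w}^{1/\alpha})x$ and $k(w)=1-c_{w}^{1/\alpha}$. The geometric inclusion $L_{w}\subset h_{\sigma}(\R_{\geq 0}^{2})=\{0\leq y\leq \sigma(x)\}$ when $w_{0}=h_{\sigma}$ forces $c_{w}\leq 1$, hence $k(w)\geq 0$ (with strict positivity whenever $L_{w}$ is not the boundary curve $\{y=\sigma(x)\}$).

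There is no real obstacle here: the whole argument reduces to identifying the scaling symmetry $T_{s}$, which exists precisely because $\sigma(x)=\sgn(x)|x|^{\alpha}$ is a power function. The step one must carry out carefully is the passage from commutation of $h_{\sigma}$, $v_{\sigma}$ with $T_{s}$ to invariance of the intersection $L_{w}$, but this is immediate from the fact that each term of the intersection is individually $T_{s}$-invariant.
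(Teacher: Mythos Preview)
Your argument is correct. The paper's proof reaches the same conclusion by a slightly more hands-on route: it introduces the one-parameter family of curves $c_a(x)=(x,a\sigma(x))$, computes explicitly that $h_\sigma\circ c_a(x)=c_b((1+a^{1/\alpha})x)$ and $v_\sigma\circ c_a(x)=c_{a+1}(x)$, and then argues that the two boundary curves of each cylinder $w_0\circ\cdots\circ w_n(\R_{\geq 0}^2)$ belong to this family and converge, so that $L_w$ itself is some $c_{a(w)}$. From there the formula for $k(w)$ is read off.

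Your approach bypasses the explicit tracking of the parameter $a$: you invoke the commutation of $h_\sigma$ and $v_\sigma$ with the weighted scaling $T_s$ (equivalently, with the flow $\Phi_\alpha$ from Section~\ref{sec:context}) to conclude at once that each $L_w$ is $T_s$-invariant, hence $\sigma_w(x)=c_w x^\alpha$. This is the same homogeneity being used, but you extract only what the lemma needs and avoid the intermediate computations. The paper's longer calculation has the side benefit of giving the explicit recursion for the slope parameter under $h_\sigma$ and $v_\sigma$, which is not required for the lemma but connects naturally to the $\sigma$-continued fraction picture elsewhere in the paper.
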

\begin{proof}
Consider the family of curves $c_a:x\in \R_{> 0}\fff(x,a\sigma(x))$ with $a\geq 0$.
For any curve $c_a$, $h_{\sigma}\circ c_a(x)=c_b((1+a^{1/\alpha})x)$ with
$b=\frac{a}{(1+a^{1/\alpha})^{\alpha}}$ and $v_{\sigma}\circ c_a(x)=c_b(x)$ with $b=a+1$.
Using the previous proposition we see that for each $w\in\Omega_+\setminus\{v_{\sigma}^{\infty}\}$ the boundary of $w_0\circ w_2\circ\dots\circ w_n(\R_{\geq 0}^2)$ is the union of two curves $c_{a_n}$ and $c_{b_n}$ where $(a_n)_n$ and $(b_n)_n$ are two adjacent sequences converging to a same limit $a(w)$. Therefore 
$L_w=\{c_{a(w)}(x):x\geq 0\}$. 

If $w_0=h_{\sigma}$ then $a(w)<1$ and 
\[
E_{\sigma}(c_{a(w)}(x))=h_{\sigma}^{-1}(c_{a(w)}(x))=c_b((1-a(w)^{1/\alpha}))x)
\]
with $b=\frac{a(w)}{(1-a(w)^{1/\alpha})^{\alpha}}$.

And if $w_0=v_{\sigma}$ then $a(w)\geq 1$ and
\[
E_{\sigma}(c_{a(w)}(x))=v_{\sigma}^{-1}(c_{a(w)}(x))=c_b(x)
\]
 with $b=a(w)-1$. This implies that
 \[
 H_{\sigma}(x,w)=(k(w)x,s(w))\, \text{ with } \,
 k(w)=\left\{
  \begin{array}[l]{lll}
  1-a(w)^{1/\alpha}&\text{ if }& w_0=h_{\sigma}\\
  1&\text{ if }& w_0=v_{\sigma}
  \end{array}
  \right..
 \]
\end{proof}
Thanks to the above lemma, we see that each $s$-invariant measure gives rise to a measure invariant by the generalized Euclidean algorithm:
\begin{corollary}
Suppose that $\sigma(x)=\sgn(x)|x|^{\alpha}$ for some $\alpha>0$. Let $\mu$ be any $s$-invariant measure on $\Omega_{+}^0$. Then the measure $\nu$   defined on the product $\R_{> 0}\times\Omega_+^0$ by
\[
\nu(A\times B)=\int_{B}d\mu(w)\int_{A}\frac{dx}{x}
\]
is $H_{\sigma}$-invariant.
\end{corollary}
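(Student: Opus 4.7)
The plan is straightforward: apply the general sufficiency criterion laid out in the paragraph immediately preceding the lemma to the constant kernel $K(dx,w)=\frac{dx}{x}$.

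First I would invoke the preceding lemma, which for $\sigma(x)=\sgn(x)|x|^\alpha$ gives the key structural fact that $\varphi_w:\R_{>0}\fff\R_{>0}$ is a linear dilation, namely $\varphi_w(x)=k(w)x$ with $k(w)=1-a(w)^{1/\alpha}\in (0,1)$ if $w_0=h_\sigma$ and $k(w)=1$ if $w_0=v_\sigma$. Thus $\varphi_w$ is just multiplication by a positive constant depending on $w$; in particular, it belongs to the multiplicative group $(\R_{>0},\times)$.

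Next I would observe that the measure $\frac{dx}{x}$ is the Haar measure of $(\R_{>0},\times)$, and is therefore invariant under every multiplicative translation. A one-line change of variable verifies this directly: for any Borel set $A\subset\R_{>0}$,
\[
\int_{\varphi_w^{-1}(A)}\frac{dx}{x}=\int_{k(w)^{-1}A}\frac{dx}{x}=\int_{A}\frac{dy}{y},
\]
using $y=k(w)x$. Consequently the pushforward $(\varphi_w)_*K(\cdot,w)$ equals $K(\cdot,s(w))$ for every $w$ (both sides being the Haar measure $\frac{dx}{x}$, and $w\mapsto K(\cdot,w)$ is trivially measurable since it does not depend on $w$).

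Finally, the measure $\mu$ is $s$-invariant by hypothesis, so the sufficient condition for $H_\sigma$-invariance displayed in the paragraph preceding the lemma applies verbatim, and $\nu$ defined by the product formula is $H_\sigma$-invariant. There is no real obstacle: the whole point of the preceding lemma is that it reduces the problem of finding an invariant kernel for $H_\sigma$ in the $\sgn(x)|x|^\alpha$ case to finding a measure on $\R_{>0}$ invariant under all multiplicative translations, and $\frac{dx}{x}$ does the job.
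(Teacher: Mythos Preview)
Your proposal is correct and follows exactly the route the paper intends: the corollary is stated as an immediate consequence of the preceding lemma together with the general kernel criterion, and your argument spells out precisely that deduction (linearity of $\varphi_w$ plus Haar invariance of $\tfrac{dx}{x}$). There is nothing to add.
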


\section{ The affine group associated with $\Gamma(\sigma)$ when $\sigma(x+1)=\sigma(x)+1$ }\label{sec:kazdhan}
In this section, we suppose that $\sigma(x+1)=\sigma(x)+1$ for all $x\in\R$.
Clearly,
\begin{align*}
&h_{\sigma}((x,y)+(n,m))=h_{\sigma}(x,y)+h(m,n),\\
&v_{\sigma}((x,y)+(m,n))=v_{\sigma}(x,y)+v(m,n)
\end{align*} 
for all $(m,n)\in
\mathbb{Z}^{2}$,  so that $\mathbb{Z}^{2}$ is stable by $\Gamma
(\sigma)$ and the restriction map  $\pi:g\rightarrow g|_{\mathbb{Z}^{2}}$
is a surjective morphism from $\Gamma(\sigma)$ onto  $\SL(2,\mathbb{Z)}$. This allows to extend $\Gamma(\sigma)$ with the $\Z^2$-translations: let $A\Gamma(\sigma)$ the group of all the maps of the form
\[p\in\R^2\fff\gamma(p)+u\] where $\gamma\in\Gamma(\sigma)$ and $u\in\Z^2$. The group $A\Gamma(\sigma)$ is the semi-direct product of $\Gamma(\sigma)$ with the additive group
 $(\mathbb{Z}^{2},+)$. It is known that $(A\Gamma(Id),\mathbb{Z}^{2})$ has the relative
Kazdhan property \cite{Sh}. By Corollary 2
in \cite{CoTe},  $(A\Gamma(\sigma),\mathbb{Z}^{2})$ has also the relative Kazdhan property.  A direct proof is also possible by adapting Shalom's proof.

When  $n\geq3$ there are similar result:   $\mathbb{Z}^{n}$ is stable under the action of
$\Gamma(n,\sigma)$, the restriction $\pi:g\rightarrow g|_{\mathbb{Z}^{n}}$ is a
surjective morphism from $\Gamma(n,\sigma)$ on $\SL(n,\mathbb{Z)}$. As above, let denote
 $A\Gamma(n,\sigma)$  the semi-direct product of $\Gamma(n,\sigma)$ with the additive group
 $(\mathbb{Z}^{n},+)$.  The couple $(A\Gamma(n,\sigma),\mathbb{Z}^{n})$ has the relative Kazdhan property.

\section{Invariant measures in the two-dimensional torus}\label{sec:torus}
The generalized transvections can be defined in the two-dimensional torus $\T^2$. Let $\sigma_1,\sigma_2:\T=\R/\Z\fff\T$ be two measurable maps and define $h,v:\T^2\fff\T^2$ by
\begin{align*}
h(x,y)&=(x+\sigma_2(y),y),\\
v(x,y)&=(x,y+\sigma_1(x)).
\end{align*} 
The two maps $h,v$ are invertible maps acting on $\T^2$ and preserve the Lebesgue measure $\lambda_{\T^2}$ of the two-dimensional torus.
Call $\Gamma=\Gamma(\sigma_1,\sigma_2)$ the group generated by 
$h$ and $v$. Some results about invariant measures are easy in this setting. Denote $\lambda_{\T^d}$ the normalized Lebesgue measure on the torus $\T^d$.
\begin{proposition}\label{prop:torus}
For $i=1,2$, let $N_i=\sigma_i^{-1}(\Q/\Z)$.
\begin{enumerate}
\item If $\lambda_{\T}(\sigma_i^{-1}(\Q/\Z))=0$, $i=1,2$, then  any finite measure $\mu$ that is $\Gamma$-invariant and absolutely continuous  
with respect to the Lebesgue measure $\lambda_{\T^2}$, is proportional to $\lambda_{\T^2}$.  
\item If the sets $N_i=\sigma_i^{-1}(\Q/\Z)$, $i=1,2$, are countable, then any finite measure $\mu$ that is $\Gamma$-invariant and has  no atom, is proportional to $\lambda_{\T^2}$.	
\end{enumerate}
\end{proposition}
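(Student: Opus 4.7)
The plan is to use Fourier analysis on $\T^2$. Writing $\hat\mu(m,n)=\int e^{-2\pi i(mx+ny)}\,d\mu(x,y)$, it suffices to show $\hat\mu(m,n)=0$ for every $(m,n)\ne(0,0)$, since this forces $\mu=\hat\mu(0,0)\,\lambda_{\T^2}$.

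For each $m\in\Z$ I would introduce the complex measure $\nu_m$ on $\T$ defined by $\int\phi\,d\nu_m=\int e^{-2\pi imx}\phi(y)\,d\mu(x,y)$, so that $\hat\nu_m(n)=\hat\mu(m,n)$. Applying the $h$-invariance $\int g\circ h\,d\mu=\int g\,d\mu$ to $g(x,y)=e^{-2\pi imx}\phi(y)$ yields
\[
\int\bigl(e^{-2\pi im\sigma_2(y)}-1\bigr)\phi(y)\,d\nu_m(y)=0
\]
for every continuous $\phi$, hence $(e^{-2\pi im\sigma_2(y)}-1)\,d\nu_m=0$ as complex measures. Therefore $|\nu_m|$ is carried by $\{y:m\sigma_2(y)\in\Z\}\subseteq N_2$ whenever $m\ne 0$. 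Symmetrically, $v$-invariance produces, for $n\ne 0$, a complex measure $\nu'_n$ on $\T$ with $\widehat{\nu'_n}(m)=\hat\mu(m,n)$ and $|\nu'_n|$ carried by $N_1$.

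For part (1), the assumption $\mu\ll\lambda_{\T^2}$ implies that the $y$-marginal $\mu_Y$ of $\mu$ is absolutely continuous with respect to $\lambda_\T$; since $|\nu_m|\le\mu_Y$, so is $|\nu_m|$. Being supported on the $\lambda_\T$-null set $N_2$, it vanishes. Hence $\hat\mu(m,n)=0$ for $m\ne 0$, and by symmetry for $n\ne 0$, giving $\mu=\hat\mu(0,0)\lambda_{\T^2}$.

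For part (2) the main obstacle is to rule out concentrations of $\mu$ on horizontal or vertical circles. Suppose $c:=\mu(\T\times\{y_0\})>0$ for some $y_0$, and consider the iterates
\[
\Gamma_n:=v^n(\T\times\{y_0\})=\{(x,\,y_0+n\sigma_1(x)):x\in\T\},\quad n\in\N.
\]
By $v$-invariance, $\mu(\Gamma_n)=c$ for all $n$. For $n\ne n'$, a point in $\Gamma_n\cap\Gamma_{n'}$ has $x$-coordinate satisfying $(n-n')\sigma_1(x)\in\Z$, i.e.\ $x\in N_1$; since $N_1$ is countable this intersection is a countable subset of $\T^2$, hence of $\mu$-measure zero because $\mu$ has no atoms. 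Disjointifying the $\Gamma_n$'s produces pairwise disjoint sets each of $\mu$-measure $c$, forcing $\mu(\T^2)=\infty$, a contradiction. Thus $\mu(\T\times\{y_0\})=0$ for every $y_0$, and symmetrically $\mu(\{x_0\}\times\T)=0$ for every $x_0$. Consequently $\mu_Y$ is non-atomic; since $|\nu_m|\le\mu_Y$, $|\nu_m|$ is non-atomic, and being supported on the countable set $N_2$ for $m\ne 0$ it must vanish. The same conclusion for $\nu'_n$ with $n\ne 0$ completes the proof.
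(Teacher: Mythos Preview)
Your argument is correct. The paper takes a different route: after establishing $\mu(N_1\times\T)=\mu(\T\times N_2)=0$ (in case~(2) by an $h$-orbit argument on the vertical slices $\{r\}\times(\T\setminus N_2)$ rather than your $v$-orbit argument on $\T\times\{y_0\}$, though the two are entirely analogous), it applies Birkhoff's ergodic theorem together with the unique ergodicity of irrational rotations to compute $\int\varphi_1\otimes\varphi_2\,d\mu$, obtaining
\[
\int_{\T^2}\varphi_1\otimes\varphi_2\,d\mu=\int_\T\varphi_1\,d\lambda_\T\int_\T\varphi_2\,d\mu_2
\]
and its symmetric counterpart; testing with $\varphi_i=1$ then forces $\mu_1=\mu_2=\lambda_\T$ and $\mu=\lambda_{\T^2}$. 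Your Fourier approach is more elementary in that it bypasses the ergodic theorem: once the support constraint $(e^{-2\pi i m\sigma_2}-1)\,d\nu_m=0$ is in hand, the vanishing of $\hat\mu(m,n)$ follows from pure measure theory. The ergodic-theoretic proof, on the other hand, makes the underlying dynamical mechanism (time averages along $h$-orbits equal the $x$-integral against $\lambda_\T$) explicit, and would transfer more readily to group actions where characters are not available.
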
 
\begin{corollary}
If $\lambda_{\T}(\sigma_i^{-1}(\Q/\Z))=0$, $i=1,2$, then the action of $\Gamma$ on $\T^2$ is ergodic with respect to the Leesgue measure.  
\end{corollary}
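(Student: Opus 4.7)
The plan is to derive the corollary directly from part (1) of Proposition \ref{prop:torus} by the standard trick of turning an invariant set into an invariant measure absolutely continuous with respect to Lebesgue. So I would start with an arbitrary $\Gamma$-invariant Borel set $A\subset\T^2$, assume $\lambda_{\T^2}(A)>0$, and aim to show $\lambda_{\T^2}(A)=1$.

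The key construction is the measure $\mu$ defined by $\mu(B)=\lambda_{\T^2}(A\cap B)$ for every Borel set $B\subset\T^2$, i.e.\ $\mu=1_A\,\lambda_{\T^2}$. I would first check that $\mu$ satisfies the hypotheses of Proposition \ref{prop:torus}(1): it is finite (since $\mu(\T^2)=\lambda_{\T^2}(A)\leq 1$) and visibly absolutely continuous with respect to $\lambda_{\T^2}$. For $\Gamma$-invariance, using that $h$ and $v$ preserve $\lambda_{\T^2}$ and that $A$ is $\Gamma$-invariant (so $h^{-1}(A)=A$ mod null and likewise for $v$), I would compute
\[
\mu(h^{-1}(B))=\lambda_{\T^2}(A\cap h^{-1}(B))=\lambda_{\T^2}(h^{-1}(A\cap B))=\lambda_{\T^2}(A\cap B)=\mu(B),
\]
and the analogous identity for $v$; since $h$ and $v$ generate $\Gamma$, this suffices for $\Gamma$-invariance of $\mu$.

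Applying Proposition \ref{prop:torus}(1) then gives $\mu=c\,\lambda_{\T^2}$ for some constant $c\geq 0$, which means $1_A=c$ almost everywhere. Since $1_A$ takes only the values $0$ and $1$, we must have $c\in\{0,1\}$; the assumption $\lambda_{\T^2}(A)>0$ rules out $c=0$, so $c=1$ and thus $\lambda_{\T^2}(A)=1$. This is exactly the ergodicity of $\Gamma$ with respect to $\lambda_{\T^2}$.

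There is essentially no obstacle here, as all the work is concentrated in Proposition \ref{prop:torus}(1); the only minor point requiring care is verifying that $\Gamma$-invariance of a set in the measure-theoretic sense (invariance modulo $\lambda_{\T^2}$-null sets) transfers correctly to $\Gamma$-invariance of the measure $1_A\,\lambda_{\T^2}$, and this is immediate once one uses that $h,v$ preserve $\lambda_{\T^2}$.
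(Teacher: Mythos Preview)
Your proof is correct and is exactly the standard argument the paper intends: although the paper does not spell out a separate proof for this corollary, it uses the identical trick (form $1_A\,\lambda$, apply the uniqueness statement, conclude $\lambda(A)\in\{0,1\}$) in the proof of Corollary~\ref{cor:ergodic}. There is nothing to add.
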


\begin{proof}[Proof of the proposition] Let $\mu$ be a $\Gamma$-invariant measure with total mass $1$. If $\mu$ is absolutely continuous with respect to the Lebesgue measure and $\lambda_{\T}(\sigma_i^{-1}(\Q/\Z))=0$, $i=1,2$, then  $\mu(N_1\times\T)=\mu(\T\times N_2)=0$. The same result holds if $\mu$ has no atom and the sets $N_i$ are countable. Indeed, 
let $r$ be in $\T$ and consider the set $E_r=\{(r,y):y\in\T\setminus N_2\}$. Since $\sigma_2(y)$ is irrational for all $y\notin N_2$, the sets $h^n(E_r)$, $n\in \Z$ are pairwise  disjoint and have the same $\mu$-measure, hence they all have zero measure. Since $\mu$ has no atom $F_r=\{(r,y):y\in\T\}$ has also zero measure which implies that $N_1\times \T$ has zero measure. The same proof shows that $\T\times N_2$ has zero measure.

Let $\varphi_1,\varphi_2:\T\fff\R$ be two continuous functions. Making use successively of the Birkhoff Theorem, of the zero measure of $\T\times N_2$ and of the unique ergodicity of irrational translations, we obtain
\begin{align*}
\int_{\T^2} \varphi_1\otimes\varphi_2(x,y) d\mu(x,y)&=\int_{\T^2}\lim_{n\fff\infty}\frac1n \sum_{k=0}^{n-1}(\varphi_1\otimes\varphi_2)\circ h^k(x,y) d\mu(x,y)\\
&=\int_{\T^2\setminus\T\times N_2}\lim_{n\fff\infty}\frac1n \sum_{k=0}^{n-1}(\varphi_1\otimes\varphi_2)\circ h^k(x,y) d\mu(x,y)\\
&=\int_{\T}\varphi_1d\lambda\int_{\T^2\setminus\T\times N_2}1_{\T}(x)\otimes\varphi_2(y) d\mu(x,y)\\
&=\int_{\T}\varphi_1d\lambda_{\T^1}\int_{\T}\varphi_2 d\mu_2
\end{align*}
where $\mu_2$ is the image of $\mu$ by the projection $(x,y)\in\T^2\fff y\in\T$.
	
Reversing the role of $x$ and $y$ we obtain,
\begin{align*}
\int_{\T^2} \varphi_1\otimes\varphi_2(x,y) d\mu(x,y)
&=\int_{\T}\varphi_2d\lambda_{\T^1}\int_{\T}\varphi_1 d\mu_1
\end{align*}
where $\mu_1$ is the image of $\mu$ by the projection $(x,y)\in\T^2\fff x\in\T$. Using the functions $1\otimes\varphi_2$ and $\varphi_1\otimes 1$,  it follows that $\lambda=\mu_1=\mu_2$ and then that $\mu=\lambda_{\T^1}\otimes\lambda_{\T^1}=\lambda_{\T^2}$.
\end{proof}

\section{Miscellaneous questions and remarks}\label{sec:questions}
\subsection{Density}
Suppose that $\sigma:\R\fff\R$ is given by $\sigma(x)=\sgn(x)|x|^{\alpha}$ for a positive $\alpha\neq 1$. Is the orbit of any nonzero point under the action of $\Gamma(\sigma)$, dense in in the plan? Start with the case $\alpha=3$.  (Recall that $\Gamma(\sigma)$ is the group generated by the two generalized tarnsvections)
\subsection{Ergodicity and invariant measures}
Suppose that $\sigma:\R\fff\R$ is an increasing odd homeomorphism.\\
Does the action of $\Gamma(\sigma)$ on the plan equipped with the Lebesgue measure always ergodic?\\
Same question for the generalized Euclidean algorithm acting on the first quadrant? \\
What are the $h_{\sigma},v_{\sigma}$-invariant measures on the first quadrant?\\ 
In the authors opinion the differentiability at the origin and the cancellation of the derivative should play a role in these three questions.  

Find ``many'' invariant measures absolutely continuous with respect to the Lebesgue measure, for the generalized Euclidean algorithm.

\subsection{Arithmetic result for the generalized transvections}
The proof of Nogueira's Theorem involved the following standard arithmetical result:
\[
\lim_{n\rightarrow\infty}\frac{1}{n^2}\operatorname{card}\{(p,q)\in\N^2:p,q\leq n \text{ and } \gcd(p,q)=1\}=\frac{6}{\pi^2}
\]
(Mertens Theorem \cite{Me}, Wikipedia: Cesàro Theorem).
Numerical simulations indicate that this Theorem could still hold in the context of generalized transvections. More precisely:
For $r>0$, consider the set $\Lambda(r)=\{w(r,r)\in [0,1]^2: w\in \mathcal M(h_{\sigma},v_{\sigma})\}$ (recall that $\mathcal M(h_{\sigma},v_{\sigma})$ is the monoid generated by $h_{\sigma}$ and $v_{\sigma}$). Study the limit 
\[
\lim_{r\fff 0}r^2\operatorname {card}\Lambda(r)
\]
Numerical simulations for $\sigma$ tangent to the identity at the origin, give a limit equal to $\frac{6}{\pi^2}$ as in Mertens' Theorem. For $\sigma(x)=\sgn(x)|x|^{\alpha}$, the limit seems to exist and be positive.  

Guido Ahumada et Nicolas Chevallier\\
Université de Haute Alsace\\
4, rue des frères Lumière,\\
68093 Mulhouse France

\end{document}